\documentclass[11pt]{article}

\usepackage[margin=1in]{geometry}
\usepackage{amsmath,amsfonts,amsthm,amssymb}

\usepackage{cite}
\usepackage{subcaption}
\usepackage{graphicx}
\graphicspath{{figures/}}
\usepackage{color}
\usepackage{url}
\usepackage{soul}
\usepackage{enumitem}

\usepackage{todonotes}

\usepackage{verbatim}%for large commenting
%\usepackage{hyperref}

%%%%%%%%%%%%%%%

\newcommand{\bbR}{\mathbb{R}}

\newcommand{\Rn}{\mathbb{R}^n}

\newcommand{\Zn}{\mathbb{Z}^n}

%Operators

%\newcommand{\intu}{\int_{\Omega\cup\Omega_{\CI}}}

%%%%%%%%%%%%%%%
\newcommand{\Ldel}{L^{\delta,\beta}}
\newcommand{\udel}{u^{\delta,\beta}}
\newcommand{\cdel}{c^{\delta,\beta}}
\newcommand{\mdel}{m^{\delta,\beta}}

\newtheorem{theorem}{Theorem}

\newtheorem{cor}{Corollary}
\newtheorem{lem}{Lemma}

\newtheorem{prop}{Proposition}

\theoremstyle{remark}
\newtheorem{remark}{Remark}

\title{Regularity of Solutions for the Nonlocal Diffusion Equation on Periodic Distributions}
\author{Ilyas Mustapha, Bacim Alali, and Nathan Albin\\
\
\footnotesize{Department of Mathematics, Kansas State University, Manhattan, KS}}
%\thanks{Research and preparation of the paper was partially supported by .}

\begin{document}
%\date{}
\maketitle

\begin{abstract}
    This work addresses  the regularity of solutions for a nonlocal diffusion equation over the space of periodic distributions. The spatial operator for the nonlocal diffusion equation is  given by a nonlocal Laplace operator with a compactly supported integral kernel. We follow a unified approach based on the Fourier multipliers of the nonlocal Laplace operator, which allows 
    %for studying 
    the study of regular as well as distributional solutions of the nonlocal diffusion equation, integrable as well as singular kernels, in any spatial dimension. In addition, the results extend beyond operators with singular kernels to  nonlocal super-diffusion operators. We present results on the spatial and temporal regularity of solutions in terms of regularity of the initial data or the diffusion source term. Moreover, solutions of the nonlocal diffusion equation are shown to converge to the solution of the classical diffusion equation for two types of limits: as the spatial nonlocality vanishes or as the singularity of the integral kernel approaches a certain critical  singularity that depends on the spatial dimension. Furthermore, 
    we show that, for the case of integrable kernels, discontinuities in the initial data propagate and persist in the solution of the nonlocal diffusion equation. The magnitude of a jump discontinuity is shown to decay overtime.
\end{abstract}
\begin{comment}
\begin{abstract}
\noindent This work addresses  the regularity of solutions for the nonlocal diffusion equation given by 
\begin{eqnarray}
\begin{cases}
    u_t(x,t)=L u(x,t)+b(x) ,~~x\in T^n,~t>0\\
    u(x,0) = f(x)
\end{cases},\label{nonlocal_diffusion_eqn}
\end{eqnarray}
over the space of periodic distributions. Here  $T^n$ denotes the periodic torus in $\Rn$ and $L$ is a nonlocal Laplace operator with compactly supported integral kernel.

We first show the existence and uniqueness of solution to this equation with or without diffusion source. Then, we carefully study the spatial regularity of the solutions when $u$ is a periodic distribution. 
%by using the asymptotic properties of the Fourier multipliers of the operator $\Ldel$.
Moreover, we use the principle of Gateaux derivatives to study the temporal regularity of $u$ when $u$ is both a periodic distribution and a regular function.
Furthermore, we describe the limiting behavior of the solutions and show that the solution converges to the solution of the classical diffusion equation as $\delta\to 0^+$ and as $\beta\to n+2$. Finally, we study the propagation of discontinuities for the nonlocal diffusion equation. We show that if $f$ is discontinuous, then for certain conditions on $f$ and $\beta$, discontinuities persist and propagate. 
\end{abstract}

\end{comment}

\noindent \textit{Keywords:} Nonlocal diffusion equations, nonlocal Laplace operators,  nonlocal superdiffusion, Fourier multipliers, spatial regularity, temporal regularity.

%\listoftodos

\section{Introduction}
%In recent years, nonlocal models have been applied in various areas such as image processing, image correlation, diffusion conduction, fluid mechanics, see \cite{gilboa2009nonlocal,silling2007peridynamic,hu2012peridynamic, bobaru2015cracks,bucur2016nonlocal,cheng2015peridynamic}. For example, the work in \cite{bobaru2010peridynamic} proposes a peridynamic model for transient heat transfer which is valid when the body undergoes damage or evolving cracks. 
In this work, we study  the regularity of solutions to the nonlocal diffusion equation given by 
%The nonlocal diffusion equation can be obtained by replacing the standard Laplace operator in the classical diffusion equation with a nonlocal Laplace operator. 
%This work addresses  the regularity of solutions for the nonlocal diffusion equation given by 
\begin{eqnarray}
\begin{cases}
    u_t(x,t)=\Ldel u(x,t)+b(x) ,~~x\in T^n,~t>0,\\
    u(x,0) = f(x),
\end{cases}\label{nonlocal_diffusion_eqn}
\end{eqnarray}
over the space of periodic distributions $H^s(T^n)$, with $s\in\mathbb{R}$. Here  
 $T^n$ denotes the periodic torus in $\Rn$ and $\Ldel$ is a nonlocal Laplace operator %with compactly supported integral kernel
defined by
%The nonlocal Laplace operator is defined by
\begin{eqnarray}
    \Ldel u(x)=\cdel\int_{B_\delta(x)}\frac{u(y)-u(x)}{\|y-x\|^\beta}dy,~~x\in\Rn, \label{nonlocal_operator}
\end{eqnarray}
where $B_\delta(x)$ denotes a ball in $\Rn$, $\delta>0$ is called the horizon or the nonlocality, and the kernel exponent $\beta$ satisfies $\beta<n+2$\cite{du2012analysis,nonlocal_calc_2013}. The scaling constant $c^{\delta,\beta}$ is given by 
\begin{eqnarray*}
    \cdel = \frac{2(n+2-\beta)\Gamma(\frac{n}{2}+1)}{\pi^{\frac{n}{2}}\delta^{n+2-\beta}}.
\end{eqnarray*}
Nonlocal integral operators with compact support of the form \eqref{nonlocal_operator} have their roots in  peridynamics \cite{silling2000reformulation, silling2007peridynamic} and have been introduced in nonlocal vector Calculus \cite{nonlocal_calc_2013}.
These nonlocal operators have been used in different applied settings, see for example \cite{gilboa2009nonlocal,hu2012peridynamic, bobaru2015cracks,bucur2016nonlocal,cheng2015peridynamic}. 
%image processing, heat conduction, fluid mechanics, see \cite{gilboa2009nonlocal,silling2007peridynamic,hu2012peridynamic, bobaru2015cracks,bucur2016nonlocal,cheng2015peridynamic}.
%Nonlocal operators of the form \eqref{nonlocal_operator} have been used in different applied settings, which include 
%image processing \cite{gilboa2009nonlocal}, dynamic fracture \cite{hu2012peridynamic}, among others. 
%image processing, heat conduction, fluid mechanics, see \cite{gilboa2009nonlocal,silling2007peridynamic,hu2012peridynamic, bobaru2015cracks,bucur2016nonlocal,cheng2015peridynamic}. 
The work in \cite{bobaru2010peridynamic} proposed a nonlocal model for transient heat transfer, which is valid when the body undergoes damage or evolving cracks. There have been many mathematical analysis studies involving nonlocal Laplace operators and peridynamic operators including the works \cite{MengeshaDu,kassmann2019solvability,foss2016differentiability,mengesha2020solvability,du2012analysis,alali2021fourier}. In general, exact solutions are not readily available for nonlocal models, however, different computational techniques and numerical analysis methods have been developed for solving nonlocal equations such as  \cite{d2020numerical,du2016asymptotically,borm2010efficient,du2018numerical,jafarzadeh2020efficient,coclite2020numerical,du2017fast,FossRaduYu,Kun2010,alali2020fourier}. 
%Moreover, the integrability and differentiability properties for solutions of a nonlocal system are introduced in \cite{foss2016differentiability}.

%\vspace{7pt}
%Fourier spectral approximations for nonlocal equations are presented in \cite{alali2020fourier,du2017fast,du2016asymptotically}. 
%The work in \cite{mengesha2020solvability} prove existence and uniqueness of strong (pointwise) solutions to a linear nonlocal strongly coupled hyperbolic system of equations posed on all of Euclidean space.

 The work in \cite{alali2021fourier} introduces the Fourier multipliers for nonlocal Laplace operators, studies the asymptotic behavior of these multipliers, and then applies the asymptotic analysis in the periodic setting to prove  regularity results for the nonlocal Poisson equation. In this work, we apply the Fourier multipliers approach developed in \cite{alali2021fourier} to study the regularity of solutions to the nonlocal diffusion equation over the space of periodic distributions. 
 The organization of this article and a brief description of the main contributions of this study are as
follows.
\begin{itemize}[leftmargin=*,itemsep=.005cm]
\item A review of the Fourier multipliers analysis for the nonlocal Laplace operator \eqref{nonlocal_operator} is provided in Section~\ref{sec:multipliers}.
\begin{comment}
    \item In Section~\ref{sec:multipliers}, we recall the definition of Fourier multipliers of the nonlocal Laplace operator $\Ldel$. These multipliers, according to \cite{alali2021fourier}, are given by
\begin{eqnarray}
    \mdel(\nu)=\cdel\int_{B_\delta(0)}\frac{\cos(\nu\cdot z)-1}{\|z\|^\beta}dz\label{multipliers}
\end{eqnarray}
    for $\beta<n+2$. This integral representation  can also be recognized in terms of a generalized  hypergeometric function %$_2F_3$ 
    for any dimension $n$ and any $\beta<n+2$
\begin{eqnarray*}
    \mdel(\nu)=-\|\nu\|^2 ~_2F_3\left(1,\frac{n+2-\beta}{2};2,\frac{n+2}{2},\frac{n+4-\beta}{2};-\frac{1}{4}\|\nu\|^2\delta^2\right).
\end{eqnarray*}
    \item Theorem~\ref{thm:asymptotic_behavior} uses the asymptotic behavior of the hypergeometric function $_2F_3$ to characterizes the asymptotic behavior of the multipliers $\mdel(\nu)$ as $\|\nu\|\to\infty$ by different cases of $\beta\in\bbR\setminus\{n+2, n+4, n+6, n+8,\dots\}.$
\end{comment}

    \item In Section~\ref{sec:regularity}, we present the regularity of solutions analysis for the nonlocal diffusion equation with initial data in $H^s(T^n)$, but without a diffusion source.
    \begin{itemize}
        \item Theorem~\ref{spatial_regularity} and Proposition~\ref{prop_du=v} provide the spatial and temporal regularity results, respectively, in any spatial dimension. The temporal regularity for a general periodic distribution in $H^s(T^n)$, with $s\in\mathbb{R}$, is studied in the sense of Gateaux derivative.
        \item In the case when the Fourier coefficients of the initial data $f\in H^s(T^n)$ are summable  
        \[
            \underset{k\in\Zn}{\sum}|\hat{f}_k|<\infty,
        \]
        then the solution of the nonlocal diffusion equation,  considered as a function of the spatial variable $x$, is a regular $L^2(T^n)$ function and    Proposition~\ref{temporal_regularity} of Section~\ref{regular_fns_as_sol} provides the temporal regularity of the solution with respect to the classical derivative.
        %for the nonlocal diffusion equation with regular functions $f$ which satisfies 
        %\[
         %   \underset{k\in\Zn}{\sum}|\hat{f}_k|<\infty.
        %\]
        \item Theorem~\ref{convergence_NHE_Without_diffusionSource1} and Theorem~\ref{convergence_NHE_Without_diffusionSource2} provide convergence results for the solution of the nonlocal diffusion equation, without a diffusion source, to the solution of the corresponding classical diffusion equation with respect to two different limits: as $\delta\to 0^+$ or as $\beta\to n+2$, respectively.
    \end{itemize}
    \item In Section~\ref{NHE_WithdiffusionSource}, we present the regularity of solutions analysis for the nonlocal diffusion equation, when a diffusion source $b\in H^s(T^n)$, for some $s\in\mathbb{R}$, is present.
    \begin{itemize}
        \item Theorem~\ref{U_2spatial_regularity} and Proposition~\ref{prop_du_2=v} provide the spatial and temporal regularity results, respectively, in any spatial dimension. The temporal regularity for a general periodic distribution in $H^s(T^n)$, with $s\in\mathbb{R}$, is studied in the sense of Gateaux derivative.
        \item In the case when the Fourier coefficients of the source term $b\in H^s(T^n)$ are summable  
        \[
            \underset{k\in\Zn}{\sum}|\hat{b}_k|<\infty,
        \]
        then the solution of the nonlocal diffusion equation,  considered as a function of the spatial variable $x$, is a regular $L^2(T^n)$ function and
        Proposition~\ref{U_2temporal_regularity} of Section~\ref{regular_fns_as_sol2} provides the temporal
        regularity of the solution with respect to the classical derivative.
        \item Theorem~\ref{convergence_NHE_With_diffusionSource1} and Theorem~\ref{convergence_NHE_With_diffusionSource2} provide convergence results for the solution of nonlocal diffusion equation with a non-zero diffusion source to the solution of the corresponding classical diffusion equation with respect to two kinds of limits: as $\delta\to 0^+$ or as $\beta\to n+2$, respectively.
    \end{itemize}
    \item In Section~\ref{propagation_of_discontinuities}, we show that, for the case of integrable kernels, that is when $\beta<n$,   discontinuities in the initial data propagate and persist in the solution of the nonlocal diffusion equation. The  magnitude of a jump discontinuity is shown to decay as time increases.
    \begin{comment}
    Given a discontinuous initial data $f\in L^2(T^n)$, we decompose $u$, the solution of \eqref{nonlocal_diffusion_eqn}, as
    \[
        u(x,t) = v(x,t) + g(t)f(t),
    \]
    for some function $v(x,t)$ which is continuous in $x$ and satisfies $v(x,0)=0$ and some function $g$ satisfying $g(0)=1.$ We show that the magnitude of a jump discontinuity in $u$ decays as $t$ increases.
    \end{comment}
\end{itemize}

\section{Fourier multipliers}\label{sec:multipliers}
In this section, we give a summary of the Fourier multipliers' results introduced in \cite{alali2021fourier}, which are relevant to the work presented in Section \ref{sec:regularity}. These multipliers are defined through the Fourier transform by
\begin{eqnarray}
    \Ldel u(x)=\frac{1}{(2\pi)^n}\int_{\bbR^n}\mdel \hat{u}(\nu)e^{i\nu\cdot x}d\nu,\label{FourierLdel}
\end{eqnarray}
where $\mdel(\nu)$ is given by 
\begin{eqnarray}
    \mdel(\nu)=\cdel\int_{B_\delta(0)}\frac{\cos(\nu\cdot z)-1}{\|z\|^\beta}dz,\label{multipliers}
\end{eqnarray}
    for $\beta<n+2$.
The following theorem gives the hypergeometric representation of these multipliers.
\begin{theorem}
Let $n\ge 1,~\delta>0$ and $\beta<n+2$. Then the Fourier multipliers can be written as
\begin{eqnarray}
    \mdel(\nu) = -\|\nu\|^2 ~_2F_3\left(1,\frac{n+2-\beta}{2};2,\frac{n+2}{2},\frac{n+4-\beta}{2};-\frac{1}{4}\|\nu\|^2\delta^2\right).
    \label{eq:multiplier-general}
\end{eqnarray}
\end{theorem}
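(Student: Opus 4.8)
The plan is to compute the defining integral \eqref{multipliers} directly by expanding the cosine into its Maclaurin series and reducing the problem to the angular and radial moments of $\|z\|^{-\beta}$ over the ball $B_\delta(0)$. First I would note that since $\cos(\nu\cdot z)-1 = -\tfrac12(\nu\cdot z)^2 + O(\|z\|^4)$ near the origin, the integrand is $O(\|z\|^{2-\beta})$, which is integrable over $B_\delta(0)$ precisely because $\beta < n+2$; this guarantees $\mdel(\nu)$ is well defined. Writing $\cos(\nu\cdot z)-1 = \sum_{m=1}^\infty \frac{(-1)^m}{(2m)!}(\nu\cdot z)^{2m}$ and interchanging sum and integral (justified on the compact ball by dominated convergence, since $|\sum_{m=1}^N \tfrac{(-1)^m}{(2m)!}(\nu\cdot z)^{2m}|\le \cosh(\|\nu\|\,\|z\|)-1 = O(\|z\|^2)$ near $0$, and only even powers appear) reduces the task to evaluating, for each $m\ge 1$, the moment $\int_{B_\delta(0)} (\nu\cdot z)^{2m}\|z\|^{-\beta}\,dz$.

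Next I would pass to spherical coordinates $z = r\omega$, with $r\in(0,\delta)$ and $\omega\in S^{n-1}$, so that each moment factors into a radial integral and an angular integral. The radial part is elementary, $\int_0^\delta r^{2m-\beta+n-1}\,dr = \delta^{2m+n-\beta}/(2m+n-\beta)$, convergent for all $m\ge 1$ again thanks to $\beta<n+2$. For the angular part, rotational invariance of the surface measure lets me replace $(\nu\cdot\omega)^{2m}$ by $\|\nu\|^{2m}\omega_1^{2m}$ (with $\omega_1 = e_1\cdot\omega$), and I would invoke the classical evaluation
\begin{equation*}
    \int_{S^{n-1}} \omega_1^{2m}\,d\sigma(\omega) = \frac{2\pi^{(n-1)/2}\,\Gamma\!\left(m+\tfrac12\right)}{\Gamma\!\left(m+\tfrac{n}{2}\right)},
\end{equation*}
derivable from the Beta integral or a Gaussian computation. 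Assembling these pieces presents $\mdel(\nu)$ as an explicit power series in $\|\nu\|^2\delta^2$ with Gamma-function coefficients.

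Finally, I would multiply by the explicit scaling constant $\cdel$, reindex the series by $m\mapsto k+1$, and factor out $-\|\nu\|^2$ to bring it into the shape $-\|\nu\|^2\sum_{k\ge0} C_k(\|\nu\|^2\delta^2)^k$. To identify this with the claimed $\,{}_2F_3$, the cleanest route is to match the series term-ratio rather than each coefficient in closed form: I would verify that $C_{k+1}/C_k$ equals the hypergeometric ratio $\frac{(1+k)(\frac{n+2-\beta}{2}+k)}{(2+k)(\frac{n+2}{2}+k)(\frac{n+4-\beta}{2}+k)(1+k)}\cdot(-\tfrac14\|\nu\|^2\delta^2)$, after the systematic simplifications $k+\tfrac32 = \tfrac12(2k+3)$ and $\frac{(2k+2)!}{(2k+4)!} = \frac{1}{(2k+3)(2k+4)}$ and the analogous Gamma-quotient collapses. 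Matching the ratios fixes the series up to its leading coefficient, so it remains to check $C_0 = 1$; this is where the precise form of $\cdel$ together with $\Gamma(\tfrac32) = \tfrac{\sqrt\pi}{2}$ makes every factor (the $\Gamma(\tfrac{n}{2}+1)$, the $n+2-\beta$, and the $\sqrt\pi$) cancel to leave exactly $1$.

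I expect the main obstacle to be purely one of careful bookkeeping: tracking the Gamma-function and factorial factors through the reindexing and confirming that $\cdel$ normalizes the leading term to $1$. Convergence is not a serious issue, since $\,{}_2F_3$ has more lower than upper parameters and is therefore entire, and the interchange of summation and integration is justified by the integrability estimate above. The only genuinely nontrivial analytic input is the closed form of the angular moment $\int_{S^{n-1}}\omega_1^{2m}\,d\sigma$, which is classical.
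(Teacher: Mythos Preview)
The paper does not actually prove this theorem; it is stated in Section~\ref{sec:multipliers} as part of a summary of results imported from \cite{alali2021fourier}, with no proof given here. Your proposal is correct and is the natural direct argument: expand the cosine, justify the interchange via the integrable dominating function $(\cosh(\|\nu\|\,\|z\|)-1)/\|z\|^\beta$, factor each moment into the radial integral $\delta^{2m+n-\beta}/(2m+n-\beta)$ and the classical angular moment $\int_{S^{n-1}}\omega_1^{2m}\,d\sigma$, then reindex and identify the series either coefficient-by-coefficient or, as you do, via the term ratio. Your check that the scaling constant $\cdel$ forces $C_0=1$ (using $\Gamma(3/2)=\sqrt{\pi}/2$ and the cancellation of $\Gamma(n/2+1)$ and $n+2-\beta$) is exactly the right closing step, and the only place to be meticulous is the bookkeeping you already flag.
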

The hypergeometric function $_2F_3$ on the right hand side is well-defined for any $\beta\ne n+4,n+6,\cdots$, hence, using \eqref{eq:multiplier-general}, the definition of the multipliers is extended to the case when $\beta\ge n+2$ with $\beta\ne n+4, n+6,\cdots$. 
Consequently, the operator $\Ldel$ is extended to these larger  values of $\beta$ using the Fourier transform.
%define the Fourier multipliers using the same formula for the case where 
%Then, in these cases, the Nonlocal Laplace operator is extended by being defined through its multipliers as a pseudo-differential operator. Specifically, 
In particular, for the case  when $\beta=n+2$, and since $m^{\delta,n+2}(\nu)$ is equal to $-\|\nu\|^2$, the extended operator $\Ldel$ coincides with the classical Laplace operator $\Delta$. For the case, $n+2<\beta<n+4$, the extended operator $\Ldel$ corresponds to a {\it nonlocal super-diffusion operator} \cite{alali2020fourier}.
%In the case where $\beta = n+2$, we note that  $\mdel(\nu)=-\|\nu\|^2$. These multipliers are we

The representation \eqref{eq:multiplier-general} is used to provide the asymptotic behavior of $\mdel(\nu)$ for large $\|\nu\|$. This is given by the following result \cite{alali2021fourier}.
\begin{theorem}\label{thm:asymptotic_behavior}
Let $n\ge 1$, $\delta>0$ and $\beta\notin\{n+2,n+4,n+6,\dots\}$. Then, as $\|\nu\|\to \infty$,
\begin{eqnarray}
\label{eq:asym_multip}
    \mdel(\nu) \sim \begin{cases}
        -\frac{2n(n+2-\beta)}{\delta^2(n-\beta)}+2\left(\frac{2}{\delta}\right)^{n+2-\beta}\frac{\Gamma\left(\frac{n+4-\beta}{2}\right)\Gamma\left(\frac{n+2}{2}\right)}{(n-\beta)\Gamma\left(\frac{\beta}{2}\right)}\|\nu\|^{\beta-n},~~\text{ if }\beta\ne n,\\[1ex]
        -\frac{2n}{\delta^2}\left(2\log\|\nu\|+\log\left(\frac{\delta^2}{4}\right)+\gamma-\psi\left(\frac{n}{2}\right)\right),~~\text{ if }\beta=n,
    \end{cases}
\end{eqnarray}
where $\gamma$ is Euler's constant and $\psi$ is the digamma function.

\end{theorem}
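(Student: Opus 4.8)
The plan is to start from the hypergeometric representation \eqref{eq:multiplier-general} and extract the large-argument behavior of the entire function $_2F_3$ at the negative real argument $z=-\tfrac14\|\nu\|^2\delta^2$, which tends to $-\infty$ as $\|\nu\|\to\infty$. The natural tool is the Mellin--Barnes representation
\begin{equation*}
{}_2F_3(a_1,a_2;b_1,b_2,b_3;z)=\frac{\Gamma(b_1)\Gamma(b_2)\Gamma(b_3)}{\Gamma(a_1)\Gamma(a_2)}\,\frac{1}{2\pi i}\int_{\mathcal{C}}\frac{\Gamma(a_1+s)\Gamma(a_2+s)\Gamma(-s)}{\Gamma(b_1+s)\Gamma(b_2+s)\Gamma(b_3+s)}(-z)^{s}\,ds,
\end{equation*}
with $(a_1,a_2)=(1,\tfrac{n+2-\beta}{2})$ and $(b_1,b_2,b_3)=(2,\tfrac{n+2}{2},\tfrac{n+4-\beta}{2})$. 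For $-z\to+\infty$ I would close the contour to the left and sum the residues at the poles of $\Gamma(a_1+s)$ and $\Gamma(a_2+s)$, which produces the algebraic asymptotic series. Because $q-p+1=2$, the only non-algebraic contribution is an oscillatory term of order $(-z)^{\vartheta}$ with $\vartheta=-\tfrac{n+5}{4}$; after multiplication by $-\|\nu\|^2$ this is $O(\|\nu\|^{-(n+1)/2})$, hence negligible beside the listed terms, so the asymptotics of $\mdel$ are governed entirely by the leading residues.

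Next I would isolate the two dominant residues. The simple pole of $\Gamma(a_1+s)=\Gamma(1+s)$ at $s=-1$ yields a term proportional to $(-z)^{-1}\sim\|\nu\|^{-2}$, which after multiplication by $-\|\nu\|^2$ contributes a constant; carrying out the residue (and simplifying the Gamma ratios via $\Gamma(\tfrac{n+2}{2})=\tfrac n2\Gamma(\tfrac n2)$ and $\Gamma(\tfrac{n+4-\beta}{2})=\tfrac{n+2-\beta}{2}\Gamma(\tfrac{n+2-\beta}{2})$) should reproduce exactly $-\tfrac{2n(n+2-\beta)}{\delta^2(n-\beta)}$. A useful cross-check is that this equals the elementary value $-\cdel\int_{B_\delta(0)}\|z\|^{-\beta}\,dz$ for $\beta<n$, i.e.\ the contribution of the $-1$ in the kernel \eqref{multipliers}. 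The pole of $\Gamma(a_2+s)$ at $s=-\tfrac{n+2-\beta}{2}$ gives the term $(-z)^{-\frac{n+2-\beta}{2}}\sim\|\nu\|^{\beta-n-2}$; its residue simplifies because $b_3-a_2=1$, and using $(-z)^{-\frac{n+2-\beta}{2}}=(\tfrac2\delta)^{n+2-\beta}\|\nu\|^{\beta-n-2}$ together with the factor $-\|\nu\|^2$ should yield precisely $2\bigl(\tfrac2\delta\bigr)^{n+2-\beta}\tfrac{\Gamma(\frac{n+4-\beta}{2})\Gamma(\frac{n+2}{2})}{(n-\beta)\Gamma(\frac{\beta}{2})}\|\nu\|^{\beta-n}$. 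For $n<\beta<n+2$ this power dominates the constant, while for $\beta<n$ it is a decaying correction; in both regimes the stated two-term expression captures the leading behavior. The exclusion $\beta\notin\{n+2,n+4,\dots\}$ is exactly what is needed: $\beta=n+4,n+6,\dots$ would place $b_3=\tfrac{n+4-\beta}{2}$ on the nonpositive integers (where $_2F_3$ and these residues are undefined), and $\beta=n+2$ is the exact classical case $\mdel=-\|\nu\|^2$.

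The case $\beta=n$ is the main obstacle. There $a_2=\tfrac{n+2-\beta}{2}=1=a_1$, so the simple poles of $\Gamma(1+s)$ and $\Gamma(a_2+s)$ coalesce into \emph{double} poles at $s=-1,-2,\dots$. The residue at the double pole $s=-1$ then requires differentiating $(-z)^{s}$ together with the Gamma factors; the $\tfrac{d}{ds}(-z)^s$ piece generates $\log(-z)=2\log\|\nu\|+\log(\tfrac{\delta^2}{4})$, while the logarithmic derivatives $\tfrac{d}{ds}\log\Gamma$ produce digamma values, giving the constants $\gamma$ and $\psi(\tfrac n2)$. Assembling these into $-\tfrac{2n}{\delta^2}\bigl(2\log\|\nu\|+\log(\tfrac{\delta^2}{4})+\gamma-\psi(\tfrac n2)\bigr)$ is the delicate confluent computation and the only real difficulty in the proof. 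The remaining steps --- verifying convergence of the Mellin--Barnes contour, justifying the termwise residue summation in the regime $p<q+1$, and confirming across every admissible $\beta$ that the oscillatory exponential contribution is genuinely subdominant --- are routine consequences of the standard theory of Mellin--Barnes asymptotics.
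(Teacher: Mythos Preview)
The paper does not actually prove this theorem: Section~\ref{sec:multipliers} is explicitly a review, and Theorem~\ref{thm:asymptotic_behavior} is quoted from \cite{alali2021fourier} with no argument given here. So there is no in-paper proof to compare against.

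That said, your proposal is a sound derivation. Extracting the large-argument asymptotics of ${}_2F_3$ via its Mellin--Barnes representation and residue calculus at the poles of $\Gamma(a_1+s)$ and $\Gamma(a_2+s)$ is precisely the standard route to the algebraic asymptotic expansion of ${}_pF_q$ for $p\le q$ (cf.\ DLMF \S16.11 or Luke's treatise), and the cited paper \cite{alali2021fourier} almost certainly either invokes this expansion directly or reproduces it by the same mechanism. Your identification of the two leading poles, the cross-check against $-\cdel\int_{B_\delta}\|z\|^{-\beta}\,dz$ for the constant term when $\beta<n$, the confluence at $\beta=n$ producing the double pole and hence the logarithm and digamma constants, and the observation that the oscillatory contribution is of strictly lower order are all correct in outline. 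The only places requiring care are the exact bookkeeping of Gamma-ratio simplifications for the two residues and the explicit double-pole computation at $\beta=n$; these are tedious but mechanical, and you have flagged them appropriately.
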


To simplify the notation, throughout this article we will denote $\mdel$ simply by $m$. However, in places in which there is a need to emphasize the dependence of the multipliers on the parameters $\delta$ and $\beta$, such as when we take limits in those parameters, we will revert to the notation $\mdel$.  

\section{Regularity of solutions for the peridynamic diffusion equation}\label{sec:regularity}
In this section, we focus on the following nonlocal diffusion equation with initial data and no diffusion source 
\begin{eqnarray}
\label{eq:nonlocal_diffusion_nosource}
\begin{cases}
    u_t(x,t)=\Ldel u(x,t) ,~~x\in T^n,~t>0,\\
    u(x,0) = f(x).
\end{cases}\label{nonlocal_diffusion_eqn1}
\end{eqnarray}
In order to study the existence, uniqueness, and regularity of solutions to \eqref{nonlocal_diffusion_eqn1} over the space of periodic distributions, we consider the identification $U(t)=u(\cdot,t)$, with $U:[0,\infty)\rightarrow H^s(T^n)$.
%In this section, we study   the regularity of solutions for the nonlocal diffusion equation by using Theorem \ref{thm:asymptotic_behavior}. Moreover, we show that the solution of the nonlocal diffusion equation converges to the solution of the classical diffusion equation.

\subsection{Eigenvalues on periodic domains}
Let $\Ldel$ be defined on the periodic torus 
\begin{eqnarray*}
    T^n = \prod_{i=1}^n [0,r_i],~r_i>0, ~i=1,2,\cdots,n.
\end{eqnarray*}
Define 
$$\nu_k=\left(\frac{2\pi k_1}{r_1},\frac{2\pi k_2}{r_2},\dots,\frac{2\pi k_n}{r_n}\right),$$
for any $k\in \Zn$. Let $\phi_k(x)=e^{i\nu_k\cdot x}$. Then, 
\begin{eqnarray}
    \Ldel \phi_k(x) = m^{\delta,\beta}(\nu_k)\phi_k(x),\label{eigenvalues_L}
\end{eqnarray}
which shows that $\phi_k$ is an eigenfunction of $\Ldel$ with eigenvalues $m^{\delta,\beta}(\nu_k)$. To simplify the notation, we will often suppress the dependence of the multipliers on $\delta$ and $\beta$ and use $m(\nu)$ to denote $m^{\delta,\beta}(\nu)$. 

\vspace{10pt}
\noindent Consider the nonlocal diffusion equation defined in \eqref{nonlocal_diffusion_eqn}. For $s\in\mathbb{R}$, let $H^s(T^n)$ be the space of periodic distributions $h$ on $T^n$ such that
\begin{eqnarray*}
\|h\|_{H^s(T^n)}^2:=    \sum_{k\in\Zn}(1+\|k\|^2)^s|\hat{h}_k|^2<\infty.
\end{eqnarray*}
\subsection{Distributional solutions for nonlocal diffusion equation}
Let $f\in H^s(T^n)$ and define $U, V:[0,\infty)\to H^q(T^n)$ for some $q\in\mathbb{R}$, by
\begin{eqnarray}
    U(t)&=&\sum_k\hat{f}_ke^{m(\nu_k)t}e^{i\nu_k\cdot x},\label{U}
    \\V(t)&=&\sum_k\hat{f}_km(\nu_k)e^{m(\nu_k)t}e^{i\nu_k\cdot x}.\label{V}
\end{eqnarray}
Observe that for any $t\ge 0$,  $U(t)$ and $V(t)$ are well-defined periodic distributions,  since $e^{m(\nu_k)t}$ and $m(\nu_k)e^{m(\nu_k)t}$ are both bounded functions in $k$.

\begin{theorem}\label{spatial_regularity}
    Let $n\ge 1,~\delta>0$ and $\beta< n+4$. Let $\epsilon_1$ and $\epsilon_2$ be such that $0<\epsilon_1<\epsilon_2<1$. Assume that $f\in H^s(T^n)$ for some $s\in\bbR$. Then for any fixed $t>0$, $U(t)\in H^p(T^n)$ and $V(t)\in H^{r}(T^n)$, where
    \begin{eqnarray}
	p&=&\begin{cases}
	s, \text{ if }\beta< n,\\
	s+\dfrac{4nt}{\delta^2}(1-\epsilon_1),\text{ if }\beta=n,\\
	\infty, \text{ if } \beta> n, 
	\end{cases}~~
	r=\begin{cases}
	s, \text{ if }\beta< n,\\
	s+\dfrac{4nt}{\delta^2}(1-\epsilon_2),\text{ if }\beta=n,\\
	\infty, \text{ if } \beta> n, 
	\end{cases}\label{p_values}
\end{eqnarray}
with $H^\infty(T^n) := \underset{s \in \bbR}{\bigcap}H^s(T^n)$.
\end{theorem}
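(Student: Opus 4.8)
The plan is to reduce both regularity claims to a single \emph{uniform-in-$k$} bound on an explicit ratio weight, and then to extract that bound in each of the three $\beta$-regimes directly from the multiplier behavior in Theorem~\ref{thm:asymptotic_behavior}. Using the definition of the $H^s$-norm together with \eqref{U}--\eqref{V} and the fact that $m(\nu_k)$ is real-valued, I would write
\begin{align*}
\|U(t)\|_{H^p(T^n)}^2 &= \sum_{k\in\Zn}(1+\|k\|^2)^s|\hat f_k|^2\,\Big[(1+\|k\|^2)^{p-s}e^{2m(\nu_k)t}\Big],\\
\|V(t)\|_{H^r(T^n)}^2 &= \sum_{k\in\Zn}(1+\|k\|^2)^s|\hat f_k|^2\,\Big[(1+\|k\|^2)^{r-s}\,m(\nu_k)^2\,e^{2m(\nu_k)t}\Big].
\end{align*}
Since $f\in H^s(T^n)$, the factor $\sum_k(1+\|k\|^2)^s|\hat f_k|^2$ is finite, so it suffices to bound the two bracketed ratio weights uniformly in $k$. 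Because $\nu_k=(2\pi k_1/r_1,\dots,2\pi k_n/r_n)$ satisfies $a\|k\|\le\|\nu_k\|\le b\|k\|$ for constants $0<a\le b$, the remaining work is to insert the asymptotics of $m(\nu_k)$ and compare exponents, noting in particular $\log\|\nu_k\|=\log\|k\|+O(1)$.

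For $\beta<n$ the representation \eqref{multipliers} with $\cdel>0$ gives $m(\nu_k)\le 0$, while Theorem~\ref{thm:asymptotic_behavior} shows $m(\nu)$ converges to the finite negative constant $-\tfrac{2n(n+2-\beta)}{\delta^2(n-\beta)}$; hence $m$ is bounded, $e^{2m(\nu_k)t}\le 1$, and $m(\nu_k)^2$ is bounded, so both ratio weights are bounded with $p=r=s$. For $\beta>n$ I would split into $\beta=n+2$, where $m(\nu_k)=-\|\nu_k\|^2$, and $n<\beta<n+4$ with $\beta\ne n+2$, where Theorem~\ref{thm:asymptotic_behavior} produces a leading term proportional to $\|\nu_k\|^{\beta-n}$ whose coefficient is negative (the only sign-changing factor is $n-\beta<0$, all gamma factors being positive since $\beta>n>0$ and $\beta<n+4$). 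In either subcase $m(\nu_k)\to-\infty$ at least like a positive power of $\|\nu_k\|$, so $e^{2m(\nu_k)t}$ decays faster than any polynomial and dominates both $(1+\|k\|^2)^{p-s}$ for \emph{every} $p$ and the extra factor $m(\nu_k)^2$; this yields $U(t),V(t)\in H^\infty(T^n)$.

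The substantive case is $\beta=n$, where Theorem~\ref{thm:asymptotic_behavior} gives $m(\nu)=-\tfrac{2n}{\delta^2}\big(2\log\|\nu\|+c_0\big)+o(1)$ with $c_0=\log(\delta^2/4)+\gamma-\psi(n/2)$, so that $e^{2m(\nu_k)t}$ is a constant multiple of $\|\nu_k\|^{-8nt/\delta^2}$ up to an $o(1)$ correction. Taking logarithms of the ratio weight for $U$ with $p-s=\tfrac{4nt}{\delta^2}(1-\epsilon_1)$, the two leading $\log\|k\|$ contributions combine into $-\tfrac{8nt}{\delta^2}\epsilon_1\log\|k\|+O(1)\to-\infty$, so the weight not merely is bounded but tends to $0$; the strict positivity of $\epsilon_1$ is exactly what overcomes the subleading constant and $o(1)$ terms in the asymptotic. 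For $V$ the extra factor $m(\nu_k)^2$ contributes a $(\log\|k\|)^2$ growth that is still dominated as soon as $\epsilon_2>0$, and taking $\epsilon_2>\epsilon_1$ keeps the statement clean while recording that $V(t)$ lies in a strictly smaller space than $U(t)$.

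I expect the main obstacle to be the rigorous bookkeeping in the $\beta=n$ case: converting the limiting asymptotic into a genuinely uniform bound on the ratio weights valid for \emph{all} $k\in\Zn$, not only asymptotically. This requires fixing a threshold $\|k\|\ge K$ beyond which Theorem~\ref{thm:asymptotic_behavior} controls $m(\nu_k)$ with the desired $\epsilon$-margin, and separately bounding the finitely-transitional weights for $\|k\|<K$ by a constant (there $e^{2m(\nu_k)t}\le 1$ and the weights are finite by continuity). One must also verify carefully that the $\epsilon$-slack absorbs both the logarithmic correction inside $m$ and, for $V$, the additional polylogarithmic factor $m(\nu_k)^2$ — which is precisely the reason for carrying two parameters $\epsilon_1<\epsilon_2$ rather than a single one.
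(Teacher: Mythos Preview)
Your proposal is correct and follows essentially the same strategy as the paper: reduce to a uniform-in-$k$ bound on the ratio weights $(1+\|k\|^2)^{p-s}e^{2m(\nu_k)t}$ and $(1+\|k\|^2)^{r-s}m(\nu_k)^2 e^{2m(\nu_k)t}$, then handle the three $\beta$-regimes via the asymptotics in Theorem~\ref{thm:asymptotic_behavior}, with the threshold-plus-finitely-many-terms bookkeeping you describe for $\beta=n$. The one cosmetic difference is in the role of $\epsilon_1<\epsilon_2$: the paper reuses $\epsilon_1$ as the asymptotic margin in the $V$ estimate and then dominates the extra $(\log\|\nu_k\|)^2$ by bounding $\log\|\nu_k\|\le \|\nu_k\|^{\frac{4nt}{\delta^2}(\epsilon_2-\epsilon_1)}$, so the strict inequality is genuinely used there rather than being merely organizational.
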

\begin{proof}
    We observe that
\begin{eqnarray*}
    \sum_{0\ne k\in \mathbb{Z}^n}(1+\|k\|^2)^p|\hat{U}_k|^2= \sum_{0\ne k\in \mathbb{Z}^n} (1+\|k\|^2)^{p-s}(1+\|k\|^2)^s|\hat{f}_ke^{m(\nu_k)t}|^2.
\end{eqnarray*}
Since $f\in H^s(T^n)$, then the result that $U(t)\in H^p(T^n)$ follows by showing that
$$(1+\|k\|^2)^{p-s}e^{2m(\nu_k)t}$$
is bounded for $k\ne 0$. To see this, we consider three cases. For the case $\beta<n$, then $p-s=0$ and 
$$
	e^{2m(\nu_k)t}(1+\|k\|^2)^{p-s}= e^{2m(\nu_k)t},
$$
which is bounded since $m(\nu_k)\le 0$.

 For the case when $n<\beta<n+4$, let $q\in\bbR$ be arbitrary. Then,  
\begin{eqnarray*}
	(1+\|k\|^2)^{q-s}e^{2m(\nu_k)t}=\frac{(1+\|k\|^2)^{q-s}}{e^{2t|m(\nu_k)|}},
%	\to 0,\text{ as } k\to\infty,
\end{eqnarray*}
which vanishes as $\|k\|\to \infty$, and hence boundedness follows. Thus, $U(t)\in H^q(T^n)$ for all $q$ and therefore,  
$$U(t)\in\underset{q \in \bbR}{\bigcap}H^q(T^n)= H^\infty(T^n).$$

For the case when $\beta=n$, then $p-s=\frac{4nt}{\delta^2}(1-\epsilon_1)$. From Theorem \ref{thm:asymptotic_behavior}, we have
\[
    m(\nu_k)\sim -\frac{4n}{\delta^2}\log\|\nu_k\|,
\]
which implies that 
\begin{eqnarray}
    \lim_{\|\nu_k\|\to\infty}\frac{m(\nu_k)}{-\frac{4n}{\delta^2}\log\|\nu_k\|}=1.
\end{eqnarray}
Thus, for any $\epsilon_1>0$, there exists $N\in\mathbb{N}$ such that
\begin{eqnarray}
    -\frac{4n}{\delta^2}(1+\epsilon_1)\log\|\nu_k\|\le m(\nu_k)\le -\frac{4n}{\delta^2}(1-\epsilon_1)\log\|\nu_k\|,\label{asymptotic_inequality}
\end{eqnarray}
for all $\|\nu_k\|\ge N$. Therefore,
\begin{eqnarray}
    e^{2m(\nu_k)t}&\le& e^{-\frac{8nt}{\delta^2}(1-\epsilon_1)\log\|\nu_k\|}
    =\|\nu_k\|^{-\frac{8nt}{\delta^2}(1-\epsilon_1)}.
    \label{eq:exp_estimate}
\end{eqnarray}
Since there exists $A>0$  such that $A\|k\|\le \|\nu_k\|,$ then
\begin{eqnarray*}
	(1+\|k\|^2)^{p-s}e^{2m(\nu_k)t}&\le&\frac{(1+\|k\|^2)^{\frac{4nt}{\delta^2}(1-\epsilon_1)}}{\|\nu_k\|^{\frac{8nt}{\delta^2}(1-\epsilon_1)}}\le \frac{(1+\|k\|^2)^{{\frac{4nt}{\delta^2}(1-\epsilon_1)}}}{(A\|k\|)^{\frac{8nt}{\delta^2}(1-\epsilon_1)}},
\end{eqnarray*}
which is bounded.

Similarly, to show that $V(t)\in H^{r}(T^n),$ we   show that 
$$(1+\|k\|^2)^{r-s}m(\nu_k)^2e^{2m(\nu_k)t}$$
is bounded for $k\ne 0.$ For the case when $\beta<n$, then $r-s=0$ and there exists a constant $C>0$ such that $|m(\nu_k)|\le C$. Thus,
$$(1+\|k\|^2)^{r-s}m(\nu_k)^2e^{2m(\nu_k)t}=m(\nu_k)^2e^{2m(\nu_k)t}$$
is bounded. For the case when $n<\beta<n+4$, then for an arbitrary $q'\in\bbR$, we have  
\begin{eqnarray*}
	(1+\|k\|^2)^{q'-s}m(\nu_k)^2e^{2m(\nu_k)t}=\frac{m(\nu_k)^2(1+\|k\|^2)^{q'-s}}{e^{2t|m(\nu_k)|}},
%	\to 0,\text{ as } k\to\infty,
\end{eqnarray*}
which vanishes as $\|k\|\to\infty$, and therefore boundedness follows. Thus, $V(t)\in H^{q'}(T^n)$ for all $q'\in\bbR$ and hence,
$$V(t)\in\underset{q' \in \bbR}{\bigcap}H^{q'}(T^n)= H^\infty(T^n).$$
When $\beta=n,$ then $r-s=\frac{4nt}{\delta^2}(1-\epsilon_2)$. From \eqref{asymptotic_inequality},
\begin{equation}
\label{eq:msquared}
    |m(\nu_k)|^2\le \left(\frac{4n}{\delta^2}(1+\epsilon_1)\right)^2(\log\|\nu_k\|)^2.
\end{equation}
In addition, there exists $N_2\in\mathbb{N}$ such that 
\begin{eqnarray}
    \log(\|\nu_k\|)\le\|\nu_k\|^{\frac{4nt}{\delta^2}(\epsilon_2-\epsilon_1)},\label{log-upperbound}
\end{eqnarray}
for all $\|\nu_k\|>N_2$. Moreover, there exists $B>0$  such that $\|\nu_k\|\le B\|k\|$. Hence, by using \eqref{eq:exp_estimate}. \eqref{eq:msquared}, and \eqref{log-upperbound}, we obtain 
\begin{eqnarray*}
    (1+\|k\|^2)^{r-s}|m(\nu_k)|^2e^{2m(\nu_k)t}&\le&\frac{(1+\|k\|^2)^{\frac{4nt}{\delta^2}(1-\epsilon_2)}\left(\frac{4n}{\delta^2}(1+\epsilon_1)\right)^2(B^2\|k\|^2)^{\frac{4nt}{\delta^2}(\epsilon_2-\epsilon_1)}}{(A\|k\|)^{\frac{8nt}{\delta^2}(1-\epsilon_1)}}\\
    &=&C \frac{(1+\|k\|^2)^{\frac{4nt}{\delta^2}(1-\epsilon_2)}}{\|k\|^{\frac{8nt}{\delta^2}(1-\epsilon_2)}},
\end{eqnarray*}
where 
\[
    C=\frac{\left(\frac{4n}{\delta^2}(1+\epsilon_1)\right)^2B^{\frac{8nt}{\delta^2}(\epsilon_2-\epsilon_1)}}{A^{\frac{8nt}{\delta^2}(1-\epsilon_1)}}.
\]
This shows boundedness and therefore completing the proof.
\end{proof}

For any $J\in H^s(T^n),~s\in\bbR$, define
\begin{eqnarray}
     \Ldel J = \sum_{k\in\Zn}m(\nu_k)\hat{J}_ke^{i \nu_k\cdot x}.\label{LJ}
\end{eqnarray}
 
\begin{lem}\label{lem_lu=v}
    Let $U$ and $V$ be as defined in \eqref{U} and \eqref{V} respectively, then $\Ldel U(t)=V(t)$.
\end{lem}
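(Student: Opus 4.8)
The plan is to verify the identity directly at the level of Fourier coefficients, since the operator $\Ldel$ and both distributions $U(t)$ and $V(t)$ are specified entirely through their Fourier series. First I would read off the Fourier coefficients of $U(t)$ from the defining series \eqref{U}: because the exponentials $\phi_k(x)=e^{i\nu_k\cdot x}$ form an orthogonal system on $T^n$, the coefficients are uniquely determined, and one obtains $\hat{U}_k(t)=\hat{f}_ke^{m(\nu_k)t}$ for every $k\in\Zn$.

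Next I would confirm that $\Ldel$ may legitimately be applied to $U(t)$. By Theorem~\ref{spatial_regularity}, for each fixed $t>0$ the distribution $U(t)$ lies in $H^p(T^n)$ for the exponent $p$ recorded there, so in particular $U(t)\in H^s(T^n)$ for some $s\in\bbR$, and the definition \eqref{LJ} of $\Ldel$ on periodic distributions applies with $J=U(t)$. The final step is then a substitution. Using \eqref{LJ} together with the coefficients identified above,
\begin{eqnarray*}
    \Ldel U(t)=\sum_{k\in\Zn}m(\nu_k)\hat{U}_k(t)e^{i\nu_k\cdot x}=\sum_{k\in\Zn}\hat{f}_km(\nu_k)e^{m(\nu_k)t}e^{i\nu_k\cdot x},
\end{eqnarray*}
which is precisely the series defining $V(t)$ in \eqref{V}. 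Hence $\Ldel U(t)=V(t)$.

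There is essentially no analytic obstacle in this argument: the substance of the lemma is just the bookkeeping of Fourier coefficients, and the interchange of $\Ldel$ with the infinite sum is already built into the distributional definition \eqref{LJ}, which acts coefficientwise by multiplication by $m(\nu_k)$. The only point requiring genuine care is well-posedness, namely that $U(t)$ lands in a Sobolev space on which \eqref{LJ} is meaningful and that the resulting series is itself a bona fide periodic distribution. Both facts are supplied by Theorem~\ref{spatial_regularity}, which gives $U(t)\in H^p(T^n)$ and $V(t)\in H^r(T^n)$; thus the expressions on each side of the asserted equality are guaranteed to make sense before the coefficientwise comparison is invoked.
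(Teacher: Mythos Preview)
Your proposal is correct and follows essentially the same approach as the paper: apply the coefficientwise definition \eqref{LJ} of $\Ldel$ to $U(t)$, substitute $\hat{U}_k(t)=\hat{f}_ke^{m(\nu_k)t}$, and recognize the resulting series as $V(t)$. The paper's proof is in fact terser, omitting the well-posedness remarks you added via Theorem~\ref{spatial_regularity}, but the core computation is identical.
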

\begin{proof}
By \eqref{LJ}, we have
\begin{eqnarray*}
    \Ldel U(t)&=&\sum_{k\in\Zn}m(\nu_k)\hat{U}_k(t)e^{i \nu_k\cdot x}\\
    &=&\sum_{k\in\Zn}m(\nu_k)\hat{f}_ke^{m(\nu_k)t}e^{i\nu_k\cdot x}\\
    &=&V(t).
\end{eqnarray*}
\end{proof}

\begin{prop}\label{prop_du=v}
    Let $N\in \mathbb{N}\cup \{0\}$ and define
    \begin{eqnarray*}
        U^{(N)}(t)&=&\underset{k}{\sum} \hat{f}_km(\nu_k)^{N}e^{m(\nu_k)t}e^{i\nu_k\cdot x},\\
        V^{(N)}(t)&=&\underset{k}{\sum} \hat{f}_km(\nu_k)^{N+1}e^{m(\nu_k)t}e^{i\nu_k\cdot x}.
    \end{eqnarray*}
    Then $\frac{d}{dt}U^{(N)}(t)=V^{(N)}(t)$, for all $t\in (0,\infty).$ Equivalently,
    \[
        \frac{d^N}{dt^N}U(t) = \underset{k}{\sum} \hat{f}_km(\nu_k)^{N+1}e^{m(\nu_k)t}e^{i\nu_k\cdot x},
    \]
    where the differentiation here is in the sense of Gateaux differentiation.
\end{prop}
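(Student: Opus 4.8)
The plan is to verify the Gateaux derivative directly from its definition, i.e. to show that for each fixed $t>0$ the difference quotient $h^{-1}\big(U^{(N)}(t+h)-U^{(N)}(t)\big)$ converges to $V^{(N)}(t)$ in the norm of a Sobolev space $H^q(T^n)$ in which all three objects live (the admissible exponent $q$ being the one furnished by Theorem~\ref{spatial_regularity}, adjusted for the extra powers of $m$). Writing $\lambda_k=m(\nu_k)$, the $k$-th Fourier coefficient of the error $h^{-1}\big(U^{(N)}(t+h)-U^{(N)}(t)\big)-V^{(N)}(t)$ is
\[
\hat f_k\,\lambda_k^{N}\,e^{\lambda_k t}\Big(\frac{e^{\lambda_k h}-1}{h}-\lambda_k\Big),
\]
so the task reduces to estimating the scalar factor $\big|\tfrac{e^{\lambda_k h}-1}{h}-\lambda_k\big|$ uniformly enough in $k$ to force the $H^q$-norm of the error to $0$.

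First I would record, via Taylor's theorem with Lagrange remainder applied to $a\mapsto e^{a}$ at $a=\lambda_k h$, the identity $\frac{e^{\lambda_k h}-1}{h}-\lambda_k=\frac{\lambda_k^2 h}{2}\,e^{\xi_k}$ for some $\xi_k$ between $0$ and $\lambda_k h$. Since $\lambda_k=m(\nu_k)\le 0$, restricting to increments with $|h|<t/2$ gives $t+h\ge t/2>0$; bounding $e^{\xi_k}$ by $\max\{1,e^{\lambda_k h}\}$ and absorbing it against $e^{2\lambda_k t}$ then yields the clean pointwise bound
\[
\lambda_k^{2N}e^{2\lambda_k t}\Big|\frac{e^{\lambda_k h}-1}{h}-\lambda_k\Big|^2\le \frac{h^2}{4}\,\lambda_k^{2N+4}\,e^{\lambda_k t},
\]
valid for both signs of $h$. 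Consequently the squared $H^q$-norm of the error is dominated by $\tfrac{h^2}{4}\sum_k(1+\|k\|^2)^q|\hat f_k|^2\,\lambda_k^{2N+4}\,e^{\lambda_k t}$.

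The remaining point is that this majorizing series is finite for a suitable $q$, which is exactly the kind of statement established in Theorem~\ref{spatial_regularity}: one shows that $(1+\|k\|^2)^{q-s}\lambda_k^{2N+4}e^{\lambda_k t}$ is bounded in $k$ (for $\beta<n$ the multiplier is bounded and one takes $q=s$; for $n<\beta<n+4$ the exponential decay of $e^{\lambda_k t}$ beats every polynomial, so any $q$ works and one lands in $H^\infty(T^n)$; for $\beta=n$ the logarithmic growth of $\lambda_k$ forces the same $\epsilon$-type loss in the exponent as in \eqref{p_values}). Hence the series is controlled by $\|f\|_{H^s(T^n)}^2$ up to a constant depending on $t,q,s,N$, and since the whole bound carries the prefactor $h^2$ it tends to $0$ as $h\to0$, establishing $\frac{d}{dt}U^{(N)}(t)=V^{(N)}(t)$ in the Gateaux sense. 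Finally, observing that $V^{(N)}=U^{(N+1)}$, the stated closed form for the iterated derivative follows by induction on $N$.

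The main obstacle is the unbounded growth of $|m(\nu_k)|$ as $\|k\|\to\infty$ when $\beta\ge n$: the difference quotient cannot be controlled uniformly in $k$ by elementary means, and everything hinges on the decay furnished by $e^{m(\nu_k)t}$ — valid precisely because $t>0$ and $m(\nu_k)\le0$ — being strong enough to absorb both the polynomial factor $\lambda_k^{2N+4}$ and the Sobolev weight $(1+\|k\|^2)^q$. Care is also needed with the sign of $h$ when bounding $e^{\xi_k}$, which is why I would restrict to $|h|<t/2$ from the outset so that the shifted exponential $e^{2\lambda_k(t+h)}$ stays dominated by $e^{\lambda_k t}$.
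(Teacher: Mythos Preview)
Your argument is correct. Both you and the paper reduce to controlling the Fourier coefficient
\[
\hat f_k\,\lambda_k^{N}e^{\lambda_k t}\Big(\tfrac{e^{\lambda_k h}-1}{h}-\lambda_k\Big)
\]
in a weighted $\ell^2$ sense, but you diverge from the paper in how the scalar factor is handled. The paper invokes dominated convergence: it bounds the summand uniformly in $h$ (using, in the $\beta=n$ case, the rough estimate $\big|\tfrac{e^{\lambda_k h}-1}{h}-\lambda_k\big|\le 2|\lambda_k|+\epsilon$ followed by a somewhat informal ``$\epsilon\to 0$'') and then passes the limit $h\to 0$ inside the sum. You instead apply the Lagrange remainder to obtain the quantitative estimate $\big|\tfrac{e^{\lambda_k h}-1}{h}-\lambda_k\big|=\tfrac{|h|}{2}\lambda_k^{2}e^{\xi_k}$, and your restriction $|h|<t/2$ cleanly absorbs the worst case $e^{\xi_k}\le e^{\lambda_k h}$ (arising for $h<0$) into the exponential weight, leaving the explicit majorant $\tfrac{h^2}{4}\lambda_k^{2N+4}e^{\lambda_k t}$. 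This buys you an actual rate $O(h^2)$ for the squared $H^q$-error rather than mere convergence, and it sidesteps the dominated convergence bookkeeping entirely; the price is two extra powers of $\lambda_k$ in the majorant, but as you note these are harmless in every regime of $\beta$ since either $\lambda_k$ is bounded ($\beta<n$), grows logarithmically ($\beta=n$), or is annihilated by the exponential ($\beta>n$). The verification that $\sum_k(1+\|k\|^2)^{q-s}|\hat f_k|^2\lambda_k^{2N+4}e^{\lambda_k t}<\infty$ is indeed the same case analysis as in Theorem~\ref{spatial_regularity}, and for $\beta\le n$ the choice $q=s$ already suffices, matching the paper.
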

\begin{remark}
    We note that $U^{(0)}(t)=U(t)$ and $V^{(0)}(t)=V(t)$. In addition, similar to the argument in Theorem \ref{spatial_regularity}, for any $t\ge 0$, both $U^{(N)}(t)$ and $ V^{(N)}(t)$ are in $H^s(T^n)$ when $\beta\le n$ and both are in  $H^\infty(T^n)$ when $n<\beta<n+4$. 
    %For the case $\beta=n$, it is easy to see that both are in $H^s(T^n)$
\end{remark}
\begin{proof}
Let $t>0$, we show that $\frac{d}{dt}U^{(N)}(t)=V^{(N)}(t)$, where the differentiation is in the Gateaux sense, which is given by %Equivalently, we show that 
    %By the definition of the Gateaux differentiation, we will show that
    \[
        \lim_{h\to 0}\left\|\frac{1}{h}\left[U^{(N)}(t+h)-U^{(N)}(t)\right]-V^{(N)}(t)\right\|^2_{H^q(T^n)}=0,
    \]
    where $q=s$ when $\beta\le n$ and $q$ is arbitrary when $n<\beta<n+4$. Equivalently, we  show that
    \begin{eqnarray}
        \lim_{h\to 0}\sum_{k\in \Zn}(1+\|k\|^2)^q|\hat{f}_k|^2m(\nu_k)^{2N}\left[\frac{1}{h}\left(e^{m(\nu_k)h}-1\right)e^{m(\nu_k)t}-m(\nu_k)e^{m(\nu_k)t}\right]^2=0.\label{gateaux}
    \end{eqnarray}
    This result follows from passing the limit inside the sum, which we justify next by the dominated convergence theorem. 
    
    When $\beta<n$, then $q=s$ and there exists a constant $C_1>0$ such that $|m(\nu_k)|<C_1$. Moreover, there exists $C_2>0$ such that 
    \begin{eqnarray}
        \left| \frac{e^{m(\nu_k)h}-1}{h}\right|< C_2,\label{exp_bound}
    \end{eqnarray}
    for all $k$, and for sufficiently small $h$. Combining this with the fact that $f\in H^s(T^n)$, it follows that the summand in the left hand side of \eqref{gateaux} is uniformly bounded.
    
When $\beta>n,$ then the expression
    \begin{eqnarray*}
        (1+\|k\|^2)^q|\hat{f}_k|^2m(\nu_k)^{2N}\left[\frac{1}{h}\left(e^{m(\nu_k)h}-1\right)e^{m(\nu_k)t}-m(\nu_k)e^{m(\nu_k)t}\right]^2,
    \end{eqnarray*}
    is uniformly bounded since $m(\nu_k)\to -\infty$ as $\|k\|\to \infty$.
    
 %   \vspace{10pt}
%    \noindent \textcolor{red}{
    When $\beta=n$, then $q=s$ and it is sufficient to show that 
    \[
        m(\nu_k)^{2N}\left[\frac{1}{h}\left(e^{m(\nu_k)h}-1\right)e^{m(\nu_k)t}-m(\nu_k)e^{m(\nu_k)t}\right]^2=m(\nu_k)^{2N}e^{2m(\nu_k)t}\left[\frac{1}{h}\left(e^{m(\nu_k)h}-1\right)-m(\nu_k)\right]^2,
    \]
    is bounded. There exists $\epsilon>0$ such that
    \begin{eqnarray*}
        \left|\frac{e^{m(\nu_k)h}-1}{h}-m(\nu_k)\right| &\le& \left|\frac{e^{m(\nu_k)h}-1}{h}\right|+|m(\nu_k)|
        \le 2|m(\nu_k)|+\epsilon.
    \end{eqnarray*}
    Thus, by letting $\epsilon\to 0$ and by using \eqref{asymptotic_inequality} and \eqref{log-upperbound}, we have
    \begin{eqnarray*}
        m(\nu_k)^{2N}e^{2m(\nu_k)t}\left[\frac{1}{h}\left(e^{m(\nu_k)h}-1\right)-m(\nu_k)\right]^2&\le&4m(\nu_k)^{2(N+1)}e^{2m(\nu_k)t}\\
        &\le&\frac{4\left(\frac{4n}{\delta^2}(1+\epsilon_1)\right)^{2(N+1)}(B\|k\|^{2(N+1)})^{\frac{4nt(1-\epsilon_1)}{\delta^2(N+1)}}}{(A\|k\|)^{\frac{8nt}{\delta^2}(1-\epsilon_1)}}\\
        &=&\frac{4\left(\frac{4n}{\delta^2}(1+\epsilon_1)\right)^{2(N+1)}B^{\frac{4nt(1-\epsilon_1)}{\delta^2(N+1)}}}{A^{\frac{8nt}{\delta^2}(1-\epsilon_1)}},
    \end{eqnarray*}
    showing uniform boundedness and therefore completing the proof.
%    }
\end{proof}
 The following theorem summarizes the results in this subsection.
\begin{theorem}\label{exist_uniq}
    Let $f\in H^s(T^n)$, $\beta<n+4$, and $s\in\bbR$. Then, there exists a unique solution $U(t)$ to the nonlocal diffusion equation
    \begin{eqnarray}
        \begin{cases}
            \displaystyle\frac{dU}{dt} = \Ldel U(t),\\
            U(0) = f.
        \end{cases}\label{gateaux_nonlocal_diffusion_eqn}
    \end{eqnarray}
    Moreover, $U\in C^\infty((0,\infty);H^p(T^n))$, where $p$ is as defined in \eqref{p_values}.
\end{theorem}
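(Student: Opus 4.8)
The plan is to produce the solution explicitly as the series $U(t)$ in \eqref{U}, verify it solves \eqref{gateaux_nonlocal_diffusion_eqn}, then establish uniqueness by passing to Fourier coefficients, and finally upgrade to $C^\infty$ using the iterated Gateaux derivatives of Proposition~\ref{prop_du=v}.

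\textbf{Existence.} First I would take $U(t)$ defined by \eqref{U} as the candidate solution. The initial condition is immediate: since $e^{m(\nu_k)\cdot 0}=1$, the series collapses to $\sum_k\hat f_k e^{i\nu_k\cdot x}=f$, so $U(0)=f$. To verify the evolution equation I would chain two facts already proved: Proposition~\ref{prop_du=v} with $N=0$ gives $\frac{d}{dt}U(t)=V(t)$ in the Gateaux sense, while Lemma~\ref{lem_lu=v} gives $\Ldel U(t)=V(t)$; together these yield $\frac{dU}{dt}=\Ldel U(t)$. That $U(t)$ actually lands in $H^p(T^n)$ for $t>0$ is exactly the content of Theorem~\ref{spatial_regularity}.

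\textbf{Uniqueness.} For uniqueness I would argue mode by mode. Suppose $U_1,U_2$ both solve \eqref{gateaux_nonlocal_diffusion_eqn} and set $W=U_1-U_2$, so that $\frac{dW}{dt}=\Ldel W$ with $W(0)=0$. For each fixed $k$ the functional $g\mapsto\hat g_k$ is bounded on $H^q(T^n)$, since $(1+\|k\|^2)^q|\hat g_k|^2\le\|g\|_{H^q(T^n)}^2$, hence it commutes with the Gateaux limit. Therefore each coefficient $\hat W_k(t)$ is classically differentiable, with $\frac{d}{dt}\hat W_k(t)$ equal to the $k$-th coefficient of $\Ldel W(t)$, which by \eqref{LJ} is $m(\nu_k)\hat W_k(t)$. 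Thus $\hat W_k$ solves the scalar linear ODE $\hat W_k'(t)=m(\nu_k)\hat W_k(t)$ with $\hat W_k(0)=0$, forcing $\hat W_k\equiv0$ for every $k$, and hence $W\equiv0$.

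\textbf{Smoothness.} For the $C^\infty$ claim I would iterate Proposition~\ref{prop_du=v}: it shows every Gateaux time-derivative of $U$ exists and equals the explicit series $U^{(N)}(t)=\sum_k\hat f_k m(\nu_k)^N e^{m(\nu_k)t}e^{i\nu_k\cdot x}$, with $\frac{d}{dt}U^{(N)}=U^{(N+1)}$. By the remark following that proposition, together with the estimates used in Theorem~\ref{spatial_regularity}, each $U^{(N)}(t)$ lies in $H^p(T^n)$ for $t>0$. It then remains to check that $t\mapsto U^{(N)}(t)$ is continuous into $H^p(T^n)$, which I would obtain from a dominated-convergence argument on
\[
\|U^{(N)}(t)-U^{(N)}(t_0)\|_{H^p(T^n)}^2=\sum_k(1+\|k\|^2)^p|\hat f_k|^2\,m(\nu_k)^{2N}\bigl|e^{m(\nu_k)t}-e^{m(\nu_k)t_0}\bigr|^2,
\]
where on a closed subinterval bounded away from $0$ the factor $(1+\|k\|^2)^p m(\nu_k)^{2N}e^{2m(\nu_k)t}$ is dominated uniformly in $t$ exactly as in the earlier proofs. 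Continuity of all derivatives then gives $U\in C^\infty((0,\infty);H^p(T^n))$.

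\textbf{Main obstacle.} The delicate case throughout is $\beta=n$, where the multiplier grows only logarithmically, $m(\nu_k)\sim-\frac{4n}{\delta^2}\log\|\nu_k\|$, so the smoothing is weak and the index $p=s+\frac{4nt}{\delta^2}(1-\epsilon_1)$ itself grows with $t$. The main work is to confirm that the logarithmic factors $m(\nu_k)^{2N}$ coming from the time derivatives never overwhelm the power gain $e^{2m(\nu_k)t}\sim\|\nu_k\|^{-8nt(1-\epsilon_1)/\delta^2}$, and to carry out the continuity/domination on a subinterval $[t_1,t_2]\subset(0,\infty)$ using the conservative index $p(t_1)$ so that the $t$-dependence of $p$ does not spoil the uniform bound, precisely via the estimates \eqref{asymptotic_inequality}--\eqref{log-upperbound} already invoked above.
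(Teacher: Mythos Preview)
Your proposal is correct and follows essentially the same approach as the paper: existence via Lemma~\ref{lem_lu=v} and Proposition~\ref{prop_du=v} with $N=0$, uniqueness by reducing to scalar ODEs for the Fourier coefficients of the difference, and regularity by invoking Theorem~\ref{spatial_regularity} together with the iterated derivatives of Proposition~\ref{prop_du=v}. If anything you are more careful than the paper, which does not spell out the commutation of the Fourier-coefficient functional with the Gateaux limit, the continuity-in-$t$ step, or the $\beta=n$ subtlety you flag.
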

\begin{remark}
 The time regularity in Theorem~\ref{exist_uniq} is in the sense of Gateaux differentiation.
\end{remark}
\begin{proof}
    The existence follows from Lemma \ref{lem_lu=v} and Proposition \ref{prop_du=v} by taking $N=0$. For the uniqueness, let $U_2(t)$ be another solution of \eqref{gateaux_nonlocal_diffusion_eqn}. We define $W(t)=U(t)-U_2(t)$. Then, $W(t)$ satisfies
$$
\begin{cases}
	\displaystyle\frac{dW}{dt} = \Ldel W,\\
	W(0)=0.
\end{cases}
$$
Represent $W(t)$ by its Fourier series
\begin{eqnarray*}
    W(t)=\sum_{k\in\Zn}\hat{W}_k(t)e^{i\nu_k\cdot x}.
\end{eqnarray*}
Lemma \ref{lem_lu=v} implies that
\[
\Ldel W(t) = \sum_{k\in\Zn} m(\nu_k)\hat{W}_k(t)e^{i\nu_k\cdot x},
\]
and
\[
\frac{dW}{dt}=\sum_{k\in\Zn} \frac{d\hat{W}_k}{dt}(t) e^{i\nu_k\cdot x}.
\]
From \eqref{gateaux_nonlocal_diffusion_eqn} and the uniqueness of Fourier coefficients, we have that
\[
    \frac{d\hat{W}_k}{dt}(t) = m(\nu_k)\hat{W}_k(t),
\]
for all $k$. This implies that $\hat{W}_k(t) = Ae^{m(\nu_k)t}$, where $A$ is a constant. Since $\hat{W}_k(0)=0$, then $\hat{W}_k(t) = 0$, for all $k$ which implies that $W(t)=0$. Therefore, $U(t)=U_2(t).$ The spatial regularity follows from Theorem \ref{spatial_regularity}.
\end{proof}

\subsection{Regular functions as solutions of the nonlocal diffusion equation}\label{regular_fns_as_sol}
In this section, we  focus on functions $f$ with absolutely summable Fourier coefficients, that is, $\underset{k\in\Zn}{\sum}|\hat{f}_k|<\infty$. The following theorem gives a class of functions that satisfy this condition, see \cite{grafakos2008classical}.

\begin{theorem}
    Let $s$ be a non-negative integer and let $0\le\alpha<1$. Assume that $f$ is a function defined on $T^n$ all of whose partial derivatives of order $s$ lie in the space of Holder continuous functions of order $\alpha$. Suppose that $s+\alpha>n/2$. Then $f$ has an absolutely convergent Fourier series.
\end{theorem}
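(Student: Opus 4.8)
The plan is to reduce the statement to a fractional Sobolev membership and then close with the Cauchy--Schwarz inequality. Concretely, I would aim to prove that $f\in H^{s+\alpha'}(T^n)$ for some $\alpha'\in(0,\alpha)$ chosen so that $s+\alpha'>n/2$; such an $\alpha'$ exists precisely because the hypothesis $s+\alpha>n/2$ is strict. Granting this, write for $k\neq 0$
\[
|\hat f_k| = \|k\|^{-(s+\alpha')}\cdot \|k\|^{s+\alpha'}|\hat f_k|,
\]
and apply Cauchy--Schwarz to obtain
\[
\sum_{0\ne k\in\Zn}|\hat f_k| \le \Big(\sum_{0\ne k\in\Zn}\|k\|^{-2(s+\alpha')}\Big)^{1/2}\Big(\sum_{0\ne k\in\Zn}\|k\|^{2(s+\alpha')}|\hat f_k|^2\Big)^{1/2}.
\]
The second factor is finite by the membership $f\in H^{s+\alpha'}(T^n)$, while the first factor is a $p$-series over $\Zn\setminus\{0\}$ with exponent $2(s+\alpha')>n$, hence convergent. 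Adding the single term $|\hat f_0|$ then yields $\sum_k|\hat f_k|<\infty$.

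The substantive work is establishing $f\in H^{s+\alpha'}(T^n)$. First, since every partial derivative of order $s$ is H\"older continuous, it is in particular continuous and therefore lies in $L^2(T^n)$; integrating by parts $s$ times gives $\widehat{\partial^\gamma f}_k=(i\nu_k)^\gamma\hat f_k$ for each multi-index $\gamma$ with $|\gamma|=s$, and Parseval's identity applied to $\partial^\gamma f$ shows $\sum_k|\nu_k^\gamma|^2|\hat f_k|^2<\infty$. Summing over all such $\gamma$ and using that $\sum_{|\gamma|=s}|\nu_k^\gamma|^2$ is comparable to $\|\nu_k\|^{2s}$, together with the comparability $A\|k\|\le\|\nu_k\|\le B\|k\|$ already used in the paper, this gives $f\in H^s(T^n)$. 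To gain the extra fractional smoothness $\alpha'$, I would exploit H\"older continuity through a translation argument: for a fixed top-order derivative $g=\partial^\gamma f$, Parseval applied to the difference $g(\cdot+h)-g(\cdot)$ gives
\[
\sum_k |e^{i\nu_k\cdot h}-1|^2|\hat g_k|^2 = \frac{1}{|T^n|}\int_{T^n}|g(x+h)-g(x)|^2\,dx \le C\|h\|^{2\alpha},
\]
for all small $h$, by the H\"older bound. Integrating this inequality over $h$ in a ball $B_\rho(0)$ and interchanging sum and integral (Tonelli), the factor $\int_{B_\rho(0)}|e^{i\nu_k\cdot h}-1|^2\,dh$ is bounded below by a fixed multiple of $|B_\rho(0)|$ once $\|\nu_k\|\rho$ exceeds a fixed threshold, which yields the tail estimate $\sum_{\|\nu_k\|\ge c/\rho}|\hat g_k|^2\lesssim\rho^{2\alpha}$. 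Taking $\rho=2^{-j}$ and summing the resulting dyadic-shell bounds shows $g\in H^{\alpha'}(T^n)$ for every $\alpha'<\alpha$, and hence $f\in H^{s+\alpha'}(T^n)$.

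The main obstacle is precisely this fractional gain. The naive pointwise bound $|\hat f_k|\lesssim\|k\|^{-(s+\alpha)}$, obtained by combining integration by parts with the classical decay of Fourier coefficients of H\"older functions, only yields convergence of $\sum_k|\hat f_k|$ under the stronger condition $s+\alpha>n$, since the lattice-point count in a shell of radius $\|k\|\sim 2^j$ grows like $2^{jn}$. Reaching the sharp threshold $n/2$ requires the $L^2$-averaged (Sobolev) form of the smoothness rather than the pointwise decay, and one must accept the loss from $\alpha$ to an arbitrary $\alpha'<\alpha$: the embedding $C^{0,\alpha}\hookrightarrow H^{\alpha}$ fails, but $C^{0,\alpha}\hookrightarrow H^{\alpha'}$ holds for $\alpha'<\alpha$. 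The strict inequality in the hypothesis is exactly what leaves room to absorb this loss while keeping $2(s+\alpha')>n$, so that both factors in the final Cauchy--Schwarz estimate remain finite. A minor technical point to handle carefully is the lower bound on $\int_{B_\rho(0)}|e^{i\nu_k\cdot h}-1|^2\,dh$, where one separates the constant term from the oscillatory integral $\int_{B_\rho(0)}\cos(\nu_k\cdot h)\,dh$ and controls the latter for $\|\nu_k\|\rho$ large.
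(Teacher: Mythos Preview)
The paper does not prove this statement; it is quoted from Grafakos, \textit{Classical Fourier Analysis}, as background, with no argument given. Your outline is a correct proof and is essentially the standard one (and the one Grafakos presents): translate the H\"older hypothesis on each $\partial^\gamma f$ into the $\ell^2$ tail bound $\sum_{\|k\|\gtrsim 2^{j}}|\widehat{\partial^\gamma f}_k|^2\lesssim 2^{-2j\alpha}$ via Parseval applied to finite differences, sum dyadically to obtain $f\in H^{s+\alpha'}(T^n)$ for every $\alpha'<\alpha$, and finish with Cauchy--Schwarz using $2(s+\alpha')>n$. Your remark that $C^{0,\alpha}\hookrightarrow H^{\alpha'}$ holds only for $\alpha'<\alpha$, and that the strict inequality $s+\alpha>n/2$ is precisely what leaves room to absorb this loss, is exactly the point of the argument.
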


\noindent Next we provide results on the temporal regularity of the nonlocal diffusion equation.

\begin{prop}\label{temporal_regularity}
Let $f\in H^s(T^n)$ such that $\underset{k\in\Zn}{\sum}|\hat{f}_k|<\infty$ and let $\beta< n+4$. Then, 
$$u(x,\cdot)\in C^\infty((0,\infty)),$$ 
for all $x\in T^n.$
\end{prop}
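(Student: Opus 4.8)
The plan is to fix $x\in T^n$ and regard $u(x,\cdot)$ as the sum of the Fourier series coming from $U(t)$ in \eqref{U},
\[
    u(x,t) = \sum_{k\in\Zn} \hat{f}_k\, e^{m(\nu_k)t}\, e^{i\nu_k\cdot x},
\]
and to establish $C^\infty$ regularity in $t$ by differentiating this series term by term. Formally,
\[
    \frac{\partial^N}{\partial t^N} u(x,t) = \sum_{k\in\Zn} \hat{f}_k\, m(\nu_k)^N e^{m(\nu_k)t}\, e^{i\nu_k\cdot x},
\]
so the entire proof reduces to justifying this interchange of differentiation and summation. The standard criterion is to show that, for every $N\ge 0$ and every compact subinterval $[t_0,t_1]\subset(0,\infty)$, the series on the right converges uniformly in $t$; then an induction on $N$, together with the uniform convergence of the $N=0$ series (immediate from $\sum_k|\hat f_k|<\infty$ and $e^{m(\nu_k)t}\le 1$), yields that $u(x,\cdot)$ is $C^\infty$ on $(0,\infty)$ with the stated derivatives.

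Since $|e^{i\nu_k\cdot x}|=1$, the hypothesis $\sum_k|\hat{f}_k|<\infty$ lets me apply the Weierstrass $M$-test once the scalar factor $|m(\nu_k)|^N e^{m(\nu_k)t}$ is bounded uniformly in $k$ and in $t\in[t_0,t_1]$. Because $m(\nu_k)\le 0$, for $t\ge t_0$ one has $e^{m(\nu_k)t}\le e^{m(\nu_k)t_0}$, so it suffices to produce, for each $N$ and each $t_0>0$, a constant $M_N(t_0)$ with
\[
    \sup_{k\in\Zn} |m(\nu_k)|^N e^{m(\nu_k)t_0} \le M_N(t_0) < \infty;
\]
the dominating series is then $M_N(t_0)\sum_k|\hat{f}_k|$. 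I would verify this supremum bound in the three regimes of $\beta$ using Theorem~\ref{thm:asymptotic_behavior}. For $\beta<n$ the multipliers are bounded, say $|m(\nu_k)|\le C$, so $|m(\nu_k)|^N e^{m(\nu_k)t_0}\le C^N$ directly. For $n<\beta<n+4$ one has $m(\nu_k)\to-\infty$, so $e^{m(\nu_k)t_0}$ decays faster than any power of $|m(\nu_k)|$ grows and the product tends to $0$, hence is bounded.

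The delicate case, and the one I expect to be the main obstacle, is $\beta=n$. Here $|m(\nu_k)|$ is unbounded, growing like $\tfrac{4n}{\delta^2}\log\|\nu_k\|$ by Theorem~\ref{thm:asymptotic_behavior}, so $|m(\nu_k)|^N$ itself is not bounded and the $N=0$ argument does not extend verbatim. The resolution is that for strictly positive $t_0$ the exponential supplies genuine polynomial decay: fixing any $\epsilon_1\in(0,1)$, the estimates \eqref{asymptotic_inequality} and \eqref{eq:exp_estimate} give $e^{m(\nu_k)t_0}\le \|\nu_k\|^{-\frac{4nt_0}{\delta^2}(1-\epsilon_1)}$ for large $\|\nu_k\|$, while $|m(\nu_k)|^N$ grows only like a power of $\log\|\nu_k\|$. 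Since a logarithmic factor is dominated by any positive power of $\|\nu_k\|$, the product $|m(\nu_k)|^N\|\nu_k\|^{-\frac{4nt_0}{\delta^2}(1-\epsilon_1)}$ tends to $0$ as $\|\nu_k\|\to\infty$ and is therefore bounded by some $M_N(t_0)$. This is exactly where the restriction to $(0,\infty)$ rather than $[0,\infty)$ enters: at $t_0=0$ the factor $e^{m(\nu_k)t_0}$ equals $1$ and cannot absorb the growth of $|m(\nu_k)|^N$, so the differentiated series need not converge at $t=0$. Assembling the three cases gives the uniform bound on every $[t_0,t_1]\subset(0,\infty)$, which justifies term-by-term differentiation to all orders and hence establishes the claim.
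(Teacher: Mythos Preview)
Your argument is correct and is essentially the same as the paper's: fix $t_0>0$, dominate $|m(\nu_k)|^N e^{m(\nu_k)t}$ by $|m(\nu_k)|^N e^{m(\nu_k)t_0}$, show this is bounded in $k$, and invoke $\sum_k|\hat f_k|<\infty$ via the Weierstrass $M$-test. The only difference is that the paper handles $\beta=n$ and $n<\beta<n+4$ together by simply noting that $m(\nu_k)\to-\infty$ forces $|m(\nu_k)|^N e^{m(\nu_k)t_0}\to 0$, so your separate treatment of $\beta=n$ using the detailed logarithmic asymptotics is unnecessary (though not incorrect).
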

\begin{proof}
    We  use the Leibniz integral rule for  the counting measure to differentiate under the summation. Let $g_k(t)=\hat{f}_ke^{m(\nu_k)t}e^{i\nu_k\cdot x}$ and consider
    \begin{eqnarray*}
        \sum_{k\in \Zn}|g_k(t)|&=&\sum_{k\in\Zn} |\hat{f}_ke^{m(\nu_k)t}e^{i\nu_k\cdot x}|\\
        &=&\sum_{k\in\Zn} |\hat{f}_k|e^{m(\nu_k)t}\\
        &\le&\sum_{k\in\Zn}|\hat{f}_k|.
    \end{eqnarray*}
    Since $\underset{k\in\Zn}{\sum}|\hat{f}_k|<\infty$, then $g_k(t)$ is summable for any fixed $t$. Moreover, 
    \[
    \frac{dg_k}{dt}=\hat{f}_km(\nu_k)e^{m(\nu_k)t}e^{i\nu_k\cdot x},
    \]
    is continuous for all $k$. Now fix $t>0$, then there exist $\tau$ such that $0<\tau<t$. We define $\theta_k:=|\hat{f}_k||m(\nu_k)|e^{m(\nu_k)\tau}$. When $\beta<n$, then there exists $C>0$ such that $|m(\nu_k)|\le C$. Thus
    \[
        \theta_k=|\hat{f}_k||m(\nu_k)|e^{m(\nu_k)\tau}\le C|\hat{f}_k|,
    \]
    showing that $\theta_k$ is summable. When $\beta\ge n$, then $e^{m(\nu_k)\tau}\to 0$ as $\|k\|\to \infty$. Thus $|m(\nu_k)|e^{m(\nu_k)\tau}\le 1$ for sufficiently large $\|k\|$. Hence $\theta_k\le |\hat{f}_k|$, showing that $\theta_k$ is summable. Moreover
    \begin{eqnarray*}
        \left|\frac{dg_k}{dt}\right|&=& |\hat{f}_k||m(\nu_k)|e^{m(\nu_k)t}\\
        &\le& |\hat{f}_k||m(\nu_k)|e^{m(\nu_k)\tau}\\
        &=&\theta_k. 
    \end{eqnarray*}
    Therefore, 
    %by Leibniz integral rule,
    we can differentiate under the summation,
    \begin{eqnarray*}
        \frac{\partial u(x,t)}{\partial t}&=&\frac{d}{dt}\sum_{k\in\Zn}g_k(t)\\
        &=&\sum_{k\in\Zn}\frac{d g_k}{dt}\\
        &=&\sum_{k\in\Zn}\hat{f}_km(\nu_k)e^{m(\nu_k)t}e^{i\nu_k\cdot x}.
    \end{eqnarray*}
    For higher derivatives, we observe that 
    \[
        \frac{d^N g_k}{dt^N}=\hat{f}_k|m(\nu_k)|^Ne^{m(\nu_k)t}e^{i\nu_k\cdot x}.
    \]
    Define $\theta_k=|\hat{f}_k||m(\nu_k)|^Ne^{m(\nu_k)\tau}$. Then $\theta_k$ is summable by following similar arguments as above. Furthermore,
    \begin{eqnarray*}
        \left|\frac{d^N g_k}{dt^N}\right|&=&|\hat{f}_k||m(\nu_k)|^Ne^{m(\nu_k)t}\\
        &\le&|\hat{f}_k||m(\nu_k)|^Ne^{m(\nu_k)\tau}\\
        &=&\theta_k.
    \end{eqnarray*}
    This implies that $u(x,\cdot)$ is N-times continuously differentiable and 
    \begin{eqnarray*}
        \frac{\partial^N u(x,t)}{\partial t^N}&=&\frac{d^N}{dt^N}\sum_{k\in\Zn}g_k(t)\\
        &=&\sum_{k\in\Zn}\frac{d^N g_k}{dt^N}\\
        &=&\sum_{k\in\Zn}\hat{f}_k|m(\nu_k)|^Ne^{m(\nu_k)t}e^{i\nu_k\cdot x}.
    \end{eqnarray*}
    Since $N$ is arbitrary, it follows that $u(x,\cdot)\in C^\infty((0,\infty))$.
\end{proof}
From Theorem \ref{spatial_regularity} and Proposition \ref{temporal_regularity} we obtain the following regularity result.
\begin{theorem}
\label{thm:regularity1}
Let $n\ge 1,~\delta>0$, $\epsilon>0$ and $\beta< n+4$. Assume that  $f\in H^s(T^n)$ and its Fourier coefficients are summable. Then,
\begin{enumerate}
	\item $u \in C^\infty{((0,\infty);H^s(T^n))}$ for $\beta < n$,
	\item $u\in
	C^\infty\left((0,\infty);H^{s+\frac{4nt}{\delta^2}(1-\epsilon)}(T^n)\right)$ for $\beta=n$, 
	\item $u \in C^\infty{((0,\infty);H^\infty(T^n))}$ for $\beta>n$.
\end{enumerate}
\end{theorem}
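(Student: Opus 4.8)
The plan is to combine the spatial estimate of Theorem~\ref{spatial_regularity} with a strengthened, norm-level version of the temporal argument behind Proposition~\ref{temporal_regularity} and Proposition~\ref{prop_du=v}. Concretely, to assert $u\in C^\infty((0,\infty);H^p(T^n))$ I must establish two things: (i) for each fixed $t>0$ and each $N\ge 0$ the formal termwise time derivative
\[
    U^{(N)}(t)=\sum_{k\in\Zn}\hat{f}_k\, m(\nu_k)^N e^{m(\nu_k)t}e^{i\nu_k\cdot x}
\]
lies in $H^p(T^n)$ with $p$ given by \eqref{p_values}; and (ii) that $U^{(N+1)}(t)$ is the \emph{strong} ($H^p$-norm) derivative of $U^{(N)}(t)$ and that $t\mapsto U^{(N)}(t)$ is norm-continuous. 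Since $U=U^{(0)}$ and $V=U^{(1)}$, the case $N=0$ of (i) is exactly Theorem~\ref{spatial_regularity}.

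For (i) I would rerun the three-case estimate of Theorem~\ref{spatial_regularity} on
\[
    \|U^{(N)}(t)\|_{H^p(T^n)}^2=\sum_{0\ne k\in\Zn}(1+\|k\|^2)^{p-s}(1+\|k\|^2)^s|\hat{f}_k|^2\,|m(\nu_k)|^{2N}e^{2m(\nu_k)t},
\]
the only new feature being the polynomial-in-$m$ factor $|m(\nu_k)|^{2N}$. This factor is harmless in every regime: for $\beta<n$ the multiplier is bounded and $p=s$; for $n<\beta<n+4$ the factor $e^{2m(\nu_k)t}$ decays faster than any power of $\|k\|$ (as $m(\nu_k)\to-\infty$), absorbing $|m(\nu_k)|^{2N}$ and placing $U^{(N)}(t)$ in $H^\infty(T^n)$; and for $\beta=n$ the asymptotics $m(\nu_k)\sim-\tfrac{4n}{\delta^2}\log\|\nu_k\|$ from Theorem~\ref{thm:asymptotic_behavior} make $|m(\nu_k)|^{2N}$ merely logarithmic, so it is dominated by an arbitrarily small power of $\|\nu_k\|$ exactly as in \eqref{log-upperbound}, which is why the loss $\epsilon$ appears in $p=s+\tfrac{4nt}{\delta^2}(1-\epsilon)$.

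For (ii) I would show, via the dominated convergence theorem on the counting measure,
\[
    \Big\|\tfrac1h\big(U^{(N)}(t+h)-U^{(N)}(t)\big)-U^{(N+1)}(t)\Big\|_{H^p(T^n)}^2=\sum_{k\in\Zn}(1+\|k\|^2)^p|\hat{f}_k|^2|m(\nu_k)|^{2N}e^{2m(\nu_k)t}\Big|\tfrac{e^{m(\nu_k)h}-1}{h}-m(\nu_k)\Big|^2\longrightarrow 0
\]
as $h\to0$. Each summand tends to $0$ because $\tfrac{e^{m(\nu_k)h}-1}{h}\to m(\nu_k)$, and an $h$-uniform summable dominating function is produced precisely as in the proof of Proposition~\ref{prop_du=v}, using the difference-quotient bound \eqref{exp_bound} together with \eqref{asymptotic_inequality}; norm-continuity of each $U^{(N)}$ in $t$ follows by the same domination. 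Because $t$ is a single real parameter, the Gateaux derivative computed in Proposition~\ref{prop_du=v} coincides with this strong derivative, so iterating gives $U\in C^\infty$ into $H^p$.

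The step I expect to be the main obstacle is the borderline case $\beta=n$, where the logarithmic growth of the multiplier forces the target space $H^{p(t)}$, with $p(t)=s+\tfrac{4nt}{\delta^2}(1-\epsilon)$, to depend on $t$. I would resolve this by fixing a compact subinterval $[a,b]\subset(0,\infty)$, working in the single fixed space $H^{p(a)}$ (legitimate since $p(t)\ge p(a)$, hence $H^{p(t)}(T^n)\subset H^{p(a)}(T^n)$ for $t\ge a$), and using the two-exponent log-absorption device ($\epsilon_1<\epsilon_2$) already exploited in Theorem~\ref{spatial_regularity} to dominate the extra $(\log\|\nu_k\|)^{2N}$ factors uniformly in $h$ and in $t\in[a,b]$. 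The remaining cases are routine once these dominating bounds are recorded.
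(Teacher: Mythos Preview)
Your proposal is correct and follows the same strategy as the paper, which simply states that the result follows from Theorem~\ref{spatial_regularity} and Proposition~\ref{temporal_regularity}. In fact you are considerably more careful than the paper's one-line argument: you explicitly upgrade the pointwise-in-$x$ temporal smoothness of Proposition~\ref{temporal_regularity} and the Gateaux differentiability of Proposition~\ref{prop_du=v} to strong $H^p$-norm differentiability, and you address the time-dependence of the target space in the borderline case $\beta=n$ via the compact-interval and two-exponent ($\epsilon_1<\epsilon_2$) device, a point the paper leaves implicit.
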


%\textcolor{red}{The following result will be used to prove ...}
The following lemma will be used to prove Theorem~\ref{convergence_NHE_Without_diffusionSource1} on the convergence of solutions of the nonlocal diffusion equation as $\delta\rightarrow 0^+$.
\begin{lem} \label{sec3.2_lem2}
	Let $n<\beta<n+2$ and $\delta\le 1$. Then, there exist $c_1>0$ and $c_2>0$ such that for all $\nu\in\bbR^n$,
	\begin{eqnarray*}
	     m^{\delta,\beta}(\nu) \le \max\{-c_1 \|\nu \|^{\beta-n}, -c_2 \|\nu\|^2\}.
	\end{eqnarray*}
\end{lem}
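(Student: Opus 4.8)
The claim is equivalent, after negating, to the lower bound
\[
-\mdel(\nu)\ \ge\ \min\bigl\{c_1\|\nu\|^{\beta-n},\,c_2\|\nu\|^2\bigr\},
\]
since $-\max\{-a,-b\}=\min\{a,b\}$; the case $\nu=0$ is trivial, so assume $\nu\ne 0$. My plan is to extract the two competing scalings $\|\nu\|^2$ (small frequencies) and $\|\nu\|^{\beta-n}$ (large frequencies) directly from the integral representation \eqref{multipliers}. Using $1-\cos(\nu\cdot z)\ge 0$, I would write
\[
-\mdel(\nu)=\cdel\int_{B_\delta(0)}\frac{1-\cos(\nu\cdot z)}{\|z\|^\beta}\,dz,
\]
and rescale by the substitution $y=\|\nu\|z$. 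This turns the horizon ball $B_\delta(0)$ into $B_{\|\nu\|\delta}(0)$ and pulls out the expected power, giving
\[
-\mdel(\nu)=\cdel\,\|\nu\|^{\beta-n}\,I(\|\nu\|\delta),\qquad I(R):=\int_{B_R(0)}\frac{1-\cos(e\cdot y)}{\|y\|^\beta}\,dy,
\]
where $e=\nu/\|\nu\|$; since $B_R(0)$ is rotationally symmetric, $I(R)$ is independent of the direction $e$.

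The heart of the argument is the single elementary estimate
\[
I(R)\ \ge\ c\,\min\bigl\{1,\,R^{\,n+2-\beta}\bigr\}\qquad\text{for all }R>0,
\]
for some $c>0$. To prove it I would observe that $I$ is continuous, strictly positive, and increasing, and then examine its two endpoint limits. As $R\to 0^+$, Taylor expanding the cosine together with the symmetry identity $\int_{B_R}(e\cdot y)^2\|y\|^{-\beta}\,dy=\tfrac1n\int_{B_R}\|y\|^{2-\beta}\,dy$ gives $I(R)\sim c'\,R^{\,n+2-\beta}$, the integral converging precisely because $\beta<n+2$. As $R\to\infty$, $I(R)$ increases to $I(\infty)=\int_{\Rn}(1-\cos(e\cdot y))\|y\|^{-\beta}\,dy$, which is finite because the integrand is $O(\|y\|^{2-\beta})$ near the origin ($\beta<n+2$) and $O(\|y\|^{-\beta})$ at infinity ($\beta>n$). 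Consequently the ratio $R\mapsto I(R)/\min\{1,R^{\,n+2-\beta}\}$ is continuous and positive on $(0,\infty)$ with positive, finite limits at both endpoints, hence is bounded below by a positive constant $c$.

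Finally I would assemble the pieces, splitting on the sign of $\|\nu\|\delta-1$. Because $n+2-\beta>0$ and $\delta\le 1$, the scaling constant satisfies $\cdel\ge C_0:=2(n+2-\beta)\Gamma(\tfrac n2+1)/\pi^{n/2}$, while $\cdel\,\delta^{\,n+2-\beta}=C_0$ exactly. When $\|\nu\|\delta\ge 1$ the minimum equals $1$, yielding $-\mdel(\nu)\ge \cdel\,c\,\|\nu\|^{\beta-n}\ge C_0 c\,\|\nu\|^{\beta-n}$; when $\|\nu\|\delta<1$ the minimum equals $(\|\nu\|\delta)^{\,n+2-\beta}$, yielding $-\mdel(\nu)\ge \cdel\,c\,\delta^{\,n+2-\beta}\|\nu\|^2=C_0 c\,\|\nu\|^2$. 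Setting $c_1=c_2=C_0 c$, both independent of $\delta$, gives $-\mdel(\nu)\ge\min\{c_1\|\nu\|^{\beta-n},c_2\|\nu\|^2\}$ in either case, which is the claim; this also clarifies why the hypothesis $\delta\le 1$ is what keeps the constants uniform. The only genuinely delicate step is the uniform lower bound on $I(R)$: the two limit computations are routine, but one must verify that both endpoint limits are strictly positive and finite so the continuity/compactness conclusion applies, and it is the rescaling $y=\|\nu\|z$ that makes the resulting constants uniform in $\delta$.
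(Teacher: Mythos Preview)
Your argument is correct and, once unpacked, runs parallel to the paper's proof. Both reduce the two-parameter problem $(\delta,\nu)$ to a single scale variable: the paper uses the identity $\mdel(\nu)=\delta^{-2}m^{1,\beta}(\delta\nu)$ to reduce to $\delta=1$ and then invokes the asymptotic formulas of Theorem~\ref{thm:asymptotic_behavior} (for $\|\nu\|\to\infty$) together with a cited limit $m^{1,\beta}(\nu)/(-\|\nu\|^2)\to 1$ (for $\|\nu\|\to 0$) to obtain $m^{1,\beta}(\nu)\le\max\{-c_1\|\nu\|^{\beta-n},-c_2\|\nu\|^2\}$, after which the $\delta\le 1$ hypothesis transfers this bound to general $\delta$. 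Your rescaling $y=\|\nu\|z$ accomplishes the same reduction---indeed $I(R)$ equals $-m^{1,\beta}(Re)/(c^{1,\beta}R^{\beta-n})$ for any unit vector $e$, so your key inequality $I(R)\ge c\min\{1,R^{n+2-\beta}\}$ is exactly the paper's $\delta=1$ estimate in disguise. The genuine difference is that you derive the two endpoint asymptotics directly from the integral (Taylor expansion near $R=0$, dominated convergence as $R\to\infty$) rather than citing the hypergeometric asymptotics and an external reference; this makes your proof self-contained and slightly more transparent about where each exponent comes from, at the cost of redoing calculations the paper already has available.
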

\begin{proof}
From Theorem \ref{thm:asymptotic_behavior}, we have
$$m^{1,\beta}(\nu)\sim c\|\nu\|^{\beta-n},$$
where
$$c=(2)2^{n+2-\beta}\frac{\Gamma\left(\frac{n+4-\beta}{2}\right)\Gamma\left(\frac{n+2}{2}\right)}{(\beta-n)\Gamma\left(\frac{\beta}{2}\right)}>0.$$
This is equivalent to 
\begin{eqnarray*}
    \lim_{\|\nu\|\to\infty}\frac{m^{1,\beta}(\nu)}{-\|\nu\|^{\beta-n}}=c,
\end{eqnarray*}
which implies that there is $c_1>0$ and $N>0$ such that for all $\|\nu\|>N$
\begin{eqnarray}
    m^{1,\beta}(\nu)\le-c_1\|\nu\|^{\beta-n}.\label{lem1_1}
\end{eqnarray}
On the other hand, from \cite{alali2020fourier}, we have 
\begin{eqnarray*}
    \lim_{\|\nu\|\to 0}\frac{m^{1,\beta}(\nu)}{-\|\nu\|^{\beta-n}}=1.
\end{eqnarray*}
Thus, there exists $c_2>0$ such that for all $\|\nu\|<N,$
\begin{eqnarray}
    m^{1,\beta}(\nu)\le-c_2\|\nu\|^2.\label{lem1_2}
\end{eqnarray}
Combining \eqref{lem1_1} and \eqref{lem1_2}, we have 
\begin{eqnarray}
    m^{1,\beta}(\nu)\le \max\{-c_1\|\nu\|^{\beta-n},-c_2\|\nu\|^2\}\label{lem1_3},
\end{eqnarray}
for all $\nu\in\Rn$. Using \eqref{lem1_3} and the fact that $m^{\delta,\beta}=\frac{1}{\delta^2}m^{1,\beta}(\delta\nu)$, we obtain
\begin{eqnarray*}
    m^{\delta,\beta}=\frac{1}{\delta^2}m^{1,\beta}(\delta\nu)&\le&\frac{1}{\delta^2}\max\{-c_1\|\delta\nu\|^{\beta-n},-c_2\|\delta\nu\|^2\}\\
    &=&\max\{-c_1\|\nu\|^{\beta-n}\delta^{\beta-(n+2)},-c_2\|\nu\|^2\}.
\end{eqnarray*}
Since $\delta\le 1$, then $-\delta^{\beta-(n+2)}\le -1$ and hence 
$$m^{\delta,\beta}(\nu)\le\max\{-c_1\|\nu\|^{\beta-n},-c_2\|\nu\|^2\}.$$
\end{proof}
%\begin{lem} \label{sec3.2_lem2}
%	There exist $C_1>0$ and $C_2>0$ such that 
%	\begin{eqnarray*}
%	    -C_1 \|k \|^{\beta-n} \le m(\nu_k) \le -C_2 \|k \|^{\beta-n},~~n<\beta <n+2.
%	\end{eqnarray*}
%\end{lem}

%\begin{lem} \label{sec3.2_lem3}
%	For all $k$ and $\delta,$ $m(\nu_k)+ \|k \|^2 \ge 0$ 
%\end{lem}

Convergence of solutions of the nonlocal diffusion equation \eqref{eq:nonlocal_diffusion_nosource} to the solution of the corresponding classical diffusion equation is given next in Theorem~\ref{convergence_NHE_Without_diffusionSource1} and Theorem~\ref{convergence_NHE_Without_diffusionSource2}.
%, in the limit as $\delta\to 0$ or in the limit as $\beta\to n+2$, respectively.
\begin{theorem}\label{convergence_NHE_Without_diffusionSource1}
	Let $n \ge 1, s\in\bbR$ and let $f \in H^s(T^n)$. Suppose $u$ is the solution of the classical diffusion equation $u_t = \Delta u$ with initial condition $ u|_{t=0}=f$. For any $\delta>0$, let $u^{\delta, \beta}$ be the solution of the nonlocal diffusion equation in \eqref{eq:nonlocal_diffusion_nosource}. Then, for $t>0$ and $\beta\le n$,
	\begin{eqnarray*}
		\lim_{\delta \to 0^+}u^{\delta, \beta}(\cdot,t)=u(\cdot,t),~~ \text{ in } H^s(T^n),
	\end{eqnarray*}
	and for $n<\beta\le n+2$,
	\begin{eqnarray*}
		\lim_{\delta \to 0^+}u^{\delta, \beta}(\cdot,t)=u(\cdot,t),~~ \text{ in } H^\infty(T^n).
	\end{eqnarray*}
\end{theorem}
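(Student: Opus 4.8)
The plan is to compare the two solutions coefficient-by-coefficient in the Fourier basis and then pass to the limit $\delta\to0^+$ by a dominated convergence argument for the counting measure on $\Zn$. The classical solution is $u(\cdot,t)=\sum_k\hat f_k e^{-\|\nu_k\|^2 t}e^{i\nu_k\cdot x}$, since $-\|\nu_k\|^2$ is the Fourier multiplier of $\Delta$, while by \eqref{U} the nonlocal solution is $u^{\delta,\beta}(\cdot,t)=\sum_k\hat f_k e^{m^{\delta,\beta}(\nu_k)t}e^{i\nu_k\cdot x}$. Using the Parseval identity defining the $H^q(T^n)$ norm,
\[
\|u^{\delta,\beta}(\cdot,t)-u(\cdot,t)\|_{H^q(T^n)}^2=\sum_{k\in\Zn}(1+\|k\|^2)^q|\hat f_k|^2\left|e^{m^{\delta,\beta}(\nu_k)t}-e^{-\|\nu_k\|^2 t}\right|^2,
\]
with $q=s$ in the case $\beta\le n$ and $q$ arbitrary in the case $n<\beta\le n+2$. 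Showing that each such sum tends to $0$ as $\delta\to0^+$ yields the two claimed convergences, the second of which gives convergence in $H^\infty(T^n)=\bigcap_{q\in\bbR}H^q(T^n)$.

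First I would establish pointwise (in $k$) convergence of each summand. From the hypergeometric representation \eqref{eq:multiplier-general}, for each fixed $\nu_k$ the argument $-\tfrac14\|\nu_k\|^2\delta^2\to0$ as $\delta\to0^+$, and since ${}_2F_3(\,\cdot\,;0)=1$ we obtain $m^{\delta,\beta}(\nu_k)\to-\|\nu_k\|^2$; continuity of $z\mapsto e^{zt}$ then forces each term to vanish in the limit.

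The remaining work is to produce a $\delta$-independent summable dominating function, which legitimizes interchanging limit and summation. In the case $\beta\le n$ this is immediate: because the integrand in \eqref{multipliers} is nonpositive and $c^{\delta,\beta}>0$, we have $m^{\delta,\beta}(\nu_k)\le0$, so both exponentials lie in $[0,1]$ and the summand is bounded by $4(1+\|k\|^2)^s|\hat f_k|^2$, which is summable since $f\in H^s(T^n)$. The delicate case is $n<\beta\le n+2$, where $q$ is arbitrary and the weight $(1+\|k\|^2)^q$ is no longer controlled by $f\in H^s(T^n)$ alone. Here I would split
\[
\left|e^{m^{\delta,\beta}(\nu_k)t}-e^{-\|\nu_k\|^2 t}\right|^2\le 2e^{2m^{\delta,\beta}(\nu_k)t}+2e^{-2\|\nu_k\|^2 t}.
\]
The second term decays faster than any power of $\|\nu_k\|$, so $(1+\|k\|^2)^{q-s}e^{-2\|\nu_k\|^2 t}$ is bounded for fixed $t>0$; the first term is handled by Lemma~\ref{sec3.2_lem2}, which for $\delta\le1$ gives $e^{2m^{\delta,\beta}(\nu_k)t}\le e^{-2c_1\|\nu_k\|^{\beta-n}t}+e^{-2c_2\|\nu_k\|^2 t}$, again decaying faster than any power of $\|\nu_k\|$ uniformly in $\delta\le1$. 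Since we only let $\delta\to0^+$, restricting to $\delta\le1$ is harmless, and the endpoint $\beta=n+2$ is trivial because there $m^{\delta,\beta}\equiv-\|\nu_k\|^2$ so $u^{\delta,\beta}=u$ identically. In every case the dominating function is a constant multiple of $(1+\|k\|^2)^s|\hat f_k|^2$, which is summable, and dominated convergence finishes the proof.

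The main obstacle is precisely this last uniform estimate: strengthening to $H^\infty$ convergence for $n<\beta\le n+2$ requires the exponential factor $e^{2m^{\delta,\beta}(\nu_k)t}$ to beat the arbitrarily large polynomial weight $(1+\|k\|^2)^q$, and to do so uniformly as $\delta$ shrinks. This is exactly the super-diffusive decay supplied by Lemma~\ref{sec3.2_lem2}, so the whole argument hinges on having that $\delta$-uniform bound in hand before invoking dominated convergence.
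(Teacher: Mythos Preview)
Your argument is correct and follows the same overall strategy as the paper: compare Fourier coefficients, use $m^{\delta,\beta}(\nu_k)\to-\|\nu_k\|^2$ for pointwise convergence, and invoke dominated convergence with Lemma~\ref{sec3.2_lem2} supplying the $\delta$-uniform exponential decay in the range $n<\beta<n+2$. The one place you diverge from the paper is in handling the difference of exponentials for $n<\beta\le n+2$: the paper factors $e^{m t}-e^{-\|\nu_k\|^2 t}=e^{mt}\bigl(1-e^{-(m+\|\nu_k\|^2)t}\bigr)$ and then proves, via the hypergeometric representation \eqref{eq:multiplier-general} and the inequality ${}_2F_3(\cdot;x)\le1$ for $x\le0$, that $m+\|\nu_k\|^2\ge0$, so the second factor is bounded by $1$. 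You instead use the cruder but more direct bound $|a-b|^2\le 2a^2+2b^2$, which avoids that hypergeometric argument entirely. Your route is slightly more elementary and just as effective; the paper's factorization gives a sharper pointwise bound but is not needed for the dominated convergence step.
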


\begin{proof}
The Fourier coefficients satisfy $\hat{u}_k^{\delta, \beta}= \hat{f}_ke^{m(\nu_k)t}$ and $\hat{u}_k= \hat{f}_ke^{-\|\nu_k \|^2t}$. When $\beta\le n$, then
\begin{eqnarray*}
	\| u^{\delta, \beta}(\cdot,t)-u(\cdot,t) \|^2_{H^s(T^n)}&=& \sum_{0 \ne k \in \mathbb{Z}^n}(1+ \|k \|^2)^s~ | \hat{u}_k^{\delta, \beta}-\hat{u}_k |^2 \\
	&=& \sum_{0 \ne k \in \mathbb{Z}^n}(1+ \|k \|^2)^s~ |e^{m(\nu_k)t}-e^{-\|\nu_k \|^2t} |^2 ~| \hat{f}_k |^2.
\end{eqnarray*}
To pass the limit $\delta\rightarrow 0^+$ inside the sum, it is sufficient to show that $|e^{m(\nu_k)t}-e^{-\|\nu_k \|^2t}|^2$ as a function of $k$ is uniformly bounded. %  in $\delta$.  
 Using  \eqref{multipliers}, it is straightforward to see that  $m(\nu)\le 0$ for $\nu\in\mathbb{R}^n$. Thus, 
\begin{eqnarray*}
    \left|e^{m(\nu_k)t}-e^{-\|\nu_k \|^2t}\right|^2\le  (e^{m(\nu_k)t}+e^{-\|\nu_k \|^2t})^2\le 4.
\end{eqnarray*}
Since, $\underset{\delta\to 0^+}{\lim}m(\nu_k)=-\|\nu_k\|^2$, then 
%$\underset{\delta\to 0^+}{\lim}\|u^{\delta,\beta}-u\|=0,$ implying that
\begin{eqnarray*}
	\lim_{\delta \to 0^+}u^{\delta, \beta}(\cdot,t)=u(\cdot,t),~~ \text{ in } H^s(T^n).
\end{eqnarray*}
For the case $\beta>n$, fix an arbitrary  $p\in\bbR$.  Then,
\begin{eqnarray*}
	\| u^{\delta, \beta}(\cdot,t)-u(\cdot,t) \|^2_{H^p(T^n)}&=& \sum_{0 \ne k \in \mathbb{Z}^n}(1+ \|k \|^2)^p| \hat{u}_k^{\delta, \beta}-\hat{u}_k |^2 \\
	&=& \sum_{0 \ne k \in \mathbb{Z}^n}(1+ \|k \|^2)^s\, (1+ \|k \|^2)^{p-s}~ |e^{m(\nu_k)t}-e^{-\|\nu_k \|^2t} |^2 ~| \hat{f}_k |^2.
\end{eqnarray*}
Since $(1+ \|k \|^2)^s |\hat{f}_k |^2$ is summable, then 
to pass the limit inside the above sum, we  show that the following  function in $k$ that is given by  
\[
(1+ \|k \|^2)^{p-s}|e^{m(\nu_k)t}-e^{-\|\nu_k \|^2t}|^2,
\]
is uniformly bounded. %  in $\delta$. 
First, we  rewrite the above expression as 
\begin{eqnarray*}
	(1+ \|k \|^2)^{p-s}|e^{m(\nu_k)t}-e^{-\|\nu_k \|^2t}|^2&=&(1+ \|k \|^2)^{p-s}~e^{2m(\nu_k)t}\left(1-e^{-(m(\nu_k)+ \|\nu_k \|^2)t}\right)^2.
\end{eqnarray*}
Then, we observe that 
\begin{eqnarray*}
    \mdel(\nu_k)+\|\nu_k\|^2&=&-\|\nu_k\|^2~_2F_3\left(1,\frac{n+2-\beta}{2};2,\frac{n+2}{2},\frac{n+4-\beta}{2};-\frac{1}{4}\|\nu_k\|^2\delta^2\right)+\|\nu_k\|^2\\
    &=&\|\nu_k\|^2\left(1-_2F_3\left(1,\frac{n+2-\beta}{2};2,\frac{n+2}{2},\frac{n+4-\beta}{2};-\frac{1}{4}\|\nu_k\|^2\delta^2\right)\right).
\end{eqnarray*}
Since $_2F_3\left(1,\frac{n+2-\beta}{2};2,\frac{n+2}{2},\frac{n+4-\beta}{2};x\right)\le 1$ for all $x\le 0$, then $\mdel + \|\nu_k\|^2\ge 0$. Therefore,
$$1-e^{-(m(\nu_k)+ \|\nu_k \|^2)t}<1.$$ 
Using this fact, we have 
\begin{eqnarray*}
	(1+ \|k \|^2)^{p-s}\left|e^{m(\nu_k)t}-e^{-\|\nu_k \|^2}\right|&=&(1+ \|k \|^2)^{p-s}~e^{2m(\nu_k)t}(1-e^{-(m(\nu_k)+ \|\nu_k \|^2t})^2 \\
	&<&(1+ \|k \|^2)^{p-s}~e^{2m(\nu_k)t}.
	\end{eqnarray*}
 Lemma \ref{sec3.2_lem2} implies that there exist $c_1>0$ and $c_2>0$ such that  
 \[
 e^{m(\nu_k) t}\le \max\{e^{-c_1 \|\nu_k\|^{\beta-n}\; t}, e^{-c_2 \|\nu_k\|^{2} t}\}. 
 \]
 Consequently, 
\[
(1+ \|k \|^2)^{p-s}~e^{2m(\nu_k)t}
	\le \frac{(1+ \|k \|^2)^{p-s}}{\min\{\exp(2c_1 \|\nu_k\|^{\beta-n}\; t), \exp(2c_2 \|\nu_k\|^{2} t)\}}, 
\]
which is bounded for sufficiently large $k$ for all $\delta\in[0,1]$. 
\end{proof}

\begin{theorem}\label{convergence_NHE_Without_diffusionSource2}
Let $n\ge 1, s\in\bbR$ and let $f\in H^s(T^n)$. Suppose $u$ is the solution of the classical diffusion equation $u_t = \Delta u$, with $u|_{t=0}=f$, and for any $\beta<n+4$, let $\udel$ be the solution of the nonlocal diffusion equation \eqref{eq:nonlocal_diffusion_nosource}, then for $t>0$
\[
    \lim_{\beta\to n+2}\udel(\cdot,t) = u(\cdot,t), \text{ in } H^\infty(T^n).
\]
\end{theorem}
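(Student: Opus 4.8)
The plan is to mirror the structure of the proof of Theorem~\ref{convergence_NHE_Without_diffusionSource1}, replacing the limit $\delta\to 0^+$ by $\beta\to n+2$. Since $H^\infty(T^n)=\bigcap_{p\in\bbR}H^p(T^n)$, it suffices to fix an arbitrary $p\in\bbR$ and show that $\|\udel(\cdot,t)-u(\cdot,t)\|_{H^p(T^n)}\to 0$ as $\beta\to n+2$. Using $\hat u_k^{\delta,\beta}=\hat f_k\,e^{m^{\delta,\beta}(\nu_k)t}$ and $\hat u_k=\hat f_k\,e^{-\|\nu_k\|^2 t}$, I would write
\[
\|\udel(\cdot,t)-u(\cdot,t)\|_{H^p(T^n)}^2=\sum_{0\ne k\in\Zn}(1+\|k\|^2)^s\,(1+\|k\|^2)^{p-s}\,\bigl|e^{m(\nu_k)t}-e^{-\|\nu_k\|^2 t}\bigr|^2\,|\hat f_k|^2,
\]
and pass the limit inside the sum by dominated convergence with respect to the counting measure, factoring out the summable weight $(1+\|k\|^2)^s|\hat f_k|^2$.

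For the pointwise limit, I would note that at $\beta=n+2$ the numerator parameter $\tfrac{n+2-\beta}{2}$ of the $_2F_3$ in \eqref{eq:multiplier-general} vanishes, so $m^{\delta,n+2}(\nu_k)=-\|\nu_k\|^2$; by continuity of $_2F_3$ in its parameters, $m^{\delta,\beta}(\nu_k)\to-\|\nu_k\|^2$ for each fixed $k$ as $\beta\to n+2$, and hence each summand tends to $0$. The domination step reuses the identity and inequality from the previous proof: since $_2F_3(\cdots;x)\le 1$ for $x\le 0$ we have $m(\nu_k)+\|\nu_k\|^2\ge 0$, whence
\[
\bigl|e^{m(\nu_k)t}-e^{-\|\nu_k\|^2 t}\bigr|^2=e^{2m(\nu_k)t}\bigl(1-e^{-(m(\nu_k)+\|\nu_k\|^2)t}\bigr)^2\le e^{2m(\nu_k)t}.
\]
It then remains to bound $(1+\|k\|^2)^{p-s}e^{2m(\nu_k)t}$ by a constant independent of both $k$ and $\beta$ (for $\beta$ near $n+2$), which yields the summable dominating function $C(1+\|k\|^2)^s|\hat f_k|^2$ and completes the argument.

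The main obstacle is exactly this last uniform-in-$\beta$ bound, playing the role that Lemma~\ref{sec3.2_lem2} plays in Theorem~\ref{convergence_NHE_Without_diffusionSource1}. I would restrict $\beta$ to a compact neighborhood $[n+2-\eta,n+2+\eta]$ of $n+2$ with $\eta\in(0,1)$, so that $\beta\in(n,n+4)$ stays away from the degenerate exponents and $\beta-n$ is bounded below by a positive constant. On this interval the constants appearing in the large-$\|\nu\|$ asymptotics of Theorem~\ref{thm:asymptotic_behavior} depend continuously on $\beta$, and near $\beta=n+2$ the leading coefficient of $\|\nu_k\|^{\beta-n}$ is close to $-1$ and hence negative and bounded away from $0$; this gives a uniform estimate of the form $m^{\delta,\beta}(\nu_k)\le -c\|\nu_k\|^{\alpha}$ for large $\|\nu_k\|$, with constants $c>0$ and $\alpha>0$ independent of $\beta$. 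Because $e^{2m(\nu_k)t}$ then decays faster than any power of $\|k\|$, uniformly in $\beta$, the quantity $(1+\|k\|^2)^{p-s}e^{2m(\nu_k)t}$ is uniformly bounded, and dominated convergence yields $\|\udel(\cdot,t)-u(\cdot,t)\|_{H^p(T^n)}\to 0$; since $p$ is arbitrary, convergence holds in $H^\infty(T^n)$. The one point needing care is making the asymptotics uniform in $\beta$, which I would handle by combining the continuous dependence of the asymptotic constants with a uniform remainder estimate on the compact $\beta$-interval (or, for $\beta\le n+2$, by a scaling argument from the integral representation \eqref{multipliers} as in Lemma~\ref{sec3.2_lem2}).
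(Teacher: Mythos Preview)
Your approach is essentially the same as the paper's: fix an arbitrary Sobolev exponent, expand the $H^q$ norm of the difference via Fourier coefficients, and pass the limit $\beta\to n+2$ inside the sum by dominated convergence against the summable weight $(1+\|k\|^2)^s|\hat f_k|^2$. The paper's own proof is in fact more terse than yours---it simply asserts that for $\beta$ near $n+2$ one has $m(\nu_k)\to-\infty$ as $\|k\|\to\infty$ and declares the factor $(1+\|k\|^2)^{q-s}|e^{m(\nu_k)t}-e^{-\|\nu_k\|^2 t}|^2$ uniformly bounded, without isolating the uniform-in-$\beta$ estimate you flag as the main obstacle or invoking the $_2F_3$ inequality; your version supplies more of the missing detail.
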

\begin{proof}
Let $q\in\mathbb{R}$ be arbitrary. Consider
\begin{eqnarray*}
    \|\udel(\cdot,t) -u(\cdot,t)\|^2_{H^q(T^n)} = \sum_{ k\in\Zn}(1+\|k\|^2)^{q-s}\left|e^{m(\nu_k)t}-e^{-\|\nu_k\|^2t}\right|^2(1+\|k\|^2)^s\|\hat{f}_k\|^2.
\end{eqnarray*}
We observe that for $\beta$ near $n+2$, $m(\nu_k)\rightarrow -\infty$ as $\|k\|\rightarrow \infty$. Thus,
we can pass the limit in $\beta$ inside the sum above, since $f\in H^s(T^n)$ and the expression
\[
    (1+\|k\|^2)^{q-s}\left|e^{m(\nu_k)t}-e^{-\|\nu_k\|^2t}\right|^2,
\]
is uniformly bounded. % for sufficiently large $\|k\|$. 
%Since $f\in H^s(T^n)$, then to pass the limit in $\beta$ inside the sum, we will show that
%\[
%    (1+\|k\|^2)^{q-s}\left|e^{m(\nu_k)t}-e^{-\|\nu_k\|^2t}\right|^2
%\]
%is uniformly bounded. Since $m(\nu)\le 0$ for any $\nu \in \Rn$, then
%\begin{eqnarray*}
%    (1+\|k\|^2)^{q-s}\left|e^{m(\nu_k)t}-e^{-\|\nu_k\|^2t}\right|^2
%    &\le&(1+\|k\|^2)^{q-s}\left(\frac{1}{e^{|m(\nu_k)|t}}+\frac{1}{e^{\|\nu_k\|^2t}}\right)^2,
%\end{eqnarray*}
%which is uniformly bounded for sufficiently large $k$. 
Since $\underset{\beta\to n+2}{\lim}m(\nu_k)=-\|\nu_k\|^2$, then the result follows.
\end{proof}

\section{Nonlocal diffusion equation with a diffusion source}\label{NHE_WithdiffusionSource}
In this section, we focus on the following nonlocal diffusion equation with a diffusion source and zero initial data
\begin{eqnarray}
\label{eq:nonlocal_diffusion_source}
\begin{cases}
    u_t(x,t)=\Ldel u(x,t) +b(x) ,~~x\in T^n,~t>0,\\
    u(x,0) = 0.
\end{cases}\label{nonlocal_diffusion_eqn_src}
\end{eqnarray}
In order to study the existence, uniqueness, and regularity of solutions to \eqref{nonlocal_diffusion_eqn_src} over the space of periodic distributions, we consider the identification $U(t)=u(\cdot,t)$, with $U:[0,\infty)\rightarrow H^s(T^n)$.

Let $b\in H^s(T^n)$ and define $U, V:[0,\infty)\to H^q(T^n)$ for some $q\in\mathbb{R}$, by
\begin{eqnarray}
    U(t)&=&\hat{b}_0t + \sum_{0\ne k\in\Zn}\frac{e^{m(\nu_k)t}-1}{m(\nu_k)}\hat{b}_ke^{i\nu_k\cdot x},\label{U_2}\\[1ex]
    V(t)&=&\sum_{k\in\Zn} e^{m(\nu_k)t} \hat{b}_k e^{i\nu_k\cdot x}.\label{V_2}
\end{eqnarray}
Observe that for any $t\ge0$, $U(t)$ and $V(t)$ are well-defined periodic distributions, since $\frac{e^{m(\nu_k)t}-1}{m(\nu_k)}$ and $e^{m(\nu_k)t}$ are bounded functions in $k$.
%periodic distributions.

\begin{theorem}\label{U_2spatial_regularity}
    Let $n\ge 1,~\delta>0$, and $\beta< n+4$.  Assume that $\epsilon_1>0$ and $b\in H^s(T^n)$ for some $s\in\bbR$. Then for any fixed $t>0$, $U(t)\in H^p(T^n)$ and $V(t)\in H^{r}(T^n)$, where
    \begin{eqnarray}
	p&=&\begin{cases}
	s, \text{ if }\beta\le n,\\
	s+\beta-n, \text{ if } \beta> n, 
	\end{cases}~~\mbox { and }\quad
	r=\begin{cases}
	s, \text{ if }\beta< n,\\
	s+\dfrac{4nt}{\delta^2}(1-\epsilon_1),\text{ if }\beta=n,\\
	\infty, \text{ if } \beta> n.
	\end{cases}\label{U_2p_values}
\end{eqnarray}
\end{theorem}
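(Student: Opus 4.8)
The plan is to read off the Fourier coefficients from \eqref{U_2} and \eqref{V_2} and reduce every claim to the uniform boundedness in $k$ of an explicit multiplier factor, exactly in the style of the proof of Theorem~\ref{spatial_regularity}. Here $\hat{U}_0(t)=\hat{b}_0 t$ and $\hat{U}_k(t)=\frac{e^{m(\nu_k)t}-1}{m(\nu_k)}\hat{b}_k$ for $k\neq 0$, while $\hat{V}_k(t)=e^{m(\nu_k)t}\hat{b}_k$. Since $b\in H^s(T^n)$, the weights $(1+\|k\|^2)^s|\hat{b}_k|^2$ are summable, so $U(t)\in H^p(T^n)$ follows once $(1+\|k\|^2)^{p-s}\bigl|\frac{e^{m(\nu_k)t}-1}{m(\nu_k)}\bigr|^2$ is bounded for $k\neq 0$ (the single $k=0$ term is finite), and $V(t)\in H^r(T^n)$ follows once $(1+\|k\|^2)^{r-s}e^{2m(\nu_k)t}$ is bounded.

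I would first dispose of $V(t)$ by observing that its required estimate is \emph{literally} the one already carried out for the function $U$ in the proof of Theorem~\ref{spatial_regularity}: the exponents $r$ in \eqref{U_2p_values} coincide with the exponents $p$ in \eqref{p_values}, and the summand $(1+\|k\|^2)^{r-s}e^{2m(\nu_k)t}$ is the same. Hence the three cases $\beta<n$, $\beta=n$, and $n<\beta<n+4$ go through verbatim, including the use of the logarithmic bounds \eqref{asymptotic_inequality} for $\beta=n$ and the super-polynomial decay of $e^{2m(\nu_k)t}$ for $\beta>n$.

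The genuinely new work is the factor $\frac{e^{m(\nu_k)t}-1}{m(\nu_k)}$ appearing in $U(t)$. The basic input is that $m(\nu_k)\le 0$ for all $k$, from which the elementary inequality $e^{x}\ge 1+x$ gives, for any $m\le 0$ and fixed $t>0$,
\begin{eqnarray*}
    \left|\frac{e^{mt}-1}{m}\right| = \frac{1-e^{mt}}{|m|} \le \min\Bigl\{\,t,\ \frac{1}{|m|}\,\Bigr\}.
\end{eqnarray*}
When $\beta\le n$ we have $p-s=0$, and the bound by $t$ shows the factor is uniformly bounded, giving $p=s$. When $n<\beta<n+4$ we have $p-s=\beta-n>0$, and I would use instead the bound by $1/|m(\nu_k)|$ together with a polynomial lower bound $|m(\nu_k)|\ge c\|\nu_k\|^{\beta-n}$ for all large $\|\nu_k\|$; combined with $\|\nu_k\|\ge A\|k\|$ this yields $(1+\|k\|^2)^{\beta-n}|m(\nu_k)|^{-2}\lesssim (1+\|k\|^2)^{\beta-n}\|k\|^{-2(\beta-n)}$, which is bounded, producing exactly the claimed gain $p=s+\beta-n$.

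The main obstacle will be establishing this lower bound $|m(\nu_k)|\gtrsim \|\nu_k\|^{\beta-n}$ on the correct range. For $\beta\in(n,n+4)\setminus\{n+2\}$ it comes from the leading $\|\nu\|^{\beta-n}$ term in Theorem~\ref{thm:asymptotic_behavior}, after checking that its coefficient is strictly negative: the factor $1/(n-\beta)$ is negative while $\Gamma\!\left(\frac{n+4-\beta}{2}\right)$ and the remaining Gamma factors are positive because $n+4-\beta\in(0,4)$, so $m(\nu_k)\sim(\text{negative})\,\|\nu_k\|^{\beta-n}$ and $|m(\nu_k)|$ grows like $\|\nu_k\|^{\beta-n}$. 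The borderline value $\beta=n+2$, which is excluded from the asymptotic expansion, I would treat separately using the exact identity $m^{\delta,n+2}(\nu)=-\|\nu\|^2$ from \eqref{eq:multiplier-general}: then $|m(\nu_k)|^{-2}=\|\nu_k\|^{-4}$ and $\beta-n=2$ give the required bound directly. Once this lower bound is secured, the remaining estimates are routine.
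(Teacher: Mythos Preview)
Your proposal is correct and follows essentially the same route as the paper: reduce everything to uniform boundedness in $k$ of the explicit multiplier factor, invoke the asymptotics of Theorem~\ref{thm:asymptotic_behavior} for the lower bound $|m(\nu_k)|\gtrsim\|\nu_k\|^{\beta-n}$ when $\beta>n$, and defer the $V(t)$ estimate entirely to the proof of Theorem~\ref{spatial_regularity}. Two minor differences are worth noting: for $\beta\le n$ you use the elementary bound $\bigl|\tfrac{e^{mt}-1}{m}\bigr|\le t$, which is slightly cleaner than the paper's argument (which bounds $|e^{m(\nu_k)t}-1|\le 1$ and then needs $|m(\nu_k)|\ge C_1$ from the asymptotics); and you explicitly treat the borderline $\beta=n+2$ via the exact identity $m^{\delta,n+2}(\nu)=-\|\nu\|^2$, a detail the paper's appeal to Theorem~\ref{thm:asymptotic_behavior} glosses over.
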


\begin{proof}
We observe that
\begin{eqnarray*}
    \|U(t)-\hat{b}_0t\|^2_{H^p(T^n)}&=&\sum_{0\ne k\in\Zn}(1+\|k\|^2)^p|\hat{U}_k(t)|^2\\
    &=&\sum_{0\ne k\in\Zn}\frac{(1+\|k\|^2)^{p-s}}{|m(\nu_k)|^2}(e^{m(\nu_k)}-1)^2(1+\|k\|^2)^s|\hat{b}_k|^2.
\end{eqnarray*}
Since $b\in H^s(T^n)$ and $e^{m(\nu_k)t}$ is bounded because $m(\nu_k)< 0$, then the result follows by showing that
\[
    \frac{(1+\|k\|^2)^{p-s}}{|m(\nu_k)|^2},
\]
is bounded for $k\ne 0$. When $\beta\le n$, then $p-s=0$ and by using Theorem \ref{thm:asymptotic_behavior}, there exist $C_1>0$ and $r_1>0$ such that $|m(\nu_k)|\ge C_1$, for all $\|k\|\ge r_1$. Thus,
\[
    \frac{(1+\|k\|^2)^{p-s}}{|m(\nu_k)|^2}\le\frac{1}{C_1^2}.
\]
When $\beta>n$, then $p-s=\beta-n$, and by using Theorem \ref{thm:asymptotic_behavior}, there exist $C_2>0$ and $r_2>0$ such that $|m(\nu_k)|\ge C_2\|k\|^{\beta-n}$, for all $\|k\|\ge r_2$. This implies that
\[
    \frac{(1+\|k\|^2)^{p-s}}{|m(\nu_k)|^2}\le\frac{1}{C_2^2}\left(\frac{1+\|k\|^2}{\|k\|^2}\right)^{\beta-n},
\]
which is bounded. The proof of $V(t)\in H^r(T^n)$ is similar to the proof of Theorem \ref{spatial_regularity}.
\end{proof}

\begin{lem}\label{lem_lu+b=v}
    Let $U$ and $V$ be as defined in \eqref{U_2} and \eqref{V_2},  respectively. Then 
    \[V(t)=\Ldel U(t)+b.
    \]
\end{lem}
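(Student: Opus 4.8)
The plan is to compute $\Ldel U(t)$ directly from the Fourier-series definition \eqref{LJ} and then match it against $V(t)$ mode by mode. Reading off the Fourier coefficients of $U(t)$ from \eqref{U_2}, I have $\hat{U}_0(t)=\hat{b}_0 t$ for the zeroth mode and $\hat{U}_k(t)=\frac{e^{m(\nu_k)t}-1}{m(\nu_k)}\,\hat{b}_k$ for $k\ne 0$. Applying \eqref{LJ} gives
\[
    \Ldel U(t)=\sum_{k\in\Zn} m(\nu_k)\hat{U}_k(t)\,e^{i\nu_k\cdot x}.
\]

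First I would dispose of the zeroth mode. Since $\nu_0=0$, the multiplier formula \eqref{eq:multiplier-general} yields $m(\nu_0)=0$, so the $k=0$ term of $\Ldel U(t)$ vanishes; in particular the linear-in-time part $\hat{b}_0 t$ is annihilated by $\Ldel$. For each $k\ne 0$ the factor $m(\nu_k)$ cancels the denominator of $\hat{U}_k(t)$, leaving $m(\nu_k)\hat{U}_k(t)=\bigl(e^{m(\nu_k)t}-1\bigr)\hat{b}_k$, and therefore
\[
    \Ldel U(t)=\sum_{0\ne k\in\Zn}\bigl(e^{m(\nu_k)t}-1\bigr)\hat{b}_k\,e^{i\nu_k\cdot x}.
\]
Adding the source $b=\sum_{k\in\Zn}\hat{b}_k e^{i\nu_k\cdot x}$, the $-\hat{b}_k$ terms cancel the $k\ne 0$ part of $b$, while the surviving $k=0$ contribution from $b$ is exactly $\hat{b}_0=e^{m(\nu_0)t}\hat{b}_0$. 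Collecting terms reproduces $V(t)=\sum_{k\in\Zn}e^{m(\nu_k)t}\hat{b}_k e^{i\nu_k\cdot x}$ from \eqref{V_2}, which is the claimed identity $V(t)=\Ldel U(t)+b$.

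The only point requiring genuine care is the bookkeeping of the zeroth Fourier mode: the operator $\Ldel$ kills the constant mode because $m(0)=0$, so the $\hat{b}_0 t$ term in $U$ contributes nothing to $\Ldel U$, yet the constant $\hat{b}_0$ carried by the source $b$ is precisely what restores the $k=0$ term of $V$. Everything else reduces to a direct termwise cancellation. There are no convergence obstacles beyond those already recorded after \eqref{V_2}, where $e^{m(\nu_k)t}$ and $\frac{e^{m(\nu_k)t}-1}{m(\nu_k)}$ are noted to be bounded in $k$, so each series defines a bona fide periodic distribution and the termwise application of $\Ldel$ via \eqref{LJ} is justified.
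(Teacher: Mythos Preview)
Your proof is correct and follows essentially the same approach as the paper: apply the definition \eqref{LJ} to $U(t)$, use $m(\nu_0)=0$ to kill the constant mode, cancel $m(\nu_k)$ against the denominator for $k\ne 0$, and match with $V(t)$ and $b$ term by term. The only cosmetic difference is that the paper computes $\Ldel U(t)$ and recognizes it directly as $V(t)-b$, whereas you add $b$ back at the end; your more explicit handling of the $k=0$ mode is a nice touch.
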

\begin{proof}
By using \eqref{LJ}, for any $x\in T^n$ and $t>0$, we have
\begin{eqnarray*}
    \Ldel U(t)(x)&=&\Ldel(\hat{b}_0t)+\sum_{0\ne k\in\Zn}m(\nu_k)\hat{U}_k(t)e^{i \nu_k\cdot x}\\
    &=&\sum_{0\ne k\in\Zn}m(\nu_k)\frac{(e^{m(\nu_k)t}-1)}{m(\nu_k)}\hat{b}_ke^{i\nu_k\cdot x}\\
    &=&\sum_{0\ne k\in\Zn}\hat{b}_ke^{m(\nu_k)t}e^{i\nu_k\cdot x}-\sum_{0\ne k\in\Zn}\hat{b}_ke^{i\nu_k\cdot x}\\
    &=&V(t)(x)-b(x).
\end{eqnarray*}
\end{proof}

\begin{prop}\label{prop_du_2=v}
    Let $U(t)$ and $V(t)$ be as defined in \eqref{U_2} and \eqref{V_2}, respectively. Then, 
    \[
        \frac{dU}{dt} = V(t).
    \]
    Moreover, for $N\ge 1$,
    \[
        \frac{d^N U}{dt^N} = \underset{k\in\Zn}{\sum} \hat{b}_km(\nu_k)^{N-1}e^{m(\nu_k)t}e^{i\nu_k\cdot x},
    \]
    for all $t\in (0,\infty)$, where the differentiation here is in the sense of Gateaux differentiation.
\end{prop}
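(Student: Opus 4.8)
The plan is to establish the first identity $\frac{dU}{dt}=V(t)$ directly from the Fourier representation \eqref{U_2}--\eqref{V_2}, and then to obtain the higher derivatives by recognizing $V(t)$ as an instance of the source-free solution \eqref{U} already treated in Proposition~\ref{prop_du=v}. First I would reduce the first-derivative claim to a statement about Fourier coefficients. From \eqref{U_2} the coefficients of $U(t)$ are $\hat{U}_0(t)=\hat{b}_0 t$ and $\hat{U}_k(t)=\frac{e^{m(\nu_k)t}-1}{m(\nu_k)}\hat{b}_k$ for $k\ne 0$, while those of $V(t)$ are $\hat{V}_k(t)=e^{m(\nu_k)t}\hat{b}_k$. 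A direct computation of the difference quotient gives, for $k\ne 0$,
\[
\frac{\hat{U}_k(t+h)-\hat{U}_k(t)}{h}-\hat{V}_k(t)=\hat{b}_k\,e^{m(\nu_k)t}\,\phi\bigl(m(\nu_k)h\bigr),\qquad \phi(x):=\frac{e^x-1}{x}-1,
\]
while the $k=0$ term vanishes identically, since $\hat{U}_0(t+h)-\hat{U}_0(t)=\hat{b}_0 h$ and $\hat{V}_0(t)=\hat{b}_0$. Following the convention used in Proposition~\ref{prop_du=v}, the Gateaux derivative is taken in $H^q(T^n)$ with $q=s$ when $\beta\le n$ and $q$ arbitrary when $n<\beta<n+4$, so the claim $\frac{dU}{dt}=V(t)$ reduces to showing that
\[
\lim_{h\to 0}\sum_{0\ne k\in\Zn}(1+\|k\|^2)^q|\hat{b}_k|^2\,e^{2m(\nu_k)t}\,\bigl|\phi(m(\nu_k)h)\bigr|^2=0.
\]

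The next step is to justify passing the limit inside the sum by dominated convergence. Since $\phi$ is continuous with $\phi(0)=0$, each summand tends to $0$ as $h\to 0$, so it remains to produce an $h$-uniform summable dominant. Here I would use two elementary bounds on $\phi$ together with the sign $m(\nu_k)<0$ for $k\ne 0$: one checks that $|\phi(x)|\le 1$ for $x\le 0$ and $0\le\phi(x)\le e^{x}$ for $x\ge 0$. For $h>0$ the argument $m(\nu_k)h$ is negative, so $|\phi(m(\nu_k)h)|\le 1$; for $h<0$ with $|h|\le t/2$ the argument is positive and the bound $|\phi(m(\nu_k)h)|\le e^{|m(\nu_k)|\,|h|}$ combines with $e^{2m(\nu_k)t}$ to give $e^{2m(\nu_k)t}|\phi(m(\nu_k)h)|^2\le e^{-2|m(\nu_k)|(t-|h|)}\le e^{m(\nu_k)t}$. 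In either case the summand is controlled, up to a constant, by $(1+\|k\|^2)^{q-s}\,e^{2m(\nu_k)t}\,(1+\|k\|^2)^{s}|\hat{b}_k|^2$. When $\beta\le n$ we have $q=s$ and this is a constant multiple of $(1+\|k\|^2)^{s}|\hat{b}_k|^2$, which is summable because $b\in H^s(T^n)$; when $n<\beta<n+4$ the extra polynomial factor $(1+\|k\|^2)^{q-s}$ is absorbed by the faster-than-polynomial decay of $e^{2m(\nu_k)t}$, exactly as in the corresponding case of Theorem~\ref{spatial_regularity} via Theorem~\ref{thm:asymptotic_behavior}. This produces a summable bound independent of $h$ for $|h|\le t/2$, so dominated convergence applies and $\frac{dU}{dt}=V(t)$.

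For the higher derivatives I would then observe that $V(t)=\sum_{k\in\Zn}e^{m(\nu_k)t}\hat{b}_k e^{i\nu_k\cdot x}$ is precisely the source-free solution \eqref{U} with the coefficients $\hat{f}_k$ replaced by $\hat{b}_k$. Consequently $\frac{d^N U}{dt^N}=\frac{d^{N-1}}{dt^{N-1}}V(t)$, and the $(N-1)$-fold Gateaux derivative of $V$ is supplied by Proposition~\ref{prop_du=v} applied with $f$ replaced by $b$, giving
\[
\frac{d^{N-1}}{dt^{N-1}}V(t)=\sum_{k\in\Zn}\hat{b}_k\,m(\nu_k)^{N-1}e^{m(\nu_k)t}e^{i\nu_k\cdot x},
\]
which is the asserted formula. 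The $k=0$ term is consistent: it contributes $\hat{b}_0$ when $N=1$ and vanishes for $N\ge 2$ because $m(\nu_0)=0$.

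I expect the main obstacle to be the construction of the $h$-uniform dominating sequence for the two-sided difference quotient. The $h<0$ direction is delicate because, at the earlier time $t+h$, $U(t+h)$ is genuinely less spatially regular than $U(t)$ when $\beta=n$, so a naive bound on $\phi(m(\nu_k)h)$ fails to be integrable; only the restriction $|h|\le t/2$ together with the decay furnished by $e^{2m(\nu_k)t}$ keeps the quotient controlled. Recognizing, as in Proposition~\ref{prop_du=v}, that the temporal Gateaux derivative must be measured in $H^s(T^n)$ rather than in the higher-regularity space of Theorem~\ref{U_2spatial_regularity} is the key observation that makes the estimate close.
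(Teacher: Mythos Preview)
Your proof is correct and follows the same overall strategy as the paper: establish $\frac{dU}{dt}=V(t)$ by dominated convergence on the Fourier side, then reduce higher derivatives to the source-free case already handled in Proposition~\ref{prop_du=v}. Your execution differs in two minor but pleasant ways: by bounding $\phi(x)=(e^x-1)/x-1$ directly and splitting $h>0$ versus $h<0$ with the restriction $|h|\le t/2$, you treat all $\beta\le n$ at once without separating $\beta<n$ from $\beta=n$ as the paper does; and for the higher derivatives you invoke Proposition~\ref{prop_du=v} by explicitly identifying $V$ with the source-free solution \eqref{U}, whereas the paper simply points to ``arguments similar to those in the proof of Proposition~\ref{prop_du=v}.'' One small imprecision: for $h<0$ your dominant involves $e^{m(\nu_k)t}$ rather than $e^{2m(\nu_k)t}$, but this is harmless since both are bounded by $1$ (for $\beta\le n$) or decay faster than any polynomial (for $n<\beta<n+4$), so summability is unaffected.
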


\begin{proof}
    We  show that
    \[
        \lim_{h\to 0}\left\|\frac{1}{h}\left[U(t+h)-U(t)\right]-V(t)\right\|^2_{H^q(T^n)}=0,
    \]
    where $q=s$ when $\beta\le n$ and $q$ is arbitrary when $\beta>n$. Equivalently, we  show that
    \begin{eqnarray}
        \lim_{h\to 0}\sum_{0\ne k\in \Zn}(1+\|k\|^2)^q|\hat{b}_k|^2\left[\frac{1}{h}\left(e^{m(\nu_k)h}-1\right)\frac{e^{m(\nu_k)t}}{m(\nu_k)}-e^{m(\nu_k)t}\right]^2=0.\label{U_2gateaux}
    \end{eqnarray}
    This result follows from passing the limit inside the sum, which we justify next by the dominated convergence theorem. 
  
  When $\beta<n$, then $q=s$ and similar to \eqref{exp_bound}, there exists a constant $C_2>0$ such that 
    \[
        \left| \frac{e^{m(\nu_k)h}-1}{h}\right|<C_2,
    \]
    for all $k$, and for sufficiently small $h$. Moreover, $e^{m(\nu_k)t}$ and $\frac{1}{m(\nu_k)},$ for $k\ne 0,$ are bounded. Combining this with the fact that $b\in H^s(T^n)$, it follows that the summand in the left hand side of \eqref{U_2gateaux} is uniformly bounded.
    
    When $\beta>n,$ then the expression
    \begin{eqnarray*}
        (1+\|k\|^2)^q|\hat{b}_k|^2\left[\frac{1}{h}\left(e^{m(\nu_k)h}-1\right)\frac{e^{m(\nu_k)t}}{m(\nu_k)}-e^{m(\nu_k)t}\right]^2
    \end{eqnarray*}
    is uniformly bounded since $m(\nu_k)\to -\infty$ as $\|k\|\to \infty$.
    
    %\vspace{10pt}
    %\noindent \textcolor{red}{
    When $\beta=n$, then $q=s$ and thus it is sufficient to show that 
    \[
        \left[\frac{1}{h}\left(e^{m(\nu_k)h}-1\right)\frac{e^{m(\nu_k)t}}{m(\nu_k)}-e^{m(\nu_k)t}\right]^2=\frac{e^{2m(\nu_k)t}}{m(\nu_k)^2}\left[\frac{1}{h}\left(e^{m(\nu_k)h}-1\right)-m(\nu_k)\right]^2
    \]
    is uniformly bounded. There exists $\epsilon>0$ such that
    \begin{eqnarray*}
        \left|\frac{e^{m(\nu_k)h}-1}{h}-m(\nu_k)\right| &\le& \left|\frac{e^{m(\nu_k)h}-1}{h}\right|+|m(\nu_k)|
        \le 2|m(\nu_k)|+\epsilon.
    \end{eqnarray*}
    Thus, by letting $\epsilon\to 0$, we have
    \begin{eqnarray*}
        \frac{e^{2m(\nu_k)t}}{m(\nu_k)^2}\left[\frac{1}{h}\left(e^{m(\nu_k)h}-1\right)-m(\nu_k)\right]^2&\le&4e^{2m(\nu_k)t},
    \end{eqnarray*}
    showing boundedness since $m(\nu_k)\le 0$, and therefore completing the proof of the first part.
    %}
    The second part of this proposition follows from arguments similar to those in the proof of  Proposition~\ref{prop_du=v}.
\end{proof}

The following regularity theorem summarizes the results of this subsection.
\begin{theorem}
    Let $b\in H^s(T^n)$ with $s\in\bbR$. Then there exists a unique solution $U$ to the nonlocal diffusion equation
    \begin{eqnarray}
        \begin{cases}
            \displaystyle\frac{dU}{dt}=\Ldel U(t) +b,\\
            U(0) = 0.
        \end{cases}\label{U_2gateaux_nonlocal_diffusion_eqn}
    \end{eqnarray}
    Moreover, $U\in C^\infty((0,\infty);H^p(T^n))$, where $p$ is as defined in \eqref{U_2p_values}.
\end{theorem}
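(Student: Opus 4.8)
The plan is to follow the same architecture as the proof of Theorem~\ref{exist_uniq} for the homogeneous equation, since the three structural ingredients are already in place: the identity $V(t)=\Ldel U(t)+b$ from Lemma~\ref{lem_lu+b=v}, the differentiation formula $\frac{dU}{dt}=V(t)$ from Proposition~\ref{prop_du_2=v}, and the spatial regularity $U(t)\in H^p(T^n)$ from Theorem~\ref{U_2spatial_regularity}.

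First I would establish existence by combining the two identities above: Proposition~\ref{prop_du_2=v} gives $\frac{dU}{dt}=V(t)$ and Lemma~\ref{lem_lu+b=v} gives $V(t)=\Ldel U(t)+b$, so that $\frac{dU}{dt}=\Ldel U(t)+b$ for $t>0$. Evaluating the series \eqref{U_2} at $t=0$ yields $U(0)=0$, since the term $\hat b_0 t$ vanishes and each coefficient $(e^{m(\nu_k)\cdot 0}-1)/m(\nu_k)$ is zero; hence the $U$ defined by \eqref{U_2} solves \eqref{U_2gateaux_nonlocal_diffusion_eqn}.

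Next I would prove uniqueness exactly as in Theorem~\ref{exist_uniq}: given a second solution $U_2$, set $W=U-U_2$, so that the source term $b$ cancels and $W$ solves the homogeneous problem $\frac{dW}{dt}=\Ldel W$ with $W(0)=0$. Expanding $W$ in its Fourier series and using that each $\phi_k$ is an eigenfunction of $\Ldel$ with eigenvalue $m(\nu_k)$ reduces this to the scalar ODEs $\frac{d}{dt}\hat W_k(t)=m(\nu_k)\hat W_k(t)$ with $\hat W_k(0)=0$, whose only solution is $\hat W_k\equiv 0$. Therefore $W\equiv 0$ and $U=U_2$.

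Finally, the spatial part of the regularity claim, $U(t)\in H^p(T^n)$, is precisely Theorem~\ref{U_2spatial_regularity}, so it remains to upgrade the Gateaux differentiability of Proposition~\ref{prop_du_2=v} to the statement $U\in C^\infty((0,\infty);H^p(T^n))$. Using the higher-derivative formula $\frac{d^N U}{dt^N}=\sum_k \hat b_k m(\nu_k)^{N-1}e^{m(\nu_k)t}e^{i\nu_k\cdot x}$ from Proposition~\ref{prop_du_2=v}, I would show that for every $N\ge 1$ this series defines a continuous $H^p(T^n)$-valued function of $t$ on $(0,\infty)$. I expect this continuity verification to be the main obstacle. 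For $\beta>n$ it is mild, because the factor $e^{m(\nu_k)t}$ decays superpolynomially in $\|k\|$ for each fixed $t>0$ by Theorem~\ref{thm:asymptotic_behavior}, dominating both the algebraic factor $m(\nu_k)^{N-1}$ and the $H^p$ weight, so the series and all its termwise $t$-derivatives converge locally uniformly. The delicate case is $\beta=n$, where the multipliers grow only logarithmically; there I would fix a compact subinterval $[\tau,T]\subset(0,\infty)$, use $m(\nu_k)\le 0$ to bound $e^{m(\nu_k)t}\le e^{m(\nu_k)\tau}$ as in Proposition~\ref{temporal_regularity}, and combine this with the asymptotic inequality \eqref{asymptotic_inequality} to produce a summable majorant of $|m(\nu_k)|^{N-1}e^{m(\nu_k)t}$ that is uniform over $[\tau,T]$, after which the dominated convergence theorem passes continuity and differentiability through the sum.
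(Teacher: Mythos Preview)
Your proposal is correct and follows essentially the same architecture as the paper: existence from Lemma~\ref{lem_lu+b=v} together with Proposition~\ref{prop_du_2=v}, uniqueness by reducing to the homogeneous problem as in Theorem~\ref{exist_uniq}, and spatial regularity from Theorem~\ref{U_2spatial_regularity}. The only difference is that you spend effort upgrading Gateaux differentiability to continuity of the $H^p$-valued derivatives; the paper sidesteps this entirely by declaring (in the remark following the theorem) that the temporal regularity is meant in the Gateaux sense, so your last paragraph is more than what is actually required.
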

\begin{remark}
 The temporal regularity is in the sense of Gateaux differentiation.
\end{remark}

\begin{proof}
    The existence follows from Lemma \ref{lem_lu+b=v} and Proposition \ref{prop_du_2=v}. For the uniqueness, the proof is similar to the proof of uniqueness in Theorem \ref{exist_uniq}.  The spatial regularity follows from Theorem \ref{U_2spatial_regularity} and the temporal regularity follows from Proposition~\ref{prop_du_2=v}. 
\end{proof}

\subsection{Regular functions as solutions of nonlocal diffusion equation with diffusion source}\label{regular_fns_as_sol2}
In this section, we  focus on functions $b$ with absolutely summable Fourier coefficients,  $\underset{k\in\Zn}{\sum}|\hat{b}_k|<\infty$.

\begin{prop}\label{U_2temporal_regularity}
Let $b\in H^s(T^n)$ such that$\underset{k\in\Zn}{\sum}|\hat{b}_k|<\infty$ and let $\beta< n+4$. Then 
$$u(x,\cdot)\in C^\infty((0,\infty)),$$ 
for all $x\in T^n.$
\end{prop}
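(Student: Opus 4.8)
The plan is to mirror the structure of the proof of Proposition~\ref{temporal_regularity}, since the statements are nearly identical except that the initial-data coefficients $\hat{f}_k$ are replaced by the source coefficients $\hat{b}_k$ and the solution $u$ is now the one built from the source term via \eqref{U_2}. Concretely, write the solution as a Fourier series in the spatial variable,
\[
    u(x,t) = \hat{b}_0 t + \sum_{0\ne k\in\Zn}\frac{e^{m(\nu_k)t}-1}{m(\nu_k)}\hat{b}_k e^{i\nu_k\cdot x},
\]
and set $g_k(t) = \frac{e^{m(\nu_k)t}-1}{m(\nu_k)}\hat{b}_k e^{i\nu_k\cdot x}$ for $k\ne 0$, treating the $k=0$ term $\hat{b}_0 t$ separately (it is a smooth function of $t$ whose derivatives are trivial). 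The strategy is to apply the Leibniz integral rule for the counting measure to differentiate the series term-by-term in $t$, exactly as in Proposition~\ref{temporal_regularity}, which requires producing a summable dominating sequence $\theta_k$ for each order of derivative.

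First I would compute the $N$-th temporal derivative of the general term. For $N\ge 1$ and $k\ne 0$ one has
\[
    \frac{d^N g_k}{dt^N} = \hat{b}_k\, m(\nu_k)^{N-1} e^{m(\nu_k)t} e^{i\nu_k\cdot x},
\]
so that $\left|\frac{d^N g_k}{dt^N}\right| = |\hat{b}_k|\,|m(\nu_k)|^{N-1} e^{m(\nu_k)t}$. Fixing $t>0$ and choosing $\tau$ with $0<\tau<t$, I would define the candidate dominating sequence $\theta_k = |\hat{b}_k|\,|m(\nu_k)|^{N-1} e^{m(\nu_k)\tau}$, which bounds $|\tfrac{d^N g_k}{dt^N}|$ on $[\tau,\infty)$ since $m(\nu_k)\le 0$ makes $e^{m(\nu_k)t}$ decreasing in $t$. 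Summability of $\theta_k$ then splits into the same cases as before: when $\beta<n$ the multipliers are bounded, $|m(\nu_k)|\le C$, so $\theta_k\le C^{N-1}|\hat{b}_k|$ is summable by the hypothesis $\sum_k|\hat{b}_k|<\infty$; when $\beta\ge n$ one has $e^{m(\nu_k)\tau}\to 0$ faster than any polynomial in $\|k\|$ grows (since $m(\nu_k)\to-\infty$), so $|m(\nu_k)|^{N-1}e^{m(\nu_k)\tau}\le 1$ for all sufficiently large $\|k\|$, again giving $\theta_k\le|\hat{b}_k|$ up to finitely many terms. Having verified both term-wise summability of $g_k(t)$ and a summable dominating bound for each derivative, the Leibniz rule justifies differentiating under the summation to all orders, yielding $\frac{\partial^N u}{\partial t^N}(x,t) = \sum_{k}\hat{b}_k m(\nu_k)^{N-1}e^{m(\nu_k)t}e^{i\nu_k\cdot x}$ and hence $u(x,\cdot)\in C^\infty((0,\infty))$.

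The one genuinely new wrinkle, compared to Proposition~\ref{temporal_regularity}, is the factor $1/m(\nu_k)$ appearing in the coefficients of \eqref{U_2}, which could in principle blow up if $m(\nu_k)$ were small. This is precisely why the differentiated coefficients carry $m(\nu_k)^{N-1}$ rather than $m(\nu_k)^N$, so the potential singularity from $1/m(\nu_k)$ is cancelled upon differentiating once; for the zeroth-order (undifferentiated) summability one instead uses that $\frac{e^{m(\nu_k)t}-1}{m(\nu_k)}$ is a bounded function of $k$ (as already noted after \eqref{V_2}). I would therefore expect the main point requiring care to be confirming that the $k=0$ mode and the small-$\|k\|$ behaviour of $1/m(\nu_k)$ cause no trouble: the $k=0$ term is excluded from the sum and handled as the explicit linear-in-$t$ term, and for $k\ne 0$ Theorem~\ref{thm:asymptotic_behavior} (together with $m(\nu_k)<0$ for $k\ne0$) keeps $|m(\nu_k)|$ bounded away from zero on the relevant index set, so no denominator degenerates. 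Beyond this, the argument is a routine transcription of the dominated-convergence bookkeeping in Proposition~\ref{temporal_regularity}.
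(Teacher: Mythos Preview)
Your proposal is correct and follows essentially the same approach as the paper's proof: both set $g_k(t)=\frac{e^{m(\nu_k)t}-1}{m(\nu_k)}\hat{b}_k e^{i\nu_k\cdot x}$, invoke the Leibniz rule with the dominating sequence $\theta_k=|\hat{b}_k||m(\nu_k)|^{N-1}e^{m(\nu_k)\tau}$ for $0<\tau<t$, and split into the cases $\beta<n$ (bounded multipliers) and $\beta\ge n$ (exponential decay of $e^{m(\nu_k)\tau}$). The only cosmetic difference is that the paper treats $N=1$ separately before passing to general $N\ge 2$, whereas you handle all $N\ge 1$ at once; your discussion of the $1/m(\nu_k)$ cancellation and the boundedness of $\frac{e^{m(\nu_k)t}-1}{m(\nu_k)}$ matches the paper's treatment exactly.
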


\begin{proof}
    We  use the Leibniz rule to differentiate under the sum. Let $g_k(t)=\frac{e^{m(\nu_k)t}-1}{m(\nu_k)}\hat{b}_ke^{i\nu_k\cdot x}$ and consider
    \begin{eqnarray*}
        \sum_{0\ne k\in \Zn}|g_k(t)|&=&\sum_{0\ne k\in\Zn} \left|\frac{e^{m(\nu_k)t}-1}{m(\nu_k)}\hat{b}_ke^{i\nu_k\cdot x}\right|\\
        &=&\sum_{0\ne k\in\Zn} |\hat{b}_k|\left|\frac{e^{m(\nu_k)t}-1}{m(\nu_k)}\right|\\
        &\le&\sum_{0\ne k\in\Zn} |\hat{b}_k|\frac{1}{|m(\nu_k)|},
    \end{eqnarray*}
    where in the last inequality, we used the fact that $m(\nu_k)<0$. Since $\frac{1}{|m(\nu_k)|},~k\ne 0,$ is bounded and $\underset{k\in\Zn}{\sum}|\hat{b}_k|<\infty$, then $g_k(t)$ is summable for any fixed $t$. Moreover, 
    \[
    \frac{dg_k}{dt}=\hat{b}_ke^{m(\nu_k)t}e^{i\nu_k\cdot x},
    \]
    is continuous for all $k$. Now fix $t>0$, then there exists $\tau$ such that $0<\tau<t$. We define $\theta_k:=|\hat{b}_k|e^{m(\nu_k)\tau}$. Since $m(\nu_k)\le 0$, for all $k$, then
    \[
        \theta_k=|\hat{b}_k|e^{m(\nu_k)\tau}\le |\hat{b}_k|,
    \]
    showing that $\theta_k$ is summable. Moreover,
    \begin{eqnarray*}
        \left|\frac{dg_k}{dt}\right|&=& |\hat{b}_k|e^{m(\nu_k)t}\\
        &\le& |\hat{b}_k|e^{m(\nu_k)\tau}\\
        &=&\theta_k. 
    \end{eqnarray*}
    Therefore, 
    \begin{eqnarray*}
        \frac{\partial u(x,t)}{\partial t}&=&\hat{b}_0+\frac{d}{dt}\sum_{0\ne k\in\Zn}g_k(t)\\
        &=&\hat{b}_0+\sum_{0\ne k\in\Zn}\frac{dg_k}{dt}\\
        &=&\sum_{k\in\Zn}\hat{b}_ke^{m(\nu_k)t}e^{i\nu_k\cdot x}.
    \end{eqnarray*}
    This shows that $u$ is differentiable with respect to $t$. For higher derivatives, let $N\ge 2$ be an integer, we observe that 
    \[
        \frac{d^N g_k}{dt^N}=\hat{b}_k\;(m(\nu_k))^{N-1}e^{m(\nu_k)t}e^{i\nu_k\cdot x}.
    \]
    Define $\theta_k=|\hat{b}_k||m(\nu_k)|^{N-1}e^{m(\nu_k)\tau}$. When $\beta<n$, then there exists $C>0$ such that $|m(\nu_k)|\le C$. Thus,
    \[
        \theta_k=|\hat{b}_k||m(\nu_k)|^{N-1}e^{m(\nu_k)\tau}\le C^{N-1}|\hat{b}_k|,
    \]
    showing that $\theta_k$ is summable. When $\beta\ge n$, then $e^{m(\nu_k)\tau}\to 0$ as $\|k\|\to \infty$. Thus, $|m(\nu_k)|^{N-1}e^{m(\nu_k)\tau}\le~1$ for sufficiently large $\|k\|$. Hence $\theta_k\le |\hat{b}_k|$, showing that $\theta_k$ is summable. Furthermore,
    \begin{eqnarray*}
        \left|\frac{d^N g_k}{dt^N}\right|&=&|\hat{b}_k||m(\nu_k)|^{N-1}e^{m(\nu_k)t}\\
        &\le&|\hat{b}_k||m(\nu_k)|^{N-1}e^{m(\nu_k)\tau}\\
        &=&\theta_k.
    \end{eqnarray*}
    This implies that $u(x,\cdot)$ is N-times continuously differentiable and 
    \begin{eqnarray*}
        \frac{\partial^N u(x,t)}{\partial t^N}&=&\frac{d^N}{dt^N}\sum_{k\in\Zn}g_k(t)\\
        &=&\sum_{k\in\Zn}\frac{d^N g_k}{dt^N}\\
        &=&\sum_{k\in\Zn}\hat{b}_k|m(\nu_k)|^{N-1}e^{m(\nu_k)t}e^{i\nu_k\cdot x}.
    \end{eqnarray*}
    Since $N$ is arbitrary, it follows that $u(x,\cdot)\in C^\infty((0,\infty))$.
\end{proof}

From Theorem \ref{U_2spatial_regularity} and Proposition \ref{U_2temporal_regularity} we obtain the following regularity result.
\begin{theorem}\label{thm:regularity_diffusionSource1}
Let $n\ge 1,~\delta>0$ and $\beta< n+4$. Assume that  $b\in H^s(T^n)$ and its Fourier coefficients are summable. Then,
\begin{enumerate}
	\item $u \in C^\infty{((0,\infty);H^s(T^n))}$, for $\beta \le n$,
	\item $u\in
	C^\infty\left((0,\infty);H^{s+\beta-n}(T^n)\right)$, for $\beta>n$.
\end{enumerate}
\end{theorem}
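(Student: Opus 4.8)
The final statement is Theorem~\ref{thm:regularity_diffusionSource1}, which asserts joint space-time smoothness of the solution $u$ to the nonlocal diffusion equation with source $b$ under the summability hypothesis $\sum_k|\hat b_k|<\infty$. Let me think about how to prove this.

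The theorem combines two previously established results: the spatial regularity from Theorem~\ref{U_2spatial_regularity} (giving the $H^p$ membership with the stated $p$ values) and the temporal regularity from Proposition~\ref{U_2temporal_regularity} (giving $u(x,\cdot)\in C^\infty$ for each fixed $x$). The conclusion $u\in C^\infty((0,\infty);H^p(T^n))$ is a statement about the map $t\mapsto u(\cdot,t)$ being infinitely differentiable as a Banach-space-valued function.

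So the real content is: how do we upgrade "smooth in $t$ pointwise in $x$" plus "lives in $H^p$ for each $t$" to "smooth as an $H^p$-valued function of $t$"?

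Let me think about the structure. We have $U(t) = \hat b_0 t + \sum_{k\neq 0}\frac{e^{m(\nu_k)t}-1}{m(\nu_k)}\hat b_k e^{i\nu_k\cdot x}$. The temporal derivatives are, by Proposition~\ref{prop_du_2=v}, $\frac{d^N U}{dt^N} = \sum_k \hat b_k m(\nu_k)^{N-1}e^{m(\nu_k)t}e^{i\nu_k\cdot x}$ (in the Gateaux sense). We need these derivatives to converge in $H^p$ and the convergence of difference quotients to hold in $H^p$.

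**My proof plan.**

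The plan is to combine the two ingredient results cleanly. First I would invoke Theorem~\ref{U_2spatial_regularity} to conclude that for each fixed $t>0$, $U(t) = u(\cdot,t) \in H^p(T^n)$ with $p$ as in \eqref{U_2p_values}. Second I would establish that the $t$-derivatives — which by Proposition~\ref{prop_du_2=v} have the explicit Fourier series $\sum_k \hat b_k m(\nu_k)^{N-1}e^{m(\nu_k)t}e^{i\nu_k\cdot x}$ — also lie in $H^p(T^n)$ and depend continuously on $t$ in the $H^p$-norm.

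**The key estimate.**

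The crux is a single uniform bound. Fix $N\ge 0$ and a compact interval $[a,b]\subset(0,\infty)$. I would show that
$$\sup_{t\in[a,b]}\sum_{k}(1+\|k\|^2)^p\,|\hat b_k|^2\,|m(\nu_k)|^{2(N-1)}e^{2m(\nu_k)t}<\infty,$$
which certifies that the $N$-th derivative series is a well-defined element of $H^p$ for every $t$ in the interval. Using the summability hypothesis $\sum_k|\hat b_k|<\infty$, I can in fact factor out $|\hat b_k|$ and reduce to bounding $(1+\|k\|^2)^{p/2}|m(\nu_k)|^{N-1}e^{m(\nu_k)a}$ uniformly in $k$; the asymptotics of Theorem~\ref{thm:asymptotic_behavior} control the competition between the polynomial/logarithmic growth of $(1+\|k\|^2)^{p/2}|m(\nu_k)|^{N-1}$ and the decay (or boundedness) of $e^{m(\nu_k)a}$. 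For $\beta<n$ the multiplier is bounded and the weight $(1+\|k\|^2)^{p/2}=(1+\|k\|^2)^{s/2}$ is absorbed exactly as in Proposition~\ref{U_2temporal_regularity}; for $\beta>n$ the exponential decay $e^{m(\nu_k)a}$ beats every polynomial, giving membership in $H^p$ for the stated $p=s+\beta-n$; the borderline $\beta=n$ uses the logarithmic asymptotics together with the $(1-\epsilon)$ slack, mirroring Theorem~\ref{spatial_regularity}.

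**Upgrading to $C^\infty((0,\infty);H^p)$.**

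With the uniform bound in hand, I would argue exactly as in the proof of Proposition~\ref{prop_du_2=v} but with the $H^p$-norm in place of the $H^q$-norm: the difference quotient $\frac1h[U^{(N)}(t+h)-U^{(N)}(t)]$ converges to $U^{(N+1)}(t)$ in $H^p$ by the dominated convergence theorem for the counting measure, the dominating summand being supplied by the key estimate above applied on a subinterval $[\tau,t+1]$ with $0<\tau<t$. Continuity of $t\mapsto U^{(N)}(t)$ in $H^p$ follows from the same dominated-convergence argument (each summand $t\mapsto\hat b_k m(\nu_k)^{N-1}e^{m(\nu_k)t}e^{i\nu_k\cdot x}$ is continuous in $t$ and is dominated on $[\tau,T]$ by a summable sequence independent of $t$). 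Since $N$ is arbitrary, this yields $U\in C^\infty((0,\infty);H^p(T^n))$.

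**The main obstacle.**

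The main technical obstacle is the borderline case $\beta=n$: there the multiplier grows only logarithmically, so $|m(\nu_k)|^{N-1}$ contributes logarithmic factors that must be dominated by the exponential $e^{m(\nu_k)t}\sim\|\nu_k\|^{-4nt/\delta^2}$ while simultaneously accounting for the $H^p$-weight with $p=s+\frac{4nt}{\delta^2}(1-\epsilon)$. This is precisely the delicate balancing already carried out in Theorem~\ref{spatial_regularity} and the $\beta=n$ branch of Proposition~\ref{prop_du=v} — the inequalities \eqref{asymptotic_inequality} and \eqref{log-upperbound} are the tools — so I would handle it by the same device of splitting off an $\epsilon$-slack and using the log-versus-power estimate; everything else is a routine, if bookkeeping-heavy, application of dominated convergence.
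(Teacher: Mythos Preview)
Your proposal is correct and follows exactly the paper's approach: the paper's entire proof of this theorem is the single sentence ``From Theorem~\ref{U_2spatial_regularity} and Proposition~\ref{U_2temporal_regularity} we obtain the following regularity result,'' and what you have written is simply a careful unpacking of how those two ingredients combine (together with the Gateaux differentiability already supplied by Proposition~\ref{prop_du_2=v}) to yield $C^\infty((0,\infty);H^p)$. One small slip: in your discussion of the borderline case $\beta=n$ you invoke the target weight $p=s+\tfrac{4nt}{\delta^2}(1-\epsilon)$, but in the diffusion-source theorem the relevant $p$ for $\beta\le n$ is just $p=s$ (see \eqref{U_2p_values}), so that branch is actually easier than you suggest---the log estimates are still needed to control the $|m(\nu_k)|^{N-1}$ factors in the higher derivatives, but no $\epsilon$-slack in the spatial weight is required.
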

Convergence of solutions of the nonlocal diffusion equation \eqref{eq:nonlocal_diffusion_source} to the solution of the corresponding classical diffusion equation is given next in Theorem~\ref{convergence_NHE_With_diffusionSource1} and Theorem~\ref{convergence_NHE_With_diffusionSource2}.
%, in the limit as $\delta\to 0$ or in the limit as $\beta\to (n+2)^-$, respectively.\textcolor{red}{hererereerre}
\begin{theorem}
\label{convergence_NHE_With_diffusionSource1}
	Let $n \ge 1$ and  $b \in H^s(T^n)$, with $s\in\bbR$. Suppose $u$ is the solution of the classical diffusion equation $u_t = \triangle u + b$ with initial condition $ u|_{t=0}=0$. For any $\delta>0$, let $u^{\delta, \beta}$ be the solution of the nonlocal diffusion equation
	\begin{eqnarray}
    \begin{cases}
        \udel_t(x,t)=\Ldel \udel(x,t) +b(x),~~x\in T^n,~t>0,\\
        \udel(x,0) = 0, ~~x\in T^n.
    \end{cases}\label{nonlocal_diffusion_eqn2}
    \end{eqnarray}
	Then, for $t>0$ and  $\beta\le n$,
	\begin{eqnarray*}
		\lim_{\delta \to 0^+}u^{\delta, \beta}(\cdot,t)=u(\cdot,t),~~ \text{ in } H^s(T^n),
	\end{eqnarray*}
	and for $n<\beta\le n+2$,
	\begin{eqnarray*}
		\lim_{\delta \to 0^+}u^{\delta, \beta}(\cdot,t)=u(\cdot,t),~~ \text{ in } H^{s+\beta-n}(T^n).
	\end{eqnarray*}
\end{theorem}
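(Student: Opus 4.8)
The plan is to proceed exactly as in the proof of Theorem~\ref{convergence_NHE_Without_diffusionSource1}, comparing Fourier coefficients mode by mode and justifying the interchange of limit and summation by dominated convergence. First I would record the coefficients of both solutions. By \eqref{U_2}, the nonlocal solution has $\hat{u}^{\delta,\beta}_0(t)=\hat{b}_0 t$ and $\hat{u}^{\delta,\beta}_k(t)=\frac{e^{m(\nu_k)t}-1}{m(\nu_k)}\hat{b}_k$ for $k\ne 0$; since the classical operator is the $\beta=n+2$ specialization with multiplier $-\|\nu_k\|^2$, the classical solution has $\hat{u}_0(t)=\hat{b}_0 t$ and $\hat{u}_k(t)=\frac{e^{-\|\nu_k\|^2 t}-1}{-\|\nu_k\|^2}\hat{b}_k$. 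The zero modes coincide and drop out, so for the relevant Sobolev exponent $p$ one has
\[
\|u^{\delta,\beta}(\cdot,t)-u(\cdot,t)\|_{H^p(T^n)}^2=\sum_{0\ne k\in\Zn}(1+\|k\|^2)^{p-s}\,|D_k|^2\,(1+\|k\|^2)^s|\hat{b}_k|^2,
\]
where $D_k:=\frac{e^{m(\nu_k)t}-1}{m(\nu_k)}-\frac{e^{-\|\nu_k\|^2 t}-1}{-\|\nu_k\|^2}$. Because $(1+\|k\|^2)^s|\hat{b}_k|^2$ is summable, it suffices to produce a bound $(1+\|k\|^2)^{p-s}|D_k|^2\le M$ uniform in both $k$ and $\delta\in(0,1]$; dominated convergence together with the pointwise limit $m^{\delta,\beta}(\nu_k)\to-\|\nu_k\|^2$ (which forces $D_k\to 0$, since $m\mapsto (e^{mt}-1)/m$ is continuous at $-\|\nu_k\|^2<0$) then yields the claim.

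For $\beta\le n$ I would take $p=s$, so the weight $(1+\|k\|^2)^{p-s}=1$ and only $|D_k|^2$ must be controlled. Here I use that the scalar function $g(m)=\frac{e^{mt}-1}{m}=\int_0^t e^{ms}\,ds$ satisfies $0\le g(m)\le t$ for every $m\le 0$. Since $m(\nu_k)<0$ and $-\|\nu_k\|^2<0$ for $k\ne 0$, both terms defining $D_k$ lie in $[0,t]$, whence $|D_k|\le t$ uniformly in $k$ and $\delta$. This is the desired majorant, and convergence in $H^s(T^n)$ follows.

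The case $n<\beta\le n+2$, where $p=s+\beta-n$, is the crux and is where Lemma~\ref{sec3.2_lem2} enters: the extra weight $(1+\|k\|^2)^{\beta-n}$ must now be absorbed by decay of $D_k$, uniformly in $\delta\le 1$. Using $|e^{mt}-1|\le 1$ for $m\le 0$ gives $\left|\frac{e^{m(\nu_k)t}-1}{m(\nu_k)}\right|\le \frac{1}{|m(\nu_k)|}$, and Lemma~\ref{sec3.2_lem2} yields $c_1,c_2>0$ with $|m^{\delta,\beta}(\nu_k)|\ge\min\{c_1\|\nu_k\|^{\beta-n},c_2\|\nu_k\|^2\}$ for all $\delta\le 1$; since $\beta-n<2$, this minimum equals $c_1\|\nu_k\|^{\beta-n}$ for all large $\|k\|$, so $\left|\frac{e^{m(\nu_k)t}-1}{m(\nu_k)}\right|\le\frac{1}{c_1\|\nu_k\|^{\beta-n}}$. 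The classical term obeys $\left|\frac{e^{-\|\nu_k\|^2 t}-1}{-\|\nu_k\|^2}\right|\le \frac{1}{\|\nu_k\|^2}\le \frac{1}{\|\nu_k\|^{\beta-n}}$ for $\|\nu_k\|\ge 1$, giving $|D_k|\le C\|\nu_k\|^{-(\beta-n)}$. Invoking the comparison $A\|k\|\le\|\nu_k\|$ from the proof of Theorem~\ref{spatial_regularity} then bounds $(1+\|k\|^2)^{\beta-n}|D_k|^2\le C'\,(1+\|k\|^2)^{\beta-n}\|k\|^{-2(\beta-n)}$, which is bounded for large $k$; the finitely many small $k$ are harmless. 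The borderline $\beta=n+2$ is immediate, since then $m^{\delta,\beta}(\nu_k)=-\|\nu_k\|^2$ makes $D_k\equiv 0$. The main difficulty throughout is securing the $\delta$-uniformity of the lower bound on $|m^{\delta,\beta}(\nu_k)|$, which is exactly the content of Lemma~\ref{sec3.2_lem2} and the scaling $m^{\delta,\beta}(\nu)=\delta^{-2}m^{1,\beta}(\delta\nu)$; with that in hand, the dominated convergence argument closes as in the source-free case.
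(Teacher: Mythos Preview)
Your argument is correct and follows the same dominated-convergence-on-Fourier-modes strategy as the paper. Two small differences are worth noting. For $\beta\le n$ you bound $|D_k|\le t$ via the integral representation $\frac{e^{mt}-1}{m}=\int_0^t e^{ms}\,ds\in[0,t]$, whereas the paper bounds each term separately by $\frac{1}{|m(\nu_k)|}+\frac{1}{\|\nu_k\|^2}$; your bound is cleaner and manifestly uniform in $\delta$. For $n<\beta<n+2$ you invoke Lemma~\ref{sec3.2_lem2} to obtain the lower bound $|m^{\delta,\beta}(\nu_k)|\ge c_1\|\nu_k\|^{\beta-n}$ uniformly in $\delta\le 1$, while the paper cites Theorem~\ref{thm:asymptotic_behavior} for a bound $|m(\nu_k)|>C\|k\|^{\beta-n}$ without explicitly addressing the $\delta$-dependence of $C$; your use of Lemma~\ref{sec3.2_lem2} is the more careful choice here, and indeed is exactly how the paper handled the analogous step in the source-free Theorem~\ref{convergence_NHE_Without_diffusionSource1}.
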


\begin{proof}
    The Fourier coefficients satisfy $\hat{u}_k^{\delta, \beta}=\frac{(e^{m(\nu_k)t}-1)}{m(\nu_k)}\hat{b}_k$ and $\hat{u}_k= \frac{(e^{-\|\nu_k\|^2t}-1)}{-\|\nu_k\|^2}\hat{b}_k$, for $k\ne 0$. For $\beta\le n$, then
\begin{eqnarray*}
	\| u^{\delta, \beta}(\cdot,t)-u(\cdot,t) \|^2_{H^s(T^n)}&=& \sum_{0 \ne k \in \mathbb{Z}^n}(1+ \|k \|^2)^s~ | \hat{u}_k^{\delta, \beta}-\hat{u}_k |^2 \\
	&=& \sum_{0 \ne k \in \mathbb{Z}^n}(1+ \|k \|^2)^s~ \left|\frac{(e^{m(\nu_k)t}-1)}{m(\nu_k)}-\frac{(e^{-\|\nu_k\|^2t}-1)}{-\|\nu_k\|^2} \right|^2 ~| \hat{b}_k |^2.
\end{eqnarray*}
To pass the limit $\delta\rightarrow 0^+$ inside the sum, it is sufficient to show that $\left|\frac{(e^{m(\nu_k)t}-1)}{m(\nu_k)}-\frac{(e^{-\|\nu_k\|^2t}-1)}{-\|\nu_k\|^2} \right|$  is uniformly bounded. 
%is bounded uniformly in $\delta$. 
Using  \eqref{multipliers},   $m(\nu)\le 0$ for $\nu\in\mathbb{R}^n$, and  thus, 
\begin{eqnarray*}
    \left|\frac{(e^{m(\nu_k)t}-1)}{m(\nu_k)}-\frac{(e^{-\|\nu_k\|^2t}-1)}{-\|\nu_k\|^2} \right|
    &\le&\frac{1}{|m(\nu_k)|}+\frac{1}{\|\nu_k\|^2}.
\end{eqnarray*}
Since $\frac{1}{m(\nu_k)},~k\ne 0$, and $\frac{1}{\|\nu_k\|^2},~k\ne 0$, are bounded, then $\left|\frac{(e^{m(\nu_k)t}-1)}{m(\nu_k)}-\frac{(e^{-\|\nu_k\|^2t}-1)}{-\|\nu_k\|^2} \right|$ is uniformly bounded. 
For the case $\beta>n$, consider
\begin{eqnarray*}
	\| u^{\delta, \beta}(\cdot,t)-u(\cdot,t) \|^2_{H^{s+\beta-n}(T^n)}&\!\!\!\!=\!\!\!\!& \sum_{0 \ne k \in \mathbb{Z}^n}(1+ \|k \|^2)^{s+\beta-n}~ | \hat{u}_k^{\delta, \beta}-\hat{u}_k |^2 \\
	&=& \sum_{0 \ne k \in \mathbb{Z}^n}(1+ \|k \|^2)^s(1+ \|k \|^2)^{\beta-n}~ \!\!\left|\frac{(e^{m(\nu_k)t}-1)}{m(\nu_k)}\!-\!\frac{(e^{-\|\nu_k\|^2t}-1)}{-\|\nu_k\|^2} \right|^2 \!\!| \hat{b}_k |^2.
\end{eqnarray*}
Since $(1+ \|k \|^2)^s |\hat{b}_k |^2$ is summable, then 
to pass the limit inside the above sum, we  show that the following  function in $k$ that is given by  
\[
(1+ \|k \|^2)^{\beta-n}~ \left|\frac{(e^{m(\nu_k)t}-1)}{m(\nu_k)}-\frac{(e^{-\|\nu_k\|^2t}-1)}{-\|\nu_k\|^2} \right|^2
\]
is uniformly bounded. 
%in $\delta$. 
Since $m(\nu)\le 0$ for all $\nu\in\Rn$, then
\begin{eqnarray*}
    (1+ \|k \|^2)^{\beta-n}~ \left|\frac{(e^{m(\nu_k)t}-1)}{m(\nu_k)}-\frac{(e^{-\|\nu_k\|^2t}-1)}{-\|\nu_k\|^2} \right|^2
    &\le&(1+ \|k \|^2)^{\beta-n}~ \left(\frac{1}{|m(\nu_k)|}+\frac{1}{\|\nu_k\|^2} \right)^2.
\end{eqnarray*}
By using Theorem \ref{thm:asymptotic_behavior}, there exists constant $C>0$ such that $|m(\nu_k)|>C\|k\|^{\beta-n}$. Furthermore, using the fact that $A\|k\|\le\|\nu_k\|\le B\|k\|$ for positive constants $A$ and $B$ and since $\beta-n\le 2$, then $\|\nu_k\|^2\ge A^2\|k\|^{\beta-n}$. Therefore,
\begin{eqnarray*}
    (1+ \|k \|^2)^{\beta-n}~ \left(\frac{1}{|m(\nu_k)|}+\frac{1}{\|\nu_k\|^2} \right)^2&\le& (1+\|k\|^2)^{\beta-n}\left(\frac{1}{C\|k\|^{\beta-n}}+\frac{1}{A^2\|k\|^{\beta-n}}\right)^2\\[1ex]
    &\le&\max\left({\frac{1}{C^2},\frac{1}{A^4}}\right)\left(\frac{1+\|k\|^2}{\|k\|^2}\right)^{\beta-n},
\end{eqnarray*}
showing uniform boundedness. Whether $\beta\le n$ or $n<\beta\le n+2$, we have $\underset{\delta\to 0^+}{\lim}m(\nu_k)=-\|\nu_k\|^2$, which implies that $\underset{\delta\to 0^+}{\lim}\|u^{\delta,\beta}(\cdot,t)-u(\cdot,t)\|_{H^s(T^n)}=0$, or $\underset{\delta\to 0^+}{\lim}\|u^{\delta,\beta}(\cdot,t)-u(\cdot,t)\|_{H^{s+\beta-n}(T^n)}=0$, respectively, and therefore completing the proof. 
\end{proof}
A proof of the following lemma on the monotonicity of the multipliers can be found in \cite{alali2021fourier}. 
\begin{comment}
\begin{lem}
\label{betabetaprime}
Fixing $\delta>0$ and $\nu \neq 0$, the function $\beta\mapsto m^{\delta,\beta}(\nu)$ is monotonically decreasing for $\beta \in \left(\frac{n+4}{2},n+4\right)$.
\end{lem}
\end{comment}

\begin{lem}\label{betabetaprime}
    Let $\beta'<\beta\le n+2$. Then, for all $\nu\ne 0$, $\mdel(\nu) < m^{\delta,\beta'}(\nu)$.
\end{lem}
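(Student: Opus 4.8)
The plan is to prove the equivalent monotonicity statement that, for each fixed $\nu\ne 0$, the map $\beta\mapsto P(\beta):=-\mdel(\nu)$ is strictly increasing on $(-\infty,n+2]$; since $\mdel=-P$, strict monotonicity of $P$ is exactly the claimed inequality $\mdel(\nu)<m^{\delta,\beta'}(\nu)$ for $\beta'<\beta$. From \eqref{multipliers},
\[
P(\beta)=\cdel\int_{B_\delta(0)}\frac{1-\cos(\nu\cdot z)}{\|z\|^\beta}\,dz,
\]
which is strictly positive for $\nu\ne0$, since the integrand is nonnegative and not identically zero and $\cdel>0$ for $\beta<n+2$. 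Using the scaling identity $\mdel(\nu)=\delta^{-2}m^{1,\beta}(\delta\nu)$ it suffices to treat $\delta=1$, and passing to polar coordinates reduces the problem to the one-dimensional statement that
\[
P(\beta)=K(n+2-\beta)\int_0^1 r^{\,n-1-\beta}\,\Phi(r)\,dr,\qquad \Phi(r):=\int_{S^{n-1}}\bigl(1-\cos(r\,\nu\cdot\omega)\bigr)\,d\omega,
\]
is increasing, where $K=2\Gamma(\tfrac n2+1)/\pi^{n/2}>0$, $\Phi\ge0$, and $\Phi(r)=O(r^2)$ as $r\to0$ (so the integral converges exactly when $\beta<n+2$).

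First I would differentiate under the integral sign (justified by dominated convergence, with a bound uniform for $\beta$ in compact subsets of $(-\infty,n+2)$) to get
\[
P'(\beta)=-K\int_0^1 r^{\,n-1-\beta}\,\Phi(r)\,\bigl[1+(n+2-\beta)\ln r\bigr]\,dr.
\]
The cleanest way to extract a sign is to normalize: let $\mu_\beta$ be the probability measure on $(0,1)$ with density proportional to $r^{\,n-1-\beta}\Phi(r)$. A short computation then shows that $P'(\beta)>0$ is equivalent to
\[
\mathbb{E}_{\mu_\beta}\!\Bigl[\ln\tfrac1r\Bigr]>\frac{1}{\,n+2-\beta\,}.
\]
The key observation is that the right-hand side is itself an expectation, namely $\frac{1}{n+2-\beta}=\mathbb{E}_{\pi_\beta}[\ln(1/r)]$ for the probability measure $\pi_\beta$ with density proportional to $r^{\,n+1-\beta}$. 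Since $\mu_\beta$ is obtained from $\pi_\beta$ by reweighting with $\eta(r):=\Phi(r)/r^2$, the inequality becomes the single correlation statement
\[
\mathrm{Cov}_{\pi_\beta}\!\bigl(\eta(r),\,\ln(1/r)\bigr)>0 .
\]

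The main obstacle is that $\eta$ is \emph{not} monotone: after the angular integration one has $\Phi(r)=|S^{n-1}|\bigl(1-\Lambda(r\|\nu\|)\bigr)$ with $\Lambda$ a normalized Bessel kernel, so $\eta$ oscillates rather than decreasing, and Chebyshev's correlation inequality (which would close the argument instantly if $\eta$ and $\ln(1/r)$ were comonotone) does not apply. I should stress that this oscillation is intrinsic and reappears in every equivalent reformulation I tried---for instance, differentiating the ${}_2F_3$-series of \eqref{eq:multiplier-general} in the parameter $a=\tfrac{n+2-\beta}{2}$ leads to integrating a sign-changing ${}_1F_2$-type kernel against a positive weight---so the result cannot follow from a pointwise sign but must come from genuine cancellation. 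Concretely, I would split $(0,1)$ into a principal region near the origin, where $\eta$ decreases and, heavily weighted by $r^{\,n+1-\beta}$, supplies the dominant correctly-signed contribution, and an oscillatory tail controlled using the decay of $\Lambda$ together with an alternating-series tail estimate. The restriction $\beta\le n+2$, i.e.\ $a\ge0$, is precisely what keeps the near-origin mass dominant; for $\beta>n+2$ this mechanism degenerates, consistent with the lemma being asserted only up to $n+2$.
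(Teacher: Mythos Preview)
The paper does not give its own proof of this lemma; it simply refers to \cite{alali2021fourier}. So there is no in-paper argument to compare against.

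Your reformulation is sound: after scaling to $\delta=1$ and passing to polar coordinates, strict monotonicity of $\beta\mapsto -\mdel(\nu)$ becomes the covariance inequality $\mathrm{Cov}_{\pi_\beta}\bigl(\eta,\ln(1/r)\bigr)>0$ with $\eta(r)=\Phi(r)/r^{2}$, and you correctly diagnose that $\eta$ is not monotone once $\|\nu\|$ is large (already in one dimension $\eta(r)=2(1-\cos(\lambda r))/r^{2}$ satisfies $\eta'>0$ near $\lambda r=5\pi/2$), so Chebyshev's correlation inequality does not close the argument. The genuine gap is that your proof stops exactly here. The proposed remedy---split $(0,1)$ into a monotone principal region and an oscillatory tail ``controlled by alternating-series estimates''---is a plan, not a proof: you supply no cut point, no quantitative tail bound, and no mechanism guaranteeing that the principal contribution dominates \emph{uniformly} in both $\nu\ne 0$ and $\beta\le n+2$. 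In low dimension the oscillations of $\eta$ have amplitude comparable to its typical size, and the weight $\pi_\beta$ redistributes mass as $\beta$ varies, so this domination is the entire content of the lemma, not a routine cleanup. As written, the proposal is an informed outline with the decisive estimate missing.

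One smaller loose end: the integral formula \eqref{multipliers} you start from is only valid for $\beta<n+2$, so the endpoint $\beta=n+2$ in the statement needs a separate continuity argument (the paper effectively provides this elsewhere via the fact that the relevant ${}_2F_3$ is strictly less than $1$ for nonzero argument).
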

\begin{theorem}\label{convergence_NHE_With_diffusionSource2}
Let $n\ge 1, s\in\bbR$ and let $b\in H^s(T^n)$. Suppose $u$ is the solution of the classical diffusion equation $u_t = \Delta u+b$, with $u|_{t=0}=0$, and for any $\beta<n+2$, let $\udel$ be the solution of the nonlocal diffusion equation \eqref{nonlocal_diffusion_eqn2}. Then, for $t>0$ and $0<\epsilon<2$,
\[
    \lim_{\beta\to (n+2)^-}\udel(\cdot,t) = u(\cdot,t), \text{ in } H^{s+2-\epsilon}(T^n).
\]
\end{theorem}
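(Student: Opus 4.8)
The plan is to follow the template of Theorems~\ref{convergence_NHE_Without_diffusionSource2} and~\ref{convergence_NHE_With_diffusionSource1}. Using the Fourier coefficients $\hat{u}_k^{\delta,\beta}=\frac{e^{\mdel(\nu_k)t}-1}{\mdel(\nu_k)}\hat{b}_k$ and $\hat{u}_k=\frac{e^{-\|\nu_k\|^2t}-1}{-\|\nu_k\|^2}\hat{b}_k$ for $k\ne 0$ (the $k=0$ terms both equal $\hat{b}_0 t$ and cancel), I would write
\[
\|\udel(\cdot,t)-u(\cdot,t)\|_{H^{s+2-\epsilon}(T^n)}^2=\sum_{0\ne k\in\Zn}(1+\|k\|^2)^{2-\epsilon}\left|\frac{e^{\mdel(\nu_k)t}-1}{\mdel(\nu_k)}-\frac{e^{-\|\nu_k\|^2t}-1}{-\|\nu_k\|^2}\right|^2(1+\|k\|^2)^s|\hat{b}_k|^2.
\]
Since $(1+\|k\|^2)^s|\hat{b}_k|^2$ is summable, it suffices, by the dominated convergence theorem, to bound the factor $(1+\|k\|^2)^{2-\epsilon}\,|\cdots|^2$ uniformly in $k$ and in $\beta$ on a left neighborhood of $n+2$, and then invoke the pointwise limit $\mdel(\nu_k)\to-\|\nu_k\|^2$ as $\beta\to(n+2)^-$ to conclude the whole sum tends to $0$.

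To produce the uniform bound, I would first use $\mdel(\nu)\le 0$ (immediate from~\eqref{multipliers}), which gives $|e^{\mdel(\nu_k)t}-1|\le 1$ and $|e^{-\|\nu_k\|^2t}-1|\le 1$, hence
\[
\left|\frac{e^{\mdel(\nu_k)t}-1}{\mdel(\nu_k)}-\frac{e^{-\|\nu_k\|^2t}-1}{-\|\nu_k\|^2}\right|\le\frac{1}{|\mdel(\nu_k)|}+\frac{1}{\|\nu_k\|^2}.
\]
Everything then reduces to a lower bound on $|\mdel(\nu_k)|$ that is uniform in $\beta$ near $n+2$.

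This is where Lemma~\ref{betabetaprime} enters. Since the limit is one-sided, I may restrict to $\beta\in(n+2-\epsilon,\,n+2)$ and set $\beta_0=n+2-\epsilon$, which satisfies $n<\beta_0<n+2$ because $0<\epsilon<2$. For such $\beta$, monotonicity gives $\mdel(\nu)<m^{\delta,\beta_0}(\nu)<0$ for $\nu\ne 0$, i.e. $|\mdel(\nu)|\ge|m^{\delta,\beta_0}(\nu)|$, replacing the $\beta$-dependent multiplier by a fixed one. By Theorem~\ref{thm:asymptotic_behavior} (the case $\beta\ne n$), $|m^{\delta,\beta_0}(\nu)|\sim c\,\|\nu\|^{\beta_0-n}=c\,\|\nu\|^{2-\epsilon}$ as $\|\nu\|\to\infty$ with $c>0$, so there is $r>0$ with $|\mdel(\nu_k)|\ge\tfrac{c}{2}\|\nu_k\|^{2-\epsilon}$ for $\|k\|\ge r$. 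Using $A\|k\|\le\|\nu_k\|\le B\|k\|$ and $2-\epsilon<2$, the term $\|\nu_k\|^{-(2-\epsilon)}$ dominates $\|\nu_k\|^{-2}$, so
\[
(1+\|k\|^2)^{2-\epsilon}\left(\frac{1}{|\mdel(\nu_k)|}+\frac{1}{\|\nu_k\|^2}\right)^2\lesssim\frac{(1+\|k\|^2)^{2-\epsilon}}{\|k\|^{2(2-\epsilon)}},
\]
which stays bounded as $\|k\|\to\infty$. For the finitely many $0\ne\|k\|<r$, continuity of $\beta\mapsto\mdel(\nu_k)$ on the compact interval $[\beta_0,n+2]$ together with $\mdel(\nu_k)<0$ yields a uniform positive lower bound on $|\mdel(\nu_k)|$, completing the uniform estimate.

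The main obstacle is exactly the uniformity in $\beta$ of this lower bound: the pointwise limit does not control the tail of the series, and a naive bound would degrade as $\beta\to(n+2)^-$. The monotonicity Lemma~\ref{betabetaprime} is what removes the $\beta$-dependence, reducing the estimate to the fixed multiplier $m^{\delta,\beta_0}$ whose large-$\nu$ growth is $\|\nu\|^{2-\epsilon}$. This also \emph{explains the loss of $\epsilon$}: the comparison multiplier must have exponent $\beta_0-n<2$, so the recovered smoothing is $\|\nu\|^{-(2-\epsilon)}$ rather than the full $\|\nu\|^{-2}$, and the gain of two derivatives is approached but not attained; the hypothesis $0<\epsilon<2$ is precisely what keeps $\beta_0\in(n,n+2)$.
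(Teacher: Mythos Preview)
Your proposal is correct and follows essentially the same route as the paper: write the $H^{s+2-\epsilon}$ norm in Fourier coefficients, bound the difference by $\frac{1}{|\mdel(\nu_k)|}+\frac{1}{\|\nu_k\|^2}$, invoke Lemma~\ref{betabetaprime} with the fixed comparison exponent $\beta_0=n+2-\epsilon$ to get $|\mdel(\nu_k)|\ge|m^{\delta,\beta_0}(\nu_k)|$, and then use the asymptotics of Theorem~\ref{thm:asymptotic_behavior} to obtain the uniform bound. Your treatment is slightly more explicit about the finitely many small $k$ and about why the hypothesis $0<\epsilon<2$ is needed, but the argument is the same.
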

\begin{proof}
For $0<\epsilon< 2$, define $\beta'=n+2-\epsilon$. For any $\beta>\beta'$, we have from Theorem~\ref{U_2spatial_regularity} that $\udel\in H^{s+\beta-n}(T^n)\subset H^{s+2-\epsilon}(T^n)$. Furthermore, $u\in H^{s+2}(T^n)\subset H^{s+2-\epsilon}(T^n)$. Thus the limit makes sense. Consider
\begin{eqnarray*}
    \|\udel(\cdot,t) -u(\cdot,t)\|^2_{H^{s+2-\epsilon}(T^n)} = \sum_{0\ne k\in\Zn}(1+\|k\|^2)^{2-\epsilon}\left|\frac{e^{\mdel(\nu_k)t}-1}{\mdel(\nu_k)}-\frac{e^{-\|\nu_k\|^2t}-1}{-\|\nu_k\|^2}\right|^2(1+\|k\|^2)^s\|\hat{b}_k\|^2.
\end{eqnarray*}
Since $b\in H^s(T^n)$, in order to pass the limit in $\beta$ inside the sum, we show that the expression
\[
    (1+\|k\|^2)^{2-\epsilon}\left|\frac{e^{\mdel(\nu_k)t}-1}{\mdel(\nu_k)}-\frac{e^{-\|\nu_k\|^2t}-1}{-\|\nu_k\|^2}\right|^2
\]
is uniformly bounded for $k\ne 0$ and $\beta\in [\beta',n+2)$. Applying Lemma \ref{betabetaprime},
\begin{eqnarray*}
    (1+\|k\|^2)^{2-\epsilon}\left|\frac{e^{\mdel(\nu_k)t}-1}{\mdel(\nu_k)}-\frac{e^{-\|\nu_k\|^2t}-1}{-\|\nu_k\|^2}\right|^2&\le&(1+\|k\|^2)^{2-\epsilon}\left(\frac{1}{|\mdel(\nu_k)|}+\frac{1}{\|\nu_k\|^2}\right)^2\\
    &\le&(1+\|k\|^2)^{2-\epsilon}\left(\frac{1}{|m^{\delta,\beta'}(\nu_k)|}+\frac{1}{\|\nu_k\|^2}\right)^2.
\end{eqnarray*}
From Theorem~\ref{thm:asymptotic_behavior}, there exists $C>0$ such that $|m^{\delta,\beta'}(\nu_k)|\ge C\|k\|^{2-\epsilon}$. Furthermore, there exists $A>0$ such that $\|\nu_k\|\ge A\|k\|$. Thus,
\begin{eqnarray*}
    (1+\|k\|^2)^{2-\epsilon}\left(\frac{1}{|m^{\delta,\beta'}(\nu_k)|}+\frac{1}{\|\nu_k\|^2}\right)^2&\le& (1+\|k\|^2)^{2-\epsilon}\left(\frac{1}{C\|k\|^{2-\epsilon}}+\frac{1}{A^2\|\nu_k\|^2}\right)^2\\
    &\le& (1+\|k\|^2)^{2-\epsilon}\left(\frac{1}{C\|k\|^{2-\epsilon}}+\frac{1}{A^2\|\nu_k\|^{2-\epsilon}}\right)^2,
\end{eqnarray*}
which is uniformly bounded. Since $\underset{\beta\to (n+2)^-}{\lim}m(\nu_k)=-\|\nu_k\|^2$, then the result follows.
\end{proof}

\section{Propagation of discontinuities for the nonlocal diffusion equation}\label{propagation_of_discontinuities}
In this section, we study the propagation of discontinuities for the nonlocal diffusion equation in \eqref{eq:nonlocal_diffusion_nosource}. 
We emphasize that Theorem~\ref{thm:regularity1} implies that the nonlocal diffusion equation satisfies an  instantaneous smoothing effect when the integral  kernel is singular with $\beta>n$ and a gradual (over time) smoothing effect for when $\beta=n$. However, for  integrable kernels ($\beta<n$), the nonlocal diffusion equation is non-smoothing. In this section, we investigate this latter case further by studying the propagation of discontinuities. To this end,
given a discontinuous initial data $f\in L^2(T^n)$. Then, we  show that for certain conditions on $f$ and $\beta$, discontinuities persist and propagate. In particular, we show that in one-dimension, if $f$ is piecewise continuous, then the solution $u$ is piecewise continuous and both $f$ and $u$ share the same locations of jumps.

%Let $g(t)$ be such that $g(0)=1$. 
To study the propagation of discontinuities, we look for a decomposition of $u$, the solution of \eqref{eq:nonlocal_diffusion_nosource}, of the form
\[
u(x,t)=v(x,t)+g(t) f(x),
\]
for some function $v(x,t)$, which is continuous in $x$ and satisfies $v(x,0)=0$,   and some function $g$ satisfying   $g(0)=1$. This would imply that any discontinuity in $f$ will persist to be a discontinuity in $u$ for all $t>0$. We  show that the magnitude of a jump discontinuity decays as $t$ increases. 

%\begin{eqnarray*}\
%    \begin{cases}
%        v(x,t) = u(x,t)-g(t)f(x)\\
%        v(x,0) = 0
%    \end{cases}.
%\end{eqnarray*}
We observe that $v$ satisfies the following
\begin{eqnarray*}
    v_t&=&u_t-g'(t)f(x)\\
    &=&\Ldel u - g'(t)f(x)\\
    &=&\Ldel v+g(t)\Ldel f(x) - g'(t)f(x).
\end{eqnarray*}
Since $\beta<n$, we observe that 
\[
    \Ldel f(x) = \cdel \int_{B_\delta(x)}\frac{f(y)-f(x)}{\|y-x\|^\beta}dy=h(x) - \alpha f(x),
\]
where 
\begin{equation}
\label{eq:h}
    h(x) = \cdel \int_{B_\delta(x)}\frac{f(y)}{\|y-x\|^\beta}dy=\left(\frac{\cdel}{\|\cdot\|^\beta}\chi_{B_\delta(0)}(\cdot)\right)\ast f(x),
\end{equation}
and $\alpha$ is a constant given by
\begin{equation}
\alpha = \cdel \int_{B_\delta(x)}\frac{1}{\|y-x\|^\beta}dy=\frac{2n(n+2-\beta)}{\delta^2(n-\beta)}.
\label{alpha}
\end{equation}
Therefore,
\begin{eqnarray*}
    v_t&=&\Ldel v +g(t)h(x) - \alpha g(t)f(x)-g'(t)f(x)\\
    &=&\Ldel v +g(t)h(x)-f(x)\left(\alpha g(t)-g'(t)\right).
\end{eqnarray*}
Setting $\alpha g(t)-g'(t)=0$, then $g(t) = e^{-\alpha t}$. Hence, $v$ solves
\begin{eqnarray}
    \begin{cases}
        v_t = \Ldel v +e^{-\alpha t}h(x), \;x\in T^n, t>0,\\
        v(x,0) = 0,
    \end{cases}\label{v-nonlocaldiffusionEq}
\end{eqnarray}
and therefore,
\begin{eqnarray}
    u(x,t) = v(x,t) + e^{-\alpha t}f(x).\label{v(x,t)}
\end{eqnarray}
Hence,
\begin{eqnarray}
    \hat{v}_k &=& \hat{u}_k - e^{-\alpha t}\hat{f}_k\nonumber\\
    &=& \hat{f}_k e^{m(\nu_k)t} - e^{-\alpha t}\hat{f}_k\nonumber\\
    &=& \hat{f}_k\left(1-e^{-(m(\nu_k)+\alpha)t}\right)e^{m(\nu_k)t}.
\end{eqnarray}It remains to find conditions on $f$ and $\beta$ to guarantee the continuity of $v$. Towards this end, we  make use of the following lemma, whose proof is similar to the proof of Theorem 3.2 in \cite{alali2021fourier}.
We note that the constant $\alpha$ appears in the asymptotics formula \eqref{eq:asym_multip}.

\begin{lem}\label{m+alpha}
    Let $\nu \in\Rn$ and let $\beta < n$. Suppose $\alpha$ is as defined in \eqref{alpha}. Then
    \begin{eqnarray*}
        m(\nu)+\alpha\sim \begin{cases}
            4(n+2-\beta)\Gamma\left(\frac{n}{2}+1\right)\delta^2\dfrac{\Gamma\left(\frac{n}{2}\right)\Gamma\left(\frac{n+2-\beta}{2}\right)}{\Gamma\left(\frac{\beta}{2}\right)}\left(\frac{\delta\|\nu\|}{2}\right)^{\beta-n},~\text{ if } \frac{n-1}{2}<\beta<n,\\[4ex]
            4(n+2-\beta)\Gamma\left(\frac{n}{2}+1\right)\delta^2\dfrac{(n-\beta)\Gamma\left(\frac{n}{2}\right)}{4\sqrt{\pi}}\left(\frac{\delta\|\nu\|}{2}\right)^{-\frac{n+1}{2}}, ~\text{ if } \beta\le \frac{n-1}{2}.
        \end{cases}
    \end{eqnarray*}
\end{lem}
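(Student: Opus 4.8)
The plan is to recognize $m(\nu)+\alpha$ as a Fourier cosine transform of a truncated Riesz kernel and to extract its large-$\|\nu\|$ asymptotics; the two cases in the statement arise from a competition between two distinct sources of decay. First I would split the integral \eqref{multipliers} into its $\cos(\nu\cdot z)$ and constant parts. By the definition of $\alpha$ in \eqref{alpha}, the constant part is exactly $-\alpha$, so that
\[
    m(\nu)+\alpha=\cdel\int_{B_\delta(0)}\frac{\cos(\nu\cdot z)}{\|z\|^\beta}\,dz,
\]
which is $\cdel$ times the cosine transform of $\|z\|^{-\beta}\chi_{B_\delta(0)}$. Conceptually this transform decays through two mechanisms: the interior singularity $\|z\|^{-\beta}$ at the origin contributes, by homogeneity, a term of order $\|\nu\|^{\beta-n}$, while the jump of the kernel across the sphere $\|z\|=\delta$ contributes, by stationary phase over $S^{n-1}$, an oscillatory boundary term of order $\|\nu\|^{-(n+1)/2}$. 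The threshold $\beta=\frac{n-1}{2}$ is precisely where these rates coincide, since there $\beta-n=-\frac{n+1}{2}$.

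To make this quantitative I would pass to the radial representation. Since the integrand is radial, the transform reduces to a Bessel integral: with the substitution $u=\|\nu\|r$,
\[
    m(\nu)+\alpha=\cdel\,(2\pi)^{n/2}\,\|\nu\|^{\beta-n}\int_0^{\delta\|\nu\|}u^{\,n/2-\beta}\,J_{n/2-1}(u)\,du,
\]
where $J_{n/2-1}$ comes from integrating $e^{i\nu\cdot z}$ over spheres. Writing $X=\delta\|\nu\|$ and $\mu=\frac{n}{2}-\beta$, the whole problem becomes the large-$X$ behavior of $A(X)=\int_0^{X}u^{\mu}J_{n/2-1}(u)\,du$, and the dichotomy is governed by comparing $\mu$ with the Bessel decay rate $u^{-1/2}$.

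For $\frac{n-1}{2}<\beta<n$ one has $\mu<\frac12$, so the improper integral $\int_0^\infty u^{\mu}J_{n/2-1}(u)\,du$ converges, to the Weber--Schafheitlin value $2^{\mu}\Gamma(\frac{n-\beta}{2})/\Gamma(\frac{\beta}{2})$; hence $A(X)$ tends to this value, and multiplying by $\cdel(2\pi)^{n/2}\|\nu\|^{\beta-n}$ and simplifying $\cdel$ gives the first case, which I would cross-check against the leading correction already recorded in \eqref{eq:asym_multip}. For $\beta\le\frac{n-1}{2}$ one has $\mu\ge\frac12$, the improper integral diverges, and the leading behavior of $A(X)$ comes from its upper endpoint: inserting the large-argument expansion $J_{n/2-1}(u)\sim\sqrt{2/(\pi u)}\,\cos\!\big(u-\tfrac{(n-1)\pi}{4}\big)$ and integrating by parts once isolates a boundary term of order $X^{\mu-1/2}$, which after multiplication by $\|\nu\|^{\beta-n}$ yields the rate $\|\nu\|^{-(n+1)/2}$, the $\sqrt{\pi}$ in the stated constant originating from the Bessel normalization. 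Alternatively, one may carry out the entire computation inside the hypergeometric representation \eqref{eq:multiplier-general}, exactly as in the proof of Theorem~\ref{thm:asymptotic_behavior}, separating the algebraic asymptotic series of ${}_2F_3$, whose constant term cancels $\alpha$, from its oscillatory part of magnitude $\|\nu\|^{-(n+1)/2}$.

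The main obstacle is the regime $\beta\le\frac{n-1}{2}$, where $A(X)$ is only conditionally convergent, so there is no closed-form limit to invoke as in the first case; instead I must track the endpoint contribution carefully, justify that the terms produced by successive integrations by parts are genuinely of lower order uniformly in $\|\nu\|$, and recover the exact constant. A secondary difficulty is matching the $\Gamma$-factor constants after clearing $\cdel$ and the $(2\pi)^{n/2}$ factor, which I would confirm by checking that the two expressions agree in order of magnitude at the boundary $\beta=\frac{n-1}{2}$. Finally, I would note that the endpoint term in the second case is genuinely oscillatory, so the symbol $\sim$ there is to be read as describing the leading order of magnitude, i.e.\ the decay rate, which is exactly what the continuity argument later in this section requires.
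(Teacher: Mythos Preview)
Your proposal is correct, and you explicitly include the paper's own route as your alternative: the paper does not give a detailed argument but simply says the proof is ``similar to the proof of Theorem~3.2 in \cite{alali2021fourier}'', i.e.\ the ${}_2F_3$ asymptotic expansion underlying Theorem~\ref{thm:asymptotic_behavior}, which you mention at the end of your sketch.

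Your primary approach, however, is genuinely different and worth noting. By writing $m(\nu)+\alpha$ directly as the truncated Hankel integral $\cdel(2\pi)^{n/2}\|\nu\|^{\beta-n}\int_0^{\delta\|\nu\|}u^{n/2-\beta}J_{n/2-1}(u)\,du$, the cancellation of the constant $\alpha$ is built into the integrand, and the dichotomy at $\beta=\tfrac{n-1}{2}$ becomes the classical convergence threshold for Weber--Schafheitlin integrals; the first case is then a one-line evaluation, and the second reduces to a standard endpoint/stationary-phase estimate. The hypergeometric route, by contrast, requires carrying the full large-argument expansion of ${}_2F_3$---both its algebraic series, whose constant term is $\alpha$, and its separate oscillatory part of magnitude $\|\nu\|^{-(n+1)/2}$---and then comparing the two surviving pieces after cancellation. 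Your Bessel approach is more self-contained for this specific quantity and makes the geometric source of each term (origin singularity versus sphere boundary) explicit; the paper's approach has the advantage of reusing machinery already in place. Your closing remark that the second case is genuinely oscillatory, so that $\sim$ should be read as an order-of-magnitude statement, is apt and matches how the lemma is actually consumed in Lemma~\ref{v_hat_asympt} and Theorem~\ref{continuity_of_v}.
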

In addition, we  make use of the following lemma.
\begin{lem}\label{v_hat_asympt}
    Let $\alpha$ be as defined in \eqref{alpha}.
    Then,
    \begin{eqnarray*}
        \left(1-e^{-(m(\nu_k)+\alpha)t}\right)e^{m(\nu_k)t}\sim
        \begin{cases}
            \dfrac{C_1te^{-\alpha t}}{\|k\|^{n-\beta}},~\text{ if } \frac{n-1}{2}<\beta<n,\\[3ex]
            \dfrac{C_2te^{-\alpha t}}{\|k\|^{\frac{n+1}{2}}},~\text{ if } \beta\le \frac{n-1}{2},
        \end{cases}
    \end{eqnarray*}
    for some  positive constants $C_1$ and $C_2$.
\end{lem}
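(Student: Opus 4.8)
The plan is to reduce the stated asymptotics to the elementary limit $\lim_{y\to 0}(1-e^{-y})/y = 1$ together with the behavior of $m(\nu_k)+\alpha$ already recorded in Lemma~\ref{m+alpha}. The starting observation is that, by Theorem~\ref{thm:asymptotic_behavior}, for $\beta<n$ the constant term in the large-$\|\nu\|$ expansion of $m(\nu)$ is exactly $-\alpha$ (this is precisely the constant in \eqref{alpha}); hence $m(\nu_k)\to -\alpha$ and $m(\nu_k)+\alpha\to 0$ as $\|k\|\to\infty$. Writing $y_k := (m(\nu_k)+\alpha)t$, I therefore have $y_k\to 0$, which is the feature that drives the whole argument.

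First I would factor the quantity of interest as
\[
\left(1-e^{-(m(\nu_k)+\alpha)t}\right)e^{m(\nu_k)t} = \left(1-e^{-y_k}\right)\,e^{-\alpha t}\,e^{y_k}.
\]
Since $y_k\to 0$, the factor $e^{y_k}\to 1$, so $e^{m(\nu_k)t}\to e^{-\alpha t}$, while $1-e^{-y_k}\sim y_k$. Using that a product of asymptotic equivalences is again an asymptotic equivalence (all factors being nonzero for large $\|k\|$), this yields the key reduction
\[
\left(1-e^{-(m(\nu_k)+\alpha)t}\right)e^{m(\nu_k)t}\sim e^{-\alpha t}\,y_k = t\,e^{-\alpha t}\,(m(\nu_k)+\alpha).
\]
Thus the expression inherits, up to the fixed factor $t\,e^{-\alpha t}$, the asymptotics of $m(\nu_k)+\alpha$.

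Next I would insert the two regimes of Lemma~\ref{m+alpha}. For $\frac{n-1}{2}<\beta<n$ one has $m(\nu_k)+\alpha\sim c\,\|\nu_k\|^{\beta-n}$ with an explicit positive constant $c$, while for $\beta\le\frac{n-1}{2}$ one has $m(\nu_k)+\alpha\sim c'\,\|\nu_k\|^{-\frac{n+1}{2}}$ with an explicit positive $c'$; in both cases the leading coefficient is positive (every gamma factor and every factor $n-\beta$, $n+2-\beta$ is positive for $\beta<n$), which is exactly what forces $C_1,C_2>0$. Substituting and replacing $\|\nu_k\|$ by $\|k\|$ then produces the two claimed rates $\|k\|^{-(n-\beta)}$ and $\|k\|^{-\frac{n+1}{2}}$.

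The one point that needs care — and the only genuine obstacle — is the passage from $\|\nu_k\|$ to $\|k\|$. Because $\nu_k=(2\pi k_1/r_1,\dots,2\pi k_n/r_n)$, the ratio $\|\nu_k\|/\|k\|$ is constant only when the torus is a cube ($r_1=\cdots=r_n$); for a general rectangular torus one only has $A\|k\|\le\|\nu_k\|\le B\|k\|$. In the cubic case the substitution is exact and the proportionality constant is absorbed into $C_1,C_2$. For a general torus the clean single-constant $\sim$ should be read as the corresponding two-sided comparison, which preserves the decay exponents $n-\beta$ and $\frac{n+1}{2}$ in $\|k\|$; these exponents are all that is needed downstream to establish summability of the Fourier coefficients $\hat v_k$ and hence the continuity of $v$. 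I would state this convention explicitly before invoking Lemma~\ref{m+alpha}, or else restrict to the cubic torus.
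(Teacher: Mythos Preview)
Your proof is correct and follows essentially the same route as the paper's: both reduce the expression to the asymptotics of $m(\nu_k)+\alpha$ from Lemma~\ref{m+alpha} via the elementary fact $1-e^{-y}\sim y$ as $y\to 0$ (the paper phrases this as an explicit $\epsilon$-sandwich), combined with $e^{m(\nu_k)t}\to e^{-\alpha t}$. Your remark on the $\|\nu_k\|$ versus $\|k\|$ passage for non-cubic tori is in fact more careful than the paper, which simply asserts the existence of a single limiting constant $C_1$ without addressing this point.
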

\begin{proof}
    When $\frac{n-1}{2}<\beta<n$, then by using Lemma~\ref{m+alpha} and the definition of $\nu_k$, there exists $C_1>0$ such that
    \[
        \lim_{\|k\|\to\infty} (m(\nu_k)+\alpha)\|k\|^{n-\beta} = C_1,
    \]
    which implies that
    \[
        \frac{C_1(1-\epsilon)}{\|k\|^{n-\beta}} < m(\nu_k) + \alpha < \frac{C_1(1+\epsilon)}{\|k\|^{n-\beta}},
    \]
    for any $\epsilon>0$. Thus
    \[
        \|k\|^{n-\beta} \left(1-e^{-\frac{C_1(1-\epsilon)t}{\|k\|^{n-\beta}}}\right)<\|k\|^{n-\beta}\left(1-e^{-(m(\nu_k)+\alpha)t}\right)<\|k\|^{n-\beta} \left(1-e^{-\frac{C_1(1+\epsilon)t}{\|k\|^{n-\beta}}}\right),
    \]
    and consequently,
    \[
        C_1(1-\epsilon)t<\lim_{\|k\|\to\infty}\|k\|^{n-\beta}\left(1-e^{-(m(\nu_k)+\alpha)t}\right)<C_1(1+\epsilon)t.  
    \]
   Since $\epsilon$ is arbitrary, we obtain 
%    by using L'Hopital's rule and Squeeze theorem and letting $\epsilon \to 0$, we have
    \[
        \lim_{\|k\|\to\infty} \|k\|^{n-\beta}\left(1-e^{-(m(\nu_k)+\alpha)t}\right) = C_1 t,
    \]
    and thus,
    \[
         \left(1-e^{-(m(\nu_k)+\alpha)t}\right)e^{m(\nu_k)t} \sim \frac{C_1 t  e^{-\alpha t}}{\|k\|^{n-\beta}}.
    \]
    The proof is similar for the case $\beta \le \frac{n-1}{2}$.
\end{proof}
\begin{figure}[ht]
  \centering
  \begin{subfigure}[t]{0.24\textwidth}
    \includegraphics[width=\textwidth]{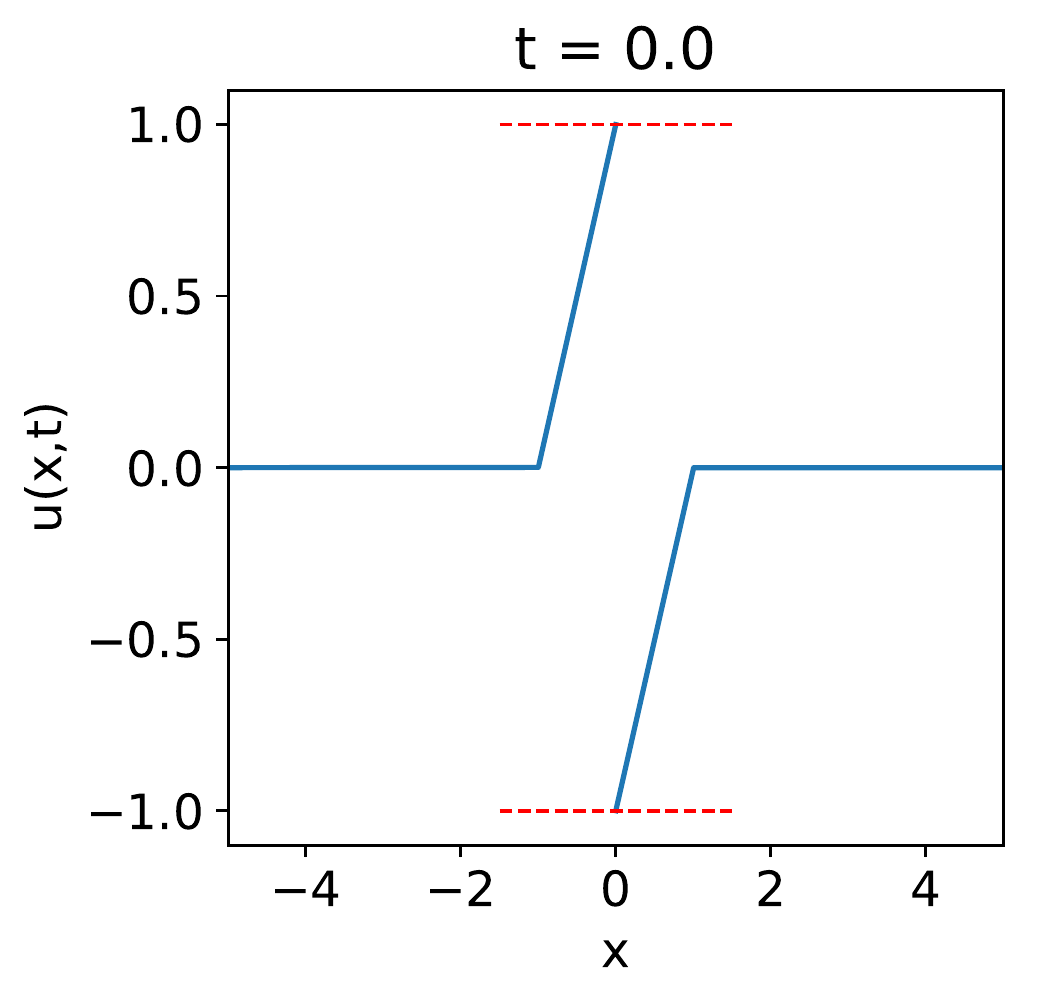}
  \end{subfigure}%
  \begin{subfigure}[t]{0.24\textwidth}
    \includegraphics[width=\textwidth]{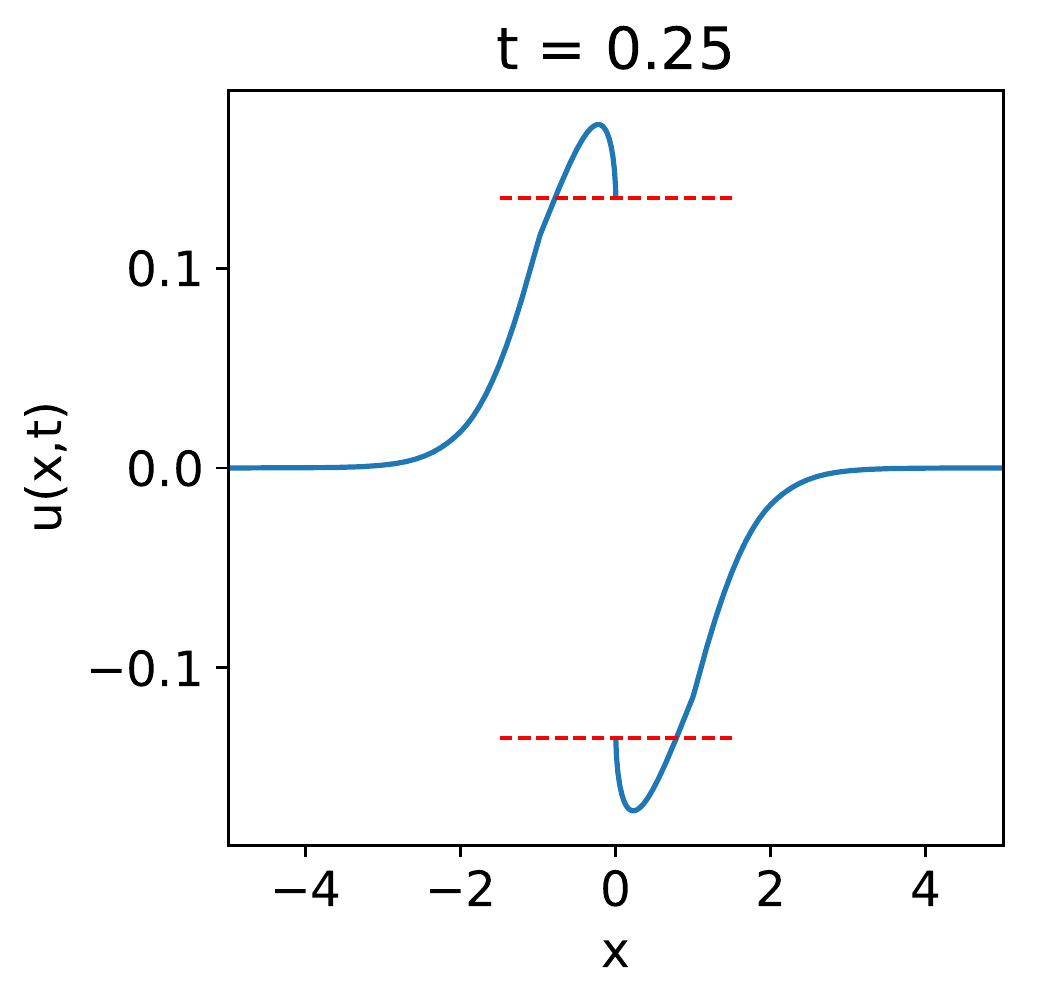}
  \end{subfigure}%
  \begin{subfigure}[t]{0.24\textwidth}
    \includegraphics[width=\textwidth]{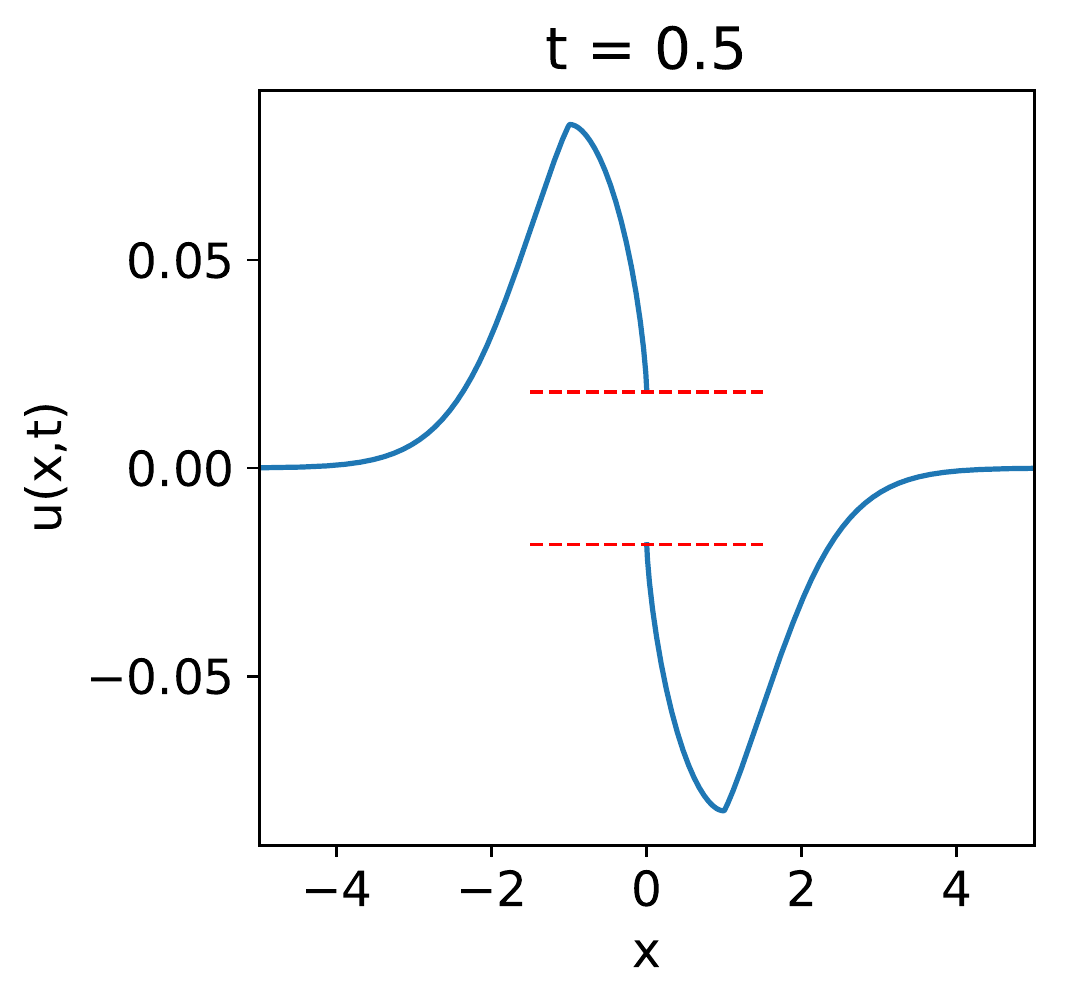}
  \end{subfigure}%
  \begin{subfigure}[t]{0.24\textwidth}
    \includegraphics[width=\textwidth]{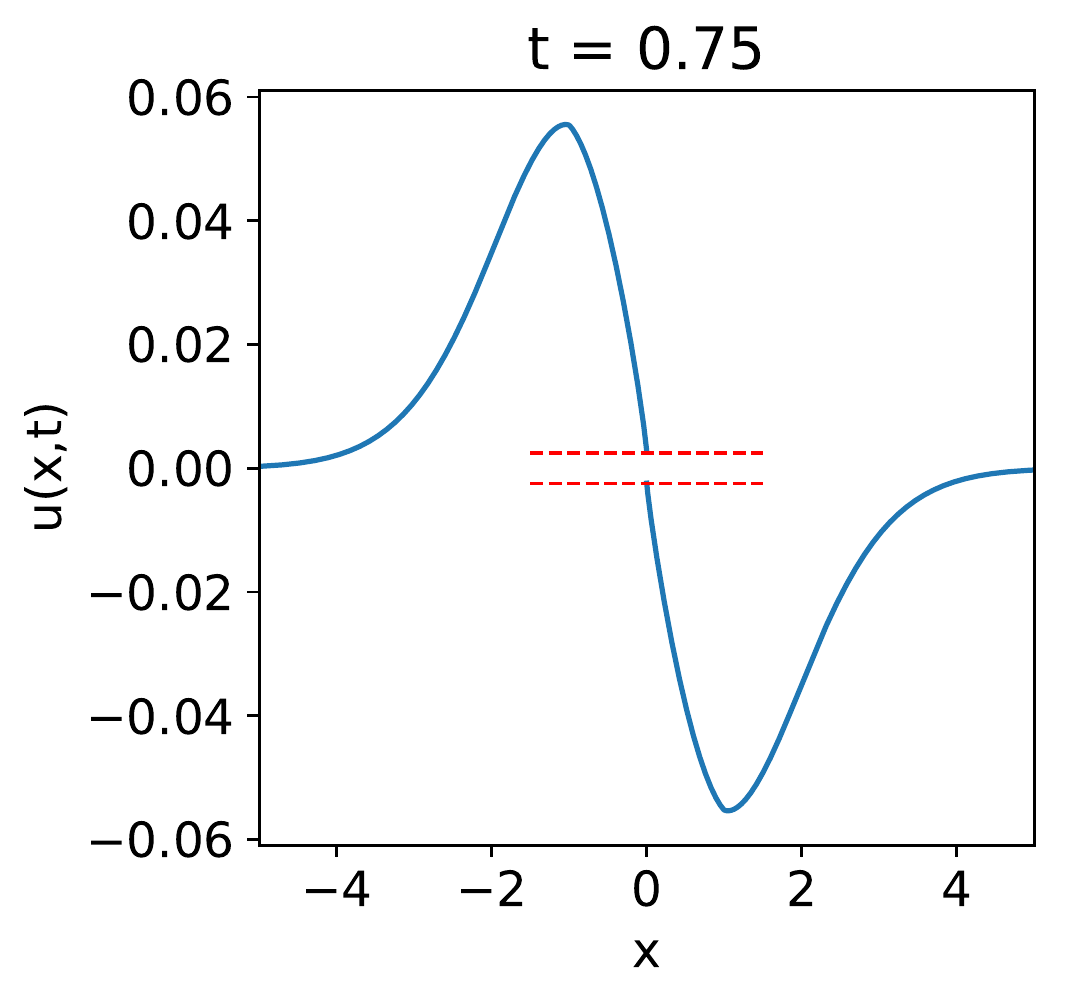}
  \end{subfigure}
  \caption{The slow decay of a discontinuity in the nonlocal diffusion
    equation with $\beta<n$.} 
    %The dashed lines indicate the values $\pm e^{-\alpha t}$, showing the theoretical extremes of the jump.}
  \label{fig:jump-decay}
\end{figure}

Conditions on $f$ and $\beta$ to guarantee the continuity of $v$ are given in the following result.
\begin{theorem}\label{continuity_of_v}
   Let $v(x,t)$ be as in  \eqref{v(x,t)} and assume that $\hat{f}_k$ satisfies
   \[
        \hat{f}_k \sim
        \begin{cases}
            \frac{1}{\|k\|^{\beta+\epsilon}}, \text{ if } \frac{n-1}{2}<\beta<n,\\
            \frac{1}{\|k\|^{\frac{n-1}{2}+\zeta}}, \text{ if } \beta\le\frac{n-1}{2},
        \end{cases}
   \]
    for $\epsilon, \zeta>0$. Then, $v(x,t)$ is continuous.
\end{theorem}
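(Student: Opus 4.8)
The plan is to establish that, for each fixed $t>0$, the Fourier coefficients $\hat{v}_k$ are absolutely summable over $k\in\Zn$; once this is shown, the Fourier series $v(x,t)=\sum_k \hat{v}_k e^{i\nu_k\cdot x}$ converges uniformly in $x$ by the Weierstrass $M$-test, and as a uniform limit of the continuous exponentials $e^{i\nu_k\cdot x}$, the function $v(\cdot,t)$ is continuous. (At $t=0$ continuity is immediate since $v(\cdot,0)\equiv 0$.) Thus the whole argument reduces to a decay estimate on $|\hat{v}_k|$ as $\|k\|\to\infty$.

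First I would recall the exact expression $\hat{v}_k=\hat{f}_k\bigl(1-e^{-(m(\nu_k)+\alpha)t}\bigr)e^{m(\nu_k)t}$ and combine the assumed asymptotics of $\hat{f}_k$ with the asymptotics of the factor $\bigl(1-e^{-(m(\nu_k)+\alpha)t}\bigr)e^{m(\nu_k)t}$ supplied by Lemma~\ref{v_hat_asympt}. In the range $\frac{n-1}{2}<\beta<n$, multiplying $\hat{f}_k\sim\|k\|^{-(\beta+\epsilon)}$ by the factor $\sim C_1 t e^{-\alpha t}\|k\|^{-(n-\beta)}$ yields $\hat{v}_k\sim C_1 t e^{-\alpha t}\|k\|^{-(n+\epsilon)}$, since the exponents $\beta+\epsilon$ and $n-\beta$ add to $n+\epsilon$. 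In the range $\beta\le\frac{n-1}{2}$, multiplying $\hat{f}_k\sim\|k\|^{-(\frac{n-1}{2}+\zeta)}$ by the factor $\sim C_2 t e^{-\alpha t}\|k\|^{-\frac{n+1}{2}}$ yields $\hat{v}_k\sim C_2 t e^{-\alpha t}\|k\|^{-(n+\zeta)}$, since $\frac{n-1}{2}+\zeta$ and $\frac{n+1}{2}$ add to $n+\zeta$. The whole point of the two-case split in the hypothesis is precisely to make these exponents combine to $n+\epsilon$ and $n+\zeta$, respectively.

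Finally I would invoke the standard criterion that $\sum_{0\ne k\in\Zn}\|k\|^{-p}<\infty$ exactly when $p>n$. In both cases the decay exponent of $\hat{v}_k$ is strictly larger than $n$ (namely $n+\epsilon$ or $n+\zeta$, with $\epsilon,\zeta>0$), so $\sum_k|\hat{v}_k|$ converges, completing the argument. The only technical care needed is to convert the asymptotic equivalences $\hat{v}_k\sim C\|k\|^{-(n+\epsilon)}$ into genuine upper bounds $|\hat{v}_k|\le C'\|k\|^{-(n+\epsilon)}$ valid for all sufficiently large $\|k\|$, so that the comparison with a convergent $p$-series applies; the finitely many remaining small-$\|k\|$ terms (including $k=0$, where $m(\nu_0)=0$ and $\hat{v}_0=\hat{f}_0(1-e^{-\alpha t})$ is finite) are automatically summable. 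I do not anticipate a substantial obstacle here, as Lemma~\ref{v_hat_asympt} already isolates the delicate part of the estimate; the remaining work is the bookkeeping of exponents and the appeal to the $p$-series test.
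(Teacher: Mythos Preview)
Your proposal is correct and follows essentially the same route as the paper: both combine the assumed asymptotics on $\hat{f}_k$ with Lemma~\ref{v_hat_asympt} to obtain $\hat{v}_k\sim C t e^{-\alpha t}\|k\|^{-(n+\epsilon)}$ (respectively $\|k\|^{-(n+\zeta)}$), and then conclude continuity from this decay. The only cosmetic difference is that where you spell out the argument via absolute summability and the Weierstrass $M$-test, the paper simply cites Proposition~3.3.12 in Grafakos, which packages exactly that implication.
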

\begin{proof}
    For $\frac{n-1}{2} < \beta < n$ with $\hat{f}_k\sim \frac{1}{\|k\|^{\beta+\epsilon}}$, then by using Lemma~\ref{v_hat_asympt}, we have
    \begin{eqnarray*}
        \hat{v}_k &=& \hat{f}_k\left(1-e^{-(m(\nu_k)+\alpha)t}\right)e^{m(\nu_k)t}\sim \dfrac{Cte^{-\alpha t}}{\|k\|^{n+\epsilon}}.
        %\\
%        &\sim& \dfrac{Cte^{-\alpha t}}{\|k\|^{n+\epsilon}}.
    \end{eqnarray*}
    Similarly, for $\beta \le \frac{n-1}{2}$ with $\hat{f}_k\sim \frac{1}{\|k\|^{\frac{n-1}{2}+\zeta}}$, then
    \begin{eqnarray*}
        \hat{v}_k &=& \hat{f}_k\left(1-e^{-(m(\nu_k)+\alpha)t}\right)e^{m(\nu_k)t}
        \sim \dfrac{Cte^{-\alpha t}}{\|k\|^{n+\zeta}}.
    \end{eqnarray*}
    By Proposition~3.3.12 in \cite{grafakos2008classical}, we conclude that for $t>0$, $v(\cdot,t)$ is continuous in both cases.
\end{proof}
The following theorem summarizes the results in this section.
\begin{theorem}\label{u_f_discontinuity}
    Let $\beta<n$ and let $u$ be as given in \eqref{v(x,t)}
    and assume that
    \[
        \hat{f}_k \sim
        \begin{cases}
            \frac{1}{\|k\|^{\beta+\epsilon}}, \text{ if } \frac{n-1}{2}<\beta<n,\\
            \frac{1}{\|k\|^{\frac{n-1}{2}+\zeta}}, \text{ if } \beta\le\frac{n-1}{2},
        \end{cases}
   \]
    for some $\epsilon, \zeta>0$.  Then, if $f$ is discontinuous at $x$ then $u$ is discontinuous at $x$.
\end{theorem}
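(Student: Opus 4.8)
The plan is to recognize this theorem as a direct corollary of the decomposition \eqref{v(x,t)} together with the continuity result of Theorem~\ref{continuity_of_v}. First I would recall that, since $\beta<n$, the solution admits the splitting $u(x,t)=v(x,t)+e^{-\alpha t}f(x)$, where $\alpha$ is the positive constant in \eqref{alpha}. The hypotheses on the decay of $\hat{f}_k$ are exactly those of Theorem~\ref{continuity_of_v}, so that theorem applies verbatim and guarantees that $v(\cdot,t)$ is continuous on $T^n$ for every fixed $t>0$. Thus the entire analytic content needed here has already been established.

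The second step is to transfer the discontinuity of $f$ to $u$ by a short contradiction argument. Solving the decomposition for $f$ gives $f(x)=e^{\alpha t}\bigl(u(x,t)-v(x,t)\bigr)$. Fix $t>0$ and suppose, for contradiction, that $u(\cdot,t)$ is continuous at the point $x$ where $f$ is discontinuous. Since $v(\cdot,t)$ is continuous everywhere, the difference $u(\cdot,t)-v(\cdot,t)$ would be continuous at $x$, and multiplying by the fixed nonzero scalar $e^{\alpha t}$ preserves continuity; hence $f$ would be continuous at $x$, contradicting the hypothesis. Therefore $u(\cdot,t)$ is discontinuous at $x$, as claimed.

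There is essentially no analytic obstacle at this stage: the theorem merely packages the harder results of the section, namely Lemma~\ref{m+alpha}, Lemma~\ref{v_hat_asympt}, and Theorem~\ref{continuity_of_v}, which did the actual work of establishing the continuity of $v$ from the asymptotics of $m(\nu_k)+\alpha$. The only points that require care are bookkeeping. I would verify that $\alpha>0$, which follows from $\beta<n$ since both $n+2-\beta$ and $n-\beta$ are then positive in \eqref{alpha}, and note that $\alpha$ is independent of the spatial variable, so that $e^{-\alpha t}f$ carries a genuine jump wherever $f$ does. I would also observe, as an immediate consequence, that the jump of $u(\cdot,t)$ at $x$ equals $e^{-\alpha t}$ times the jump of $f$ at $x$ (the $v$-part contributing nothing by continuity), which both yields the one-dimensional statement that $u$ is piecewise continuous with jumps at exactly the locations of $f$ and exhibits the claimed decay of the jump magnitude as $t$ increases.
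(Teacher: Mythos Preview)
Your proposal is correct and matches the paper's approach: the paper presents this theorem as a summary of the section's results and gives no separate proof, treating it as immediate from the decomposition \eqref{v(x,t)} and Theorem~\ref{continuity_of_v}. Your contradiction argument and the bookkeeping remarks (positivity of $\alpha$, the jump magnitude being $e^{-\alpha t}$ times that of $f$) are exactly the right way to spell out this immediate consequence.
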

%An immediate consequence of this result for the one-dimensional case is as follows.
\begin{cor}
    If %$f:T\to\bbR$ 
    $f\in L^2(T)$ is piecewise continuous, then $u$ is piecewise continuous and $f$ and $u$ share the same locations of jumps. Furthermore, the magnitude of a jump decays as $t$ increases.
\end{cor}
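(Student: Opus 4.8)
The plan is to derive all three assertions from the decomposition \eqref{v(x,t)}, namely $u(x,t)=v(x,t)+e^{-\alpha t}f(x)$, by reducing them to the continuity of the single function $v(\cdot,t)$. First I would record the Fourier decay of a piecewise continuous $f$ on $T=T^1$: integrating by parts over each interval of continuity yields $\hat f_k=O(1/|k|)$, the leading $1/k$ term collecting the jumps of $f$. In one dimension ($n=1$) this decay rate matches the hypotheses of Theorem~\ref{u_f_discontinuity} and Theorem~\ref{continuity_of_v}, since $\beta+\epsilon=1$ (i.e.\ $\epsilon=1-\beta>0$) when $0<\beta<1$, and $\tfrac{n-1}{2}+\zeta=1$ (i.e.\ $\zeta=1>0$) when $\beta\le 0$.

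The crux is confirming the continuity of $v(\cdot,t)$ for $t>0$. Rather than insist on a clean asymptotic equivalence for $\hat f_k$, it is enough to combine the upper bound $\hat f_k=O(1/|k|)$ with Lemma~\ref{v_hat_asympt}, which shows the multiplier factor $(1-e^{-(m(\nu_k)+\alpha)t})e^{m(\nu_k)t}$ decays like $\|k\|^{-(n-\beta)}$ for $\tfrac{n-1}{2}<\beta<n$ and like $\|k\|^{-(n+1)/2}$ for $\beta\le\tfrac{n-1}{2}$. Consequently, in one dimension $\hat v_k=O(\|k\|^{-(2-\beta)})$ or $O(\|k\|^{-2})$, respectively, and in both cases $\sum_k|\hat v_k|<\infty$ because $\beta<1$. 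By the absolute convergence criterion (Proposition~3.3.12 in \cite{grafakos2008classical}), $v(\cdot,t)$ is continuous on $T$ for every $t>0$.

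Finally I would read off the conclusions from $u=v+e^{-\alpha t}f$. Since $v(\cdot,t)$ is continuous and $f$ is piecewise continuous, $u(\cdot,t)$ is piecewise continuous; at points where $f$ is continuous so is $u$, while at a jump point $x_0$ the factor $e^{-\alpha t}\ne0$ forces $u$ to inherit the jump of $f$ (this is precisely Theorem~\ref{u_f_discontinuity}), so $f$ and $u$ have identical jump locations. Using continuity of $v$, the jump of $u$ at $x_0$ is
\[
[u](x_0,t)=e^{-\alpha t}\,[f](x_0),
\]
whose magnitude $e^{-\alpha t}\,|[f](x_0)|$ decays exponentially in $t$ because $\alpha=\frac{2n(n+2-\beta)}{\delta^2(n-\beta)}>0$ for $\beta<n$. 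I expect the main obstacle to be exactly the second step: making sure that the generic $O(1/|k|)$ decay of a piecewise continuous function---without a precise asymptotic equivalence---already guarantees absolute summability of $\hat v_k$, which is what Lemma~\ref{v_hat_asympt} provides once $n=1$ and $\beta<1$ are in force.
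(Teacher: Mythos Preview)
Your proposal is correct and follows essentially the same route as the paper: observe that a piecewise continuous $f\in L^2(T)$ has Fourier coefficients decaying like $1/|k|$, feed this into Theorem~\ref{continuity_of_v}/Theorem~\ref{u_f_discontinuity} (or, as you do, directly into Lemma~\ref{v_hat_asympt}) to get continuity of $v(\cdot,t)$, and read off the conclusions from $u=v+e^{-\alpha t}f$. Your treatment is in fact slightly more careful than the paper's one-line justification, since you work with the upper bound $\hat f_k=O(1/|k|)$ rather than the asymptotic equivalence $\hat f_k\sim C/|k|$ (which need not hold for a general piecewise continuous $f$ with multiple jumps), and you make the jump computation $[u](x_0,t)=e^{-\alpha t}[f](x_0)$ explicit.
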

This is an immediate consequence of Theorem~\ref{u_f_discontinuity}, since for a piecewise continuous function $f\in L^2(T)$, we have $\hat{f}_k\sim\dfrac{C}{|k|}$, for some $C>0$.
\begin{comment}
\begin{proof}
    Since $f\in L^2(T)$ is piecewise continuous, then $\hat{f}_k\sim\dfrac{C}{|k|}$, for some $C>0$. Thus, if $0<\beta<1$, then by putting $n=1$ in Lemma~\ref{v_hat_asympt}, we have that
    \begin{eqnarray*}
        \hat{v}_k = \hat{f}_k\left(1-e^{-(m(\nu_k)+\alpha)t}\right)e^{m(\nu_k)t}
        \sim\frac{1}{|k|^{2-\beta}}
    \end{eqnarray*}
    showing that $v$ is continuous since $1-\beta>0$. Furthermore, if $\beta\le 0$, then Lemma~\ref{v_hat_asympt} implies that
    \begin{eqnarray*}
        \hat{v}_k = \hat{f}_k\left(1-e^{-(m(\nu_k)+\alpha)t}\right)e^{m(\nu_k)t}
        \sim\frac{1}{|k|^{1+\zeta}}
    \end{eqnarray*}
    showing thay $v$ is continuous since $\zeta>0$. By definition
    \[
        u(x,t) = v(x,t) + e^{-\alpha t}f(x).
    \]
    Since $v$ is continuous and $f$ is piecewise continuous, then $u$ is piecewise continuous and in fact, the magnitude of the jump discontinuity decays as $t$ increases.
\end{proof}
\end{comment}

A one-dimensional example for the propagation of a discontinuity in the nonlocal diffusion equation is described below.  
Figure~\ref{fig:jump-decay} shows the results of a numerical solution
to the periodic nonlocal diffusion problem $u_t=\Ldel u$ on the interval $(-10,10)$
with $\delta = 1$, $\beta = 1/3$, and initial condition
\begin{equation*}
  u(x,0) =
  \begin{cases}
    x+1 & \text{if }-1<x\le 0,\\
    x-1 & \text{if }0<x<1,\\
    0 & \text{otherwise}.
  \end{cases}
\end{equation*}

In Figure~\ref{fig:jump-decay}, function values for $x<0$ were plotted separately from those for $x>0$ so that the jump is apparent.  The dashed lines indicate the values $\pm e^{-\alpha t}$, showing the theoretical extremes of the jump.

\clearpage
\bibliographystyle{acm}
\bibliography{refs}

\end{document}